 \documentclass[draft]{article}

\usepackage{amsmath,amsfonts,amsthm,amssymb,amscd,cancel,color,mathabx}
\usepackage{enumitem}
\usepackage{verbatim}
\usepackage[dvipdfmx]{graphicx}
\usepackage{ulem,color}

\usepackage{cleveref}

\renewcommand{\Re}{\mathop{\rm Re}\nolimits}

\def\S{\mathhexbox278}

\theoremstyle{plain}
\newtheorem{theorem}{Theorem}[section]
\newtheorem{lemma}[theorem]{Lemma}
\newtheorem{proposition}[theorem]{Proposition}

\theoremstyle{definition}

\theoremstyle{remark}

\newtheorem{notation}[theorem]{Notation}

\newcommand{\R}{{\mathbb R}}

\def\im{{\rm i}}

\newcommand{\C}{\mathbb{C}}

\def\({\left(}
\def\){\right)}
\def\<{\left\langle}
\def\>{\right\rangle}
\newcommand{\sech}{{\mathrm{sech}}}

\newcommand{\supp}{{\mathrm{supp}\ }}
    
\newcommand{\Span}{{\mathrm{Span}}}

\makeatletter

\numberwithin{equation}{section}

\begin{document}

\title{The virial--Kato method and the asymptotic stability of solitons of  the  generalized Benjamin, Bona and Mahony   equation }

\author{Scipio Cuccagna, Daniele Del Santo  and Masaya Maeda   }
\maketitle

\begin{abstract}  We apply to the    generalized  pure power Benjamin, Bona and Mahony   equation our method of combining high energy virial estimates with Kato smoothing, which closes the virial estimates in a natural way and allows us to prove a result of asymptotic stability for all the stable solitons with $c>1$ for all the exponents $p>1$.
\end{abstract}

\section{Introduction}

We consider on the line $\R $  the  generalized  Benjamin, Bona and Mahony  (gBBM)     equation,  also called generalized Regularized Long-Wave equation, 
\begin{align}\label{eq:BBM1}&
  (1   -\partial _x ^2)    \partial _t     u          +  \partial_x   \left (    u +  f(u) \right ) =0 \text{ where $f(u)=|u|^{p-1}u$   or $f(u)=|u|^{p } $,}
\end{align}
which,  for    $R_A(z) =(A-z)^{-1}$   the resolvent of an operator $A$,    we can write equivalently 
\begin{align}\label{eq:BBM2}&
    \partial _t     u          =-  \partial_x  R_{-\partial _x ^2} (-1)\left (    u +  f(u) \right )  .
\end{align}
  This equation was introduced  by  Peregrine  \cite{Peregrine1966,Peregrine1967}
and by   Benjamin et al. \cite{BBM1972}, as an alternative  to the Korteweg--de Vries equation.  
There are  invariants 
\begin{align}
   \label{eq:energy}E( {u}) &=      \int _{\R } \left (   \frac{u^2}2 +F(u) \right )      dx  \, , \text{ for $F'=f$ and $F(0)=0$} \\ \label{eq:mass}M( {u}) &=  2^{-1} \<  (1- \partial _x ^2)  u , u \>       ,
\end{align}
as is formally shown,  \eqref{eq:mass} by multiplying \eqref{eq:BBM1} by $u$ and then integrating in $x$ on the line,  resp.  
\eqref{eq:energy}   by multiplying \eqref{eq:BBM2} by $ u +  f(u)$.   The Cauchy problem is globally well posed in  
$H^1(\R )$, see  \cite{BBM1972} and, for a more general class of models,   \cite{SouganidisStrauss90}. In fact \eqref{eq:BBM2} is an ordinary differential equation, where the vector field in the right hand side is locally Lipschitz in $H^1(\R )$, and global existence is guaranteed by   $M(u(t))=M(u_0)$  where $u(0)=u_0$.

For $c>1$ there are solitons  ${Q}_c(x-ct-x_0)$ of the form
\begin{align}\label{eq:QQ1}
 Q_c( x):=(c-1)^{\!1/(p-1)}Q\left (  \sqrt{\frac{c-1}{c}}\,x \right ) \text{ with }
  Q( x) :=  \left(\frac{(p+1)}{2}\right)^{\!1/(p-1)}
 \sech^{\,2/(p-1)}\!\left(\frac{p-1}{2}
 \,x\right) 
\end{align}
which satisfy 
\begin{equation}\label{eq:soliton_eq}
	-cQ_c''+(c-1)Q_c- f(Q_c)=0   \text{  and }- Q ''+ Q - f(Q )=0 .
	\end{equation}
From Weinstein  \cite[Theorem 6.3]{weinCPDE1987}, see also Souganidis and Strauss    \cite{SouganidisStrauss90}, 
 we have the following.
 \begin{theorem}[Orbital Stability]\label{thm:orbstab} Let $p>1 $.  Then, for  
 \begin{align}\label{eq:c0p}
    c^*(p)=
\begin{cases}
1, & 1<p\leq 5,\\[2mm]
\displaystyle
1+\frac{p-5}{4}
\frac{\sqrt{2(p+3)}}{p-1+\sqrt{2(p+3)}},
& p>5 ,
\end{cases}
    \end{align}
  for any   $ c> c^*(p)$,  any $x_0\in \R$  and any $\epsilon >0$ there exists  a   $\delta_0(c, \epsilon  ) >0$ such that for all $0<\delta<\delta_0(c,  \epsilon)$ for any  $ {u} _{0}\in D _{ {H}^1} \left (  {Q}_c (\cdot -x_0), \delta    \right )    $   for the corresponding solution $u\in C ^0 ([0, +\infty ),   {H}^1 )$    there is a function $x(t)$ such that
\begin{equation}\label{eq:orbstab}
\sup_{t\geq 0} \| {u} -  {Q}_c (\cdot -x(t)) \|_{ {H}^1} \leq  \epsilon .
\end{equation}

    \end{theorem}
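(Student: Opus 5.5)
The plan is the Grillakis--Shatah--Strauss/Weinstein Lyapunov argument, using $\mathcal{L}_c(u)=E(u)+(c-1)M(u)$ up to sign conventions. We arrange that $Q_c$ is a critical point of this functional. Indeed, the soliton equation \eqref{eq:soliton_eq} is the Euler--Lagrange equation of
\[
S_c(u):=c\,M(u)-E(u)=\int_\R\Big(\tfrac{c}{2}u_x^2+\tfrac{c-1}{2}u^2-F(u)\Big)dx ,
\]
because $cM-E$ has derivative $-cu''+cu-u-f(u)$. Since $S_c$ is conserved and translation invariant, we write a solution near the orbit as $u(t)=Q_c(\cdot-x(t))+v(t)$. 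The shift $x(t)$ is chosen by the implicit function theorem so that $\langle v,(1-\partial_x^2)Q_c'(\cdot-x(t))\rangle=0$; this is possible while $v$ is small in $H^1$, since $\langle Q_c',(1-\partial_x^2)Q_c'\rangle\neq0$.

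The first step is spectral. The Hessian $L_c=-c\partial_x^2+(c-1)-f'(Q_c)$ is a Schrödinger operator with exponentially decaying potential. By Sturm--Liouville theory (since $Q_c'$ has one zero) it has exactly one negative eigenvalue, the kernel $\mathrm{span}\{Q_c'\}$, and essential spectrum $[c-1,\infty)$, which is positive because $c>1$. All of this follows from the explicit form \eqref{eq:QQ1} by rescaling to $-\partial_y^2+1-f'(Q)$.

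The second step, and the key one, is the stability condition. Let $d(c)=S_c(Q_c)$. Then $d'(c)=M(Q_c)$, and the GSS criterion requires $d''(c)=\frac{d}{dc}M(Q_c)>0$. With $Q_c(x)=(c-1)^{1/(p-1)}Q(\sqrt{(c-1)/c}\,x)$ one computes
\[
M(Q_c)=\tfrac12 (c-1)^{\frac{2}{p-1}}\sqrt{\tfrac{c}{c-1}}\Big(\|Q\|_2^2+\tfrac{c-1}{c}\|Q'\|_2^2\Big),
\]
and expresses $\|Q'\|_2^2/\|Q\|_2^2=\frac{p-1}{p+3}$ via Pohozaev identities. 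Differentiating in $c$ and simplifying, positivity of $\frac{d}{dc}M(Q_c)$ should hold for all $c>1$ when $p\le5$, and for $p>5$ exactly when $c>c^*(p)$. The threshold comes from solving a quadratic in $\sqrt{c/(c-1)}$, which produces the $\sqrt{2(p+3)}$ in \eqref{eq:c0p}. This algebra is where I expect the main care to be needed, in particular matching the precise formula for $c^*(p)$.

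The third step establishes coercivity. Given $d''(c)>0$, the standard argument applies: use the element $\partial_cQ_c$, which satisfies $L_c\partial_cQ_c=-(1-\partial_x^2)Q_c$, so that $\langle L_c\partial_cQ_c,\partial_cQ_c\rangle=-d''(c)<0$. This yields
\[
\langle L_cv,v\rangle\ge \kappa\|v\|_{H^1}^2 \quad\text{whenever } \langle v,(1-\partial_x^2)Q_c\rangle=\langle v,(1-\partial_x^2)Q_c'\rangle=0 .
\]

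The last step is the conclusion. Since $M(u)$ may differ from $M(Q_c)$, we either modulate $c$ to a nearby $c(t)$ with $M(Q_{c(t)})=M(u_0)$ (possible because $d''\ne0$), or handle the non-orthogonal component by the usual quadratic correction of size $O(|M(u_0)-M(Q_c)|)=O(\delta)$. Taylor expansion combined with conservation of $S_c$ then gives
\[
\kappa\|v\|_{H^1}^2-C\|v\|_{H^1}^3\le S_c(u_0)-S_c(Q_c)+C\delta\le C'\delta .
\]
A continuity argument (bootstrap on $\|v(t)\|_{H^1}$) yields $\|v(t)\|_{H^1}\lesssim\sqrt{\delta}$ for all $t\ge0$. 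Choosing $\delta_0$ small gives \eqref{eq:orbstab}. The nonlinearity $f$ is only $C^1$ with Hölder derivative when $p<2$, so the cubic remainder is replaced by $o(\|v\|_{H^1}^2)$, which suffices.
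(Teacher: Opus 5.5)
Your proposal is correct and follows the route the statement relies on. The paper does not prove this theorem itself; it quotes Weinstein's Theorem 6.3 and Souganidis--Strauss, which use exactly this Lyapunov/Grillakis--Shatah--Strauss argument: the functional $cM-E$ (not $E+(c-1)M$, as your first line says), the single negative eigenvalue of $L_c$, and the condition $m'(c)=\frac{d}{dc}M(Q_c)>0$ that the paper also invokes. The algebra you flagged does close: $m'(c)>0$ is equivalent to $8(p+1)c^2-8(p-1)c-(p-1)^2>0$, a quadratic in $c$ rather than in $\sqrt{c/(c-1)}$, whose positive root $\frac{(p-1)(2+\sqrt{2(p+3)})}{4(p+1)}$ equals $c^*(p)$ for $p>5$ and is $\le 1$ exactly when $p\le 5$.
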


In this paper we will deal with  the notion of asymptotic stability. 
Very important 
to us is  the linearized operator  
\begin{align}\label{eq:Lin}
   & \mathcal{L}_{[c]}: =(1-\partial _x ^2) ^{-1} \partial_xL_c,
  \text{ where }
 L_c:=-c\partial_x^2+c-1-f'(Q_c )  .
\end{align} 
Our main result is the following.

\begin{theorem}
  \label{thm:main} Let $ a>0$  and $p>1$. Then for any $c_0> c^*(p)$ and any small $\varepsilon >0$   there exists a  $\delta_0  >0$   such that for any 
  solution $u\in C^0(\R,H^1(\R))$ of  \eqref{eq:BBM2}   and  $ {u} _{0}\in D _{ {H}^1} \left (  {Q}_{c_0} (\cdot -x_0), \delta    \right )    $  for $0<\delta<\delta_0(c_0 ,\varepsilon)$  
  there exist  functions
  $ c \in C^1 (\R, (c_0-\varepsilon,c_0+\varepsilon) ),$ $D \in C^1 (\R , \R ) $ and
  $v\in C^0 (\R ,H^1(\R))$
     such that
  \begin{align} \label{eq:asstab1}
    & u(t)=    Q _{c (t)} (\cdot -D(t) ) + v(t, \cdot -D(t) ) \text{   with}
 \\&  \label{eq:asstab2}   \int _0^{+\infty} \|  e^{- a|x|}   v (t ) \| _{H^1(\R )}^2 dt <  \varepsilon   \\\label{eq:asstab20}
   &\lim _{t\to +\infty  }  \|  e^{- a| x|}   v (t ) \| _{L^2(\R )}     =0 
\end{align}
and there exists $c_+>0$ s.t.
\begin{align}
&\lim _{t\to +\infty}c (t)= c _+ . \label{eq:asstab3}
\end{align}
\end{theorem}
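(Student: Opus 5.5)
We follow the virial--Kato scheme: modulation, a high-energy virial estimate, and a Kato smoothing estimate on a suitably transformed variable, closed by a continuity argument.

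\textbf{Step 1: modulation.} By Theorem \ref{thm:orbstab} and the implicit function theorem we write $u(t)=Q_{c(t)}(\cdot-D(t))+v(t,\cdot-D(t))$. The parameters $c(t),D(t)$ are chosen so that $v$ satisfies two orthogonality conditions against the generalized kernel of the adjoint $\mathcal{L}_{[c]}^*$. Concretely, we impose $\langle v,(1-\partial_x^2)Q_c\rangle=0$ and $\langle v,(1-\partial_x^2)\partial_c Q_c\rangle$ modified appropriately, or equivalently $v\perp N_g(\mathcal{L}_{[c]}^*)$.

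We then derive the equation
\[
\partial_t v=\mathcal{L}_{[c]}v+(\dot D-c)\partial_x(Q_c+v)-\dot c\,\partial_c Q_c+\text{nonlinear terms}.
\]
The modulation equations give $|\dot c|+|\dot D-c|\lesssim \|e^{-a|x|}v\|_{L^2}^2+\|e^{-a|x|}v\|_{L^2}$. The linear part of $\dot c$ vanishes because $M$ and $E$ are conserved. Orbital stability keeps $\|v\|_{H^1}$ and $|c-c_0|$ small, which uses $c_0>c^*(p)$.

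\textbf{Step 2: virial estimate at large scale.} We use a functional $\mathcal{I}=\langle v,\chi_A(1-\partial_x^2)v\rangle$ with $\chi_A'\approx \zeta_A^2$, $\zeta_A=e^{-|x|/A}$. In the moving frame, the transport term $-c\partial_x$ combined with the dispersion of $(1-\partial_x^2)^{-1}\partial_x(-c\partial_x^2+c-1)$ produces a positive quadratic form. This holds because the group velocity of the linear BBM part is always below $c$ when $c>1$, so radiation moves left relative to the soliton.

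This yields
\[
\int_0^T\|\zeta_A v\|_{H^1}^2\,dt\lesssim \delta^2+\int_0^T\|e^{-a|x|}v\|_{L^2}^2\,dt+\text{modulation terms}.
\]
The error coming from the soliton potential $f'(Q_c)$ is localized, and this is exactly the term that must be controlled separately.

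\textbf{Step 3: Kato smoothing.} This is the main obstacle. To bound the localized norm $\int_0^T\|e^{-a|x|}v\|^2\,dt$ we write $v$ in the continuous spectral subspace of $\mathcal{L}_{[c]}$ and use Duhamel. We need a Kato smoothing estimate
\[
\int_{\R}\|e^{-a|x|}e^{t\mathcal{L}_{[c]}}P_c g\|_{L^2}^2\,dt\lesssim\|g\|^2 .
\]
This follows from a limiting absorption principle: uniform bounds on $e^{-a|x|}R_{\mathcal{L}_{[c]}}(\lambda\pm i0)e^{-a|x|}$ for $\lambda\in i\R$. These bounds require that there are no embedded eigenvalues or resonances, including at thresholds, and that $\mathcal{L}_{[c]}$ has no nonzero eigenvalues.

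We would establish this by a Jost function/Evans function analysis adapted to weights. Conjugation by $e^{\pm a x}$ shifts the essential spectrum into the left half-plane, in the spirit of Pego--Weinstein. The condition $c>c^*(p)$ excludes unstable eigenvalues; the paper's claim for all $p>1$ suggests that only the generalized kernel $\{\partial_xQ_c,\partial_cQ_c\}$ is present. The source terms in Duhamel are localized, so they are controlled by $\int\|\zeta_A v\|_{H^1}^2$ from Step 2 via the standard $L^2_t$ inhomogeneous Kato estimate. The weight mismatch between $e^{-a|x|}$ and $\zeta_A$ is compensated by taking $A$ large and $\delta$ small.

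\textbf{Step 4: closing and conclusions.} Combining Steps 2 and 3 in a continuity argument on $[0,T]$ gives \eqref{eq:asstab2} uniformly in $T$. From the modulation bound, $\dot c\in L^1(0,\infty)$, which gives \eqref{eq:asstab3}. For \eqref{eq:asstab20}, we observe that $\frac{d}{dt}\|e^{-a|x|}v\|_{L^2}^2$ is bounded, because the vector field is bounded in $H^1$ and $(1-\partial_x^2)^{-1}\partial_x$ is bounded. A nonnegative integrable function with bounded derivative tends to zero, which gives the limit.
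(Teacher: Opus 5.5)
\textbf{Steps 3 and the spectral input are genuine gaps.} Your Steps 1, 2 and 4 follow the paper's outline: coercivity of the virial from $c>1$, $\dot c\in L^1$ via $M(u_0)=m(c)+M(v)$, and the Lipschitz-plus-integrable argument for \eqref{eq:asstab20}. Step 3, however, does not work as stated, and it is where the real difficulty lies.

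In $L^2$ the point $0$ lies in the essential spectrum $\im\R$ of $\mathcal{L}_{[c]}$. The adjoint generalized eigenvector $\eta_1[c]$ in \eqref{eq:ker2} is bounded, but as $x\to+\infty$ it tends to $-\theta_1\partial_c\|Q_c\|_{L^1(\R)}$, which is generically nonzero. This has three consequences:
\begin{itemize}
\item there is no bounded projection $P_c$ on $H^1$, so $e^{t\mathcal{L}_{[c]}}P_cv(0)$ is meaningless for $v(0)\in H^1$;
\item the condition $v\perp N_g(\mathcal{L}^*_{[c]})$ cannot be imposed; the paper uses $\langle v,\zeta_B\eta_1[c]\rangle=0$ instead, which only gives $|\dot c|+|\dot D-c|\lesssim B^{-1}(\|v\|_{\Sigma_{1AA_1}}+\|v\|_{\Sigma_{2AA_1}})$, not a bound by $\|e^{-a|x|}v\|_{L^2}$;
\item your Duhamel sources are not localized: $(\dot D-c)v'$ is not, and neither is the nonlinearity, which away from the soliton behaves like $\partial_x(1-\partial_x^2)^{-1}f(v)$.
\end{itemize}

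The paper therefore applies Kato smoothing to $w=\mathbf{Q}_{[c_0]}\zeta_Bv$, which lies in the Pego--Weinstein space $L^2_a$ for $a<1/B$. Paying for this localization is exactly why the virial step is needed. It produces the source $[\zeta_B,\mathcal{L}_{[c_0]}]v$, of size $B^{-1}$ on the region $|x|\lesssim B$. Proposition \ref{prop:smooth2} requires sources in $L^{2,\sigma+1}$ with $\sigma>1/2$. So this term costs $A_1^{\sigma+1}B^{-1}\|v\|_{\Sigma_{1AA_1}}+B^{\sigma}\|v\|_{\Sigma_{2AA_1}}$ (Lemma \ref{lem:gKdV21hard}).

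Measured against a single-scale virial norm $\|\zeta_Av\|_{H^1}$, this cost is of order $B^{\sigma}\gg1$. It is linear in $v$, so neither taking $A$ large nor $\delta$ small removes it. Your sentence about compensating the weight mismatch is therefore exactly the missing step. The paper closes the loop with the two-region virial of Lemma \ref{prop:1virial}, which gains a factor $A_1^{-1}$ on $|x|\gtrsim A_1$. Choosing $A_1=B^{3/5}$ and $1/2<\sigma<3/5$ then gives $A_1^{\sigma+1}B^{-1}+B^{\sigma}A_1^{-1}=o_{B^{-1}}(1)$.

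The second gap is the spectral input behind the resolvent bound. You need $\mathcal{L}_{[c]}$ to have no eigenvalue $\im\tau$ with $\tau\neq0$ embedded in its continuous spectrum. This was known only for $p=2$ (Martel), and proving it for every $p>1$ is one of the paper's main contributions. Announcing a Jost/Evans function analysis is not an argument, since an Evans function does not by itself exclude zeros on $\im\R$.

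The paper exploits the P\"oschl--Teller form of $f'(Q_c)$:
\begin{enumerate}
\item Apply $(1-\partial_x^2)^{-1}L_c$ to an eigenfunction and change variables to $z=\tanh(\kappa x)$. The eigenvalue problem becomes a third-order Fuchsian equation with regular singular points $\pm1,\infty$.
\item Square integrability forces the transformed function $w$ to be a multiple of the Frobenius branch with $\Re\rho_{\pm,1}>0$ near $z=\pm1$. Hence $(z-1)^{-\rho_{+,1}}(z+1)^{-\rho_{-,1}}w$ is entire.
\item The exponent inequality $\Re(\rho_{+,1}+\rho_{-,1}+\rho_{\infty,3})>0$ makes this entire function decay at infinity, and Liouville's theorem gives $w\equiv0$.
\end{enumerate}
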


The proof of the various nonlinear estimates is simpler if $p\ge 2$. In the proof of  the nonlinear estimates we will consider the case  $1<p\le 2$, which is more delicate because $x\to f(x)$ is less regular. All the nonlinear estimates have simpler versions for $p>2$.   Notice   that the gBBM displays a notable feature which was already emphasized in \cite{CM24D4} for the NLS,  that is the fact that near   solitons the gBBM   has better dispersion properties than near  the 0 solution.

Theorem  \ref{thm:main} for  $p=2$ is due to El Dika \cite{ElDIkaasstab} and Mizumachi \cite{MizumachiBBM},  
who independently  combined the theory developed for gKdV in  Martel and Merle \cite{MaMeARMA2001,MaMeNonlinearity2005,MaMegeneral2008,MaMeRefined2008} with the theory of the linearized operator in Pego and Weinstein \cite{PegoWei2}. Since El Dika  and Mizumachi follow the theory in Pego and Weinstein \cite{PegoWei2},  their result is stated for all velocities $c>1$ except for those in a singular set, which at most is countable and, if infinite, is a divergent sequence $c_n\xrightarrow{n\to +\infty}+\infty $. The exceptional set is shown to be empty by Martel \cite[Theorem 2]{MartelLinear}, i.e.  the operator $ \mathcal{L}_{[c]}$ has no eigenvalues in $\im \R \setminus \{ 0 \}$ for all $c>1$ when $p=2$.  Here we extend this result proving absence of eigenvalues of 
$ \mathcal{L}_{[c]}$  in  $\im \R \setminus \{ 0 \}$ for all $c>1$  for all  $p>1$, that is  also  for velocities whose solitons are unstable. To this effect
we exploit  the fact that $L_c$  is a   P{\"o}schl--Teller operator, see \cite {PoschlTeller1933}. We  follow  the proof of absence of embedded eigenvalues for the quintic NLS in Perelman \cite[Sect.2.1.3]{Perelman01AHP}, later generalized by Krieger and Schlag \cite{KS06JAMS}.

Here,   rather than looking at the theory of Pego and Weinstein as  El Dika  and Mizumachi, we  exploit the  connection between virial inequalities and the smoothing estimates introduced by Kato \cite{katoMA1966}, since the latter provide   exactly the estimates required  for the terms that are obstructing the closure of the virial estimates.
  Asymptotic stability  is closely related to the notion of dispersion. We achieve dispersion by a   combined  virial--Kato method 
which has been successful 
on a diverse set of models, from the 1 D NLS in    \cite{CM24D1, CM243, CM24D4}, generalized KdV \cite{CM25D1}, Euler--Poisson \cite{BaCuMa} and 
   generalized  good Boussinesq equations \cite{BaCuDeSaMa}.  In the present paper we prove the smoothing estimates  by estimating the resolvent of $\mathcal{L}_{[c]}$ directly, without expressing it in terms of Jost functions and then estimating the Jost functions and thus shortening considerably the presentation compared to the previous papers.
   
   The virial--Kato methodology, while robust, for equations like gBBM 
     is based on the absence of nonzero  embedded eigenvalues inside the 
   continuous spectrum, a condition which might seem   overly restrictive. However   there is a rapidly  growing list  of models for which this condition has been proved, for example for the NLS recently in \cite{collot2025absenceembeddedspectrumnonlinear,LiYang2026, Kim2026}; for different models, see also the other references and this paper.

   Many questions on dispersion and on more strict definitions of asymptotic stability      remain  open. We refer to Morgan \cite{morgan2025} and Chen and Zhang \cite{chen2026scatteringquinticgeneralizedbenjaminbonamahony}, as well as to Albert \cite{AlbertBBM1986,AlbertBBM1989},    for scattering near 0; we refer to Kwak and Mu\~noz
\cite{KwakMunoz2019}  for an analysis of the decay of the solutions  in a   region of spacetime different from that in Theorem \ref{thm:main}; we refer to these papers for further references.   We refer to El Dika and Martel \cite{EldikaMartel} for a  study of  the orbital stability of multi-solitons.

 \section{Notation }\label{sec:notation}

\begin{notation}\label{not:notation} We will use the following miscellanea of  notations and definitions.
\begin{enumerate}

\item In the course of the paper we will sometimes use the notation $\dot u=  \partial _t  u$ and $  u'=  \partial _x  u$.

\item  For $\Omega $ an open subset of $\R ^d$ we write $g\in C^\omega (\Omega) $ if $g$ is real analytic in $\Omega$.

\item  We will use the Japanese bracket $\<x\> :=\sqrt{1+x^2}$.


\item We set
\begin{equation}
 \gamma := (c-1) c ^{-1}\in(0,1).
 \label{eq:gamma}
\end{equation}

\item In analogy to      Kowalczyk et al.   \cite{KMM2022},      we   consider constants  $A, B,  A_1, \epsilon , \delta  >0$ satisfying
 \begin{align}\label{eq:relABg}
\log(\delta ^{-1})\gg\log(\epsilon ^{-1})\gg\log(\varepsilon ^{-1}) \gg     A  \gg     B^2\gg B    \gg 1  \text{  and  $ A_1:=B^{3/5} $.}
 \end{align}

 \item The notation    $o_{\varepsilon}(1)$  means a constant   with a parameter $\varepsilon$ such that
 $o_{\varepsilon}(1) \xrightarrow {\varepsilon  \to 0^+   }0.$

\item
Following Pego and Weinstein  \cite{PegoWei2} for $a\in \R $ we set
\begin{align}\label{eq:pwweigh0}&
   L^2_{a}:=\{ v: \ e^{ax} v\in L^2(\R )   \}  \text{ with }\|  v  \| _{L^2_{a}}:=\| e^{ax} v  \| _{L^2 (\R )}.
\end{align}

\item We also define (notice that $\widetilde{\Sigma} $ and  $\widehat{\Sigma} $ have  natural Hilbert space structure)    \begin{align}&   \nonumber
   L ^{p,s}:=\{ v: \ \< x \> ^{s}   v\in L^p(\R )   \}  \text{ with }\|  v  \| _{ L ^{p,s}}:=\| \< x \> ^{s} v  \| _{L^p(\R ) }  , \\&  \widetilde{\Sigma}:=\{ v: \ \sech \(  \sqrt{\gamma}    \kappa  x\)   v \in L^2(\R )   \}  \text{ with }\|  v \|_{ \widetilde{\Sigma} } :=\left \| \sech \(     \sqrt{\gamma} \kappa   x\)   v\right \|_{L^2(\R )}  \text{ and }  \label{eq:Lebwight}  \\&  \widehat{\Sigma}:=\{ v: \ \sech \(      \kappa  x\)   v \in L^2(\R )   \}  \text{ with }\|  v \|_{ \hat{\Sigma} } :=\left \| \sech \(     \kappa   x\)   v\right \|_{L^2(\R )}.\nonumber
\end{align}

\item Given two Banach spaces $X$ and $Y$ we denote by $\mathcal{L}(X,Y)$ the space of continuous linear operators from $X$ to $Y$. We write $\mathcal{L}(X ):=\mathcal{L}(X,X)$.

\item Following  the framework in Kowalczyk et al. \cite{KMM2022} we   fix an even function $\chi\in C_c^\infty(\R , [0,1])$ satisfying
\begin{align}  \label{eq:chi} \text{$1_{[-1,1]}\leq \chi \leq 1_{[-2,2]}$ and $x\chi'(x)\leq 0$ and set $\chi_C:=\chi(\cdot/C)$  for a  $C>0$}.
\end{align}

\item For  $ \vartheta _1=  \chi  $   and  $ \vartheta _2= 1- \chi  $  we consider the partition of unity $1= \vartheta _{1A_1} + \vartheta _{2A_1} $  where $ \vartheta _{iA_1}(x):=  \vartheta _{i }\(\frac{x}{A_1}\)$.

\item Following  the framework in Kowalczyk et al. \cite{KMM2022}  and
using the function $\chi$  in \eqref{eq:chi}
we  consider for   $C>0$  the   function
\begin{align}\label{def:zetaC}
\zeta_C(x):=\exp\(-\frac{|x|}{C}(1-\chi(x))\)   \text {  and   }  \varphi_C (x):=\int_0^x \zeta_C^2(y)\,dy.
\end{align}
We similarly define
\begin{align}\label{def:phiAi}   \varphi _{iAA_1}  (x):=\int_0^x \zeta_A^2(y) \vartheta _{iA_1} ^2 (y)\,dy  .
\end{align}

\item   We consider  semi--norms
\begin{align}
&
\| v  \|_{  { \Sigma }_{iA A_1}} :=\left \|  \left ( \vartheta _{iA_1} \zeta_A v \right ) ' \right \|_{L^2(\R )} + \left \|   \vartheta _{iA_1} \zeta_A v      \right\|_{L^2(\R )}  \text{for $i=1,2$ .} \label{eq:normA} 
\end{align}

\item We set
\begin{align}
  \label{eq:C+} {\C} _+:=\{ z\in \C : \Re z > 0     \}.
\end{align}

 \end{enumerate}

\end{notation} 

\section{Linearization and  modulation }\label{sec:mod}

Proceeding as in \cite{MillerWeinsteinBBM} it is possible to prove the following.

\begin{lemma}\label{lem:kernel}
Let $p>1$ and $c>c^*(p)$.
The following hold, where $0<a<\kappa_c$ for an appropriately
small $\kappa_c=\kappa_c(p)>0$.
\begin{description}
    \item[a]   Let  \begin{align}& \label{eq:ker1}
  \xi _1 [c](x )   := Q _c '(x )  , \quad   \xi _2 [c](x ):= \partial _c Q _c (x ) . 
\end{align}
Then, in $L^2(\R )$,   $\ker   \mathcal{L}_{[c]} =\Span \{     \xi _1 [c]\}$ and $ \displaystyle N_g( \mathcal{L}_{[c]}):=\bigcup _{n=1}^{+\infty}\ker   \mathcal{L}_{[c]}^n =\Span \{     \xi _1 [c] ,  \xi _2 [c]  \}   $. The same holds in $L^2_a(\R )$.  
 We have \begin{align}\label{eq:M2.3}&
   \mathcal{L}_{[c]}\xi _1 [c]=0 , \quad  \mathcal{L}_{[c]}\xi _2 [c]=-\xi _1 [c]   . 
\end{align}

 \item[b]  There are   $(c\to \theta _j [c])\in C^\omega ( (c^*(p),+\infty ), \R )   $  such that for  \small
 \begin{align}&   \eta _1 [c](x ):=\theta _1 \left ( \partial _x \partial _c Q_c- \int _{-\infty} ^{x } \partial _c Q _c (y) dy   \right ) +\theta _2(1-\partial ^2_x) Q  _c(x ) , \quad   \eta _2 [c](x ):=\theta _3   (1-\partial ^2_x)  Q _c(x )  \label{eq:ker2}
\end{align}\normalsize
such that, for $\delta_{ij}$ the Kronecker delta,
   \begin{align} & \label{eq:M2.4}
  \mathcal{L}_{[c]}^*\eta _2 [c]=0 , \quad  \mathcal{L}_{[c]}^*\eta _1 [c]=- \eta _2 [c],\\& \label{eq:M2.5}
  \<  \xi _i [c],\eta _j [c] \> = \delta_{ij}. 
\end{align}

\end{description}
\end{lemma}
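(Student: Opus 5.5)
The plan is to reduce everything to the scalar operator $L_c$, using two identities obtained from \eqref{eq:soliton_eq}. Differentiating $-cQ_c''+(c-1)Q_c-f(Q_c)=0$ in $x$ gives
\begin{align*}
L_cQ_c'=0 .
\end{align*}
Differentiating it in $c$ gives $-Q_c''+Q_c+L_c\partial_cQ_c=0$, that is
\begin{align*}
L_c\partial_cQ_c=-(1-\partial_x^2)Q_c .
\end{align*}
The first identity yields $\mathcal{L}_{[c]}\xi_1[c]=0$. The second yields $\mathcal{L}_{[c]}\xi_2[c]=(1-\partial_x^2)^{-1}\partial_x\bigl(-(1-\partial_x^2)Q_c\bigr)=-\xi_1[c]$, which is \eqref{eq:M2.3}.

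\textbf{Kernel.} Let $v\in L^2$ with $\mathcal{L}_{[c]}v=0$. Then $\partial_xL_cv=0$, so $L_cv$ is a constant; a constant lying in $H^{-2}$ must vanish, so $L_cv=0$. Since $L_c$ is a one-dimensional Sturm--Liouville operator, its $L^2$ kernel is spanned by the decaying solution $Q_c'$. Hence $\ker\mathcal{L}_{[c]}=\Span\{\xi_1[c]\}$.

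\textbf{Generalized kernel.} If $\mathcal{L}_{[c]}w=\xi_1$, the same argument gives $L_cw=-(1-\partial_x^2)Q_c$, so $w=-\xi_2+\alpha\xi_1$. For a third Jordan level, suppose $\mathcal{L}_{[c]}w=\xi_2$. Set $G(x):=\int_{-\infty}^x\partial_cQ_c$. Then $L_cw=(1-\partial_x^2)G+k$ for some constant $k$.
\begin{itemize}
\item Since $L_cw\to(c-1)w$ at both ends for decaying $w$, we need $k=0$.
\item Solvability of $L_cw=(1-\partial_x^2)G$ requires orthogonality to $\ker L_c$. Integrating by parts, $\langle(1-\partial_x^2)G,Q_c'\rangle=-\langle(1-\partial_x^2)\partial_cQ_c,Q_c\rangle=-\frac{d}{dc}M(Q_c)$.
\end{itemize}
The condition $\frac{d}{dc}M(Q_c)\neq0$ is exactly the stability condition $c>c^*(p)$, as in Weinstein's criterion underlying Theorem~\ref{thm:orbstab}. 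I would verify this by computing $M(Q_c)$ explicitly from \eqref{eq:QQ1} via scaling. With it there is no third level, so $N_g(\mathcal{L}_{[c]})=\Span\{\xi_1,\xi_2\}$.

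\textbf{Weighted space $L^2_a$.} The same argument works provided every solution of $L_cw=g$ either decays or grows like $e^{\pm\sqrt{(c-1)/c}\,x}$, with $g$ decaying. Taking $\kappa_c<\sqrt{(c-1)/c}$ forces the growing modes, and the constant $k$, to be excluded from $L^2_a$. Checking this bookkeeping at $\pm\infty$, where the weight $e^{ax}$ is asymmetric, is the main technical point of part (a).

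\textbf{Part (b).} Here $\mathcal{L}_{[c]}^*=-L_c\partial_x(1-\partial_x^2)^{-1}$.
\begin{itemize}
\item \emph{$\eta_2$.} Since $\mathcal{L}_{[c]}^*(1-\partial_x^2)Q_c=-L_cQ_c'=0$, we have $\mathcal{L}_{[c]}^*\eta_2=0$.
\item \emph{$\eta_1$.} The identity $\partial_x\partial_cQ_c-G=-(1-\partial_x^2)G$ (since $G'=\partial_cQ_c$) gives
\begin{align*}
\mathcal{L}_{[c]}^*(\partial_x\partial_cQ_c-G)=L_cG'=L_c\partial_cQ_c=-(1-\partial_x^2)Q_c .
\end{align*}
Hence $\mathcal{L}_{[c]}^*\eta_1=-(\theta_1/\theta_3)\eta_2$, and we take $\theta_1=\theta_3$.
\end{itemize}
Next come the pairings \eqref{eq:M2.5}. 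Note that $\eta_1\notin L^2$, since $G$ tends to a constant at $+\infty$, but the pairings with the exponentially decaying $\xi_j$ make sense and are set in the dual $L^2_{-a}$.
\begin{itemize}
\item $\langle\xi_1,\eta_2\rangle=\theta_3\langle Q_c',(1-\partial_x^2)Q_c\rangle=0$ by parity.
\item $\langle\xi_2,\eta_2\rangle=\theta_3\frac{d}{dc}M(Q_c)$, so $\theta_3:=\bigl(\frac{d}{dc}M(Q_c)\bigr)^{-1}$.
\item $\langle\xi_1,\eta_1\rangle=\theta_1\langle Q_c',-(1-\partial_x^2)G\rangle=\theta_1\frac{d}{dc}M(Q_c)=1$, because the $\theta_2$ term vanishes by parity.
\item $\langle\xi_2,\eta_1\rangle=\theta_1\langle\partial_cQ_c,-(1-\partial_x^2)G\rangle+\theta_2\frac{d}{dc}M(Q_c)$. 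This is set to $0$, which determines $\theta_2$.
\end{itemize}

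\textbf{Analyticity and the main obstacle.} Real analyticity of $c\mapsto\theta_j[c]$ on $(c^*(p),+\infty)$ follows from the explicit formula \eqref{eq:QQ1}. It makes $c\mapsto Q_c$ analytic with values in exponentially weighted spaces, hence all the pairings above are analytic in $c$, and $\frac{d}{dc}M(Q_c)$ is nonvanishing there. The main obstacle I expect is the explicit sign computation showing $\frac{d}{dc}M(Q_c)\neq0$ precisely for $c>c^*(p)$. I would try to reduce it, via the scaling in \eqref{eq:QQ1}, to Beta-function integrals of $\sech^{4/(p-1)}$.
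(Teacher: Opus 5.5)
Your proposal is correct and follows essentially the paper's proof. It uses the same differentiated identities and the same reduction of $\ker\mathcal L_{[c]}$ to $\ker L_c=\Span\{Q_c'\}$. It builds the same adjoint vectors with $\theta_1=\theta_3=1/m'(c)$; the paper makes $\theta_2=(\int_{\R}\partial_cQ_c\,dx)^2/(2m'(c)^2)$ explicit and cites Weinstein for $m'(c)>0$. Your obstruction $\langle L_cw,Q_c'\rangle=0$ is the paper's pairing of $\mathcal L_{[c]}\zeta$ with $\eta_2$ in different form, since $\langle\mathcal L_{[c]}w,(1-\partial_x^2)Q_c\rangle=-\langle L_cw,Q_c'\rangle$.

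There are two small slips, neither fatal. First, $\mathcal L_{[c]}w=\xi_1$ gives $L_cw=+(1-\partial_x^2)Q_c$, not $-(1-\partial_x^2)Q_c$. Second, in $L^2_a$ the constant $k$ is not excluded: the behaviour at $+\infty$ forces $k=-\int_{\R}\partial_cQ_c\,dx$, while a constant is admissible at $-\infty$. This does not matter, because $\int_{\R}Q_c'\,dx=0$ gives $\langle(1-\partial_x^2)G+k,Q_c'\rangle=-m'(c)$ for every $k$. So the weighted-space bookkeeping you single out as the main technical point is not actually needed.
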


\begin{proof}

Differentiating \eqref{eq:soliton_eq} with respect to $x$ and $c$ gives
\begin{align}
L_cQ_c'&=0, \label{eq:Lc-Qprime}\\
L_c\partial_cQ_c&=Q_c''-Q_c=-(1-\partial_x^2)Q_c . \label{eq:Lc-dcQ}
\end{align}
Hence the following gives  \eqref{eq:M2.3}, 
\[
\mathcal L_{[c]}Q_c'=0,
\qquad
\mathcal L_{[c]}\partial_cQ_c
=
(1-\partial_x^2)^{-1}\partial_x(-(1-\partial_x^2)Q_c)
=
-Q_c'.
\]
If $\mathcal L_{[c]}u=0$, then    $\partial_xL_cu=0.$   Thus   $L_cu=0$. 
Since $Q_c'$ has exactly one zero, the  Sturm--Liouville theory, see in particular \cite[p. 454]{CourantHilbert1}, and
\eqref{eq:Lc-Qprime} give   $\ker L_c=\Span\{Q_c'\}$
and therefore  $\ker\mathcal L_{[c]}=\Span\{\xi_1[c]\}.$
Next, we have   
\[
\mathcal L_{[c]}^*((1-\partial_x^2)Q_c)
=
-L_cQ_c'
=
0
\]
giving the first equality in \eqref{eq:M2.4} for any $\theta _3$.
Set now 
\[
F_c(x)
:=
\int_{-\infty}^x\partial_cQ_c(y)\,dy \text{ and }
G_c
:=
\partial_x\partial_cQ_c-F_c.
\]
Since from the definition of $F_c$ and $G_c$ we have
\[
(1-\partial_x^2)F_c
=
F_c-\partial_x\partial_cQ_c =-G_c,
\]
we  conclude
$(1-\partial_x^2)^{-1}G_c=-F_c,$
and therefore, by \eqref{eq:Lc-dcQ},
\[
\mathcal L_{[c]}^*G_c
=
-L_c\partial_x(1-\partial_x^2)^{-1}G_c =L_c\partial_xF_c
=
L_c\partial_cQ_c
=
-(1-\partial_x^2)Q_c,
\]
 This implies the second equality in \eqref{eq:M2.4}  if, as we will, we choose $  \theta_1(c)=\theta_3(c)$. We
     choose 
\begin{align}\label{eq:thetas}
  \theta_1(c)=\theta_3(c)=\frac1{m' (c)}  \text{  and }
\theta_2(c)=\frac{(\partial _c \|  Q_c\| _{L^1(\R )})^2}{2(m' (c))^2}, 
\end{align} for 
$m(c) :=M(Q_c)$. Notice that 
  \eqref{eq:thetas} is well defined because $m'(c)>0$ for all $1<p $ and $c> c^*(p)$,  see \cite[pp. 1165--7]{weinCPDE1987}. Notice also 
  that the formulas
\begin{align*} \|  Q_c \| _{L^2(\R) }^2 &=(c-1)^{\frac{5-p}{2(p-1)}}c^{1/2} \|  Q \| _{L^2(\R) }^2
    \text{ and }\\  \|  Q_c '\| _{L^2(\R) }^2 & =(c-1)^{\frac{p+3}{2(p-1)}}c^{-1/2}\|  Q ' \| _{L^2(\R) }^2 =(c-1)^{\frac{p+3}{2(p-1)}}c^{-1/2}  \frac{p-1}{p+3}\|  Q  \| _{L^2(\R) }^2,
\end{align*}
   where the last equality follows by testing the second equation in \eqref{eq:soliton_eq} against $Q$ and $xQ'$,
imply that the functions in \eqref{eq:thetas} are   in $ C^\omega ( (c^*(p),+\infty ), \R )$.
Then
\begin{align*}
   \langle Q_c',(1-\partial_x^2)Q_c\rangle=0 \text{ and } \langle \partial_cQ_c,(1-\partial_x^2)Q_c\rangle=m'(c)\Longrightarrow   
  \<  \xi _i [c],\eta _2 [c] \> =\delta _{i2}.
\end{align*}
We have
\begin{align*}
\langle Q_c',G_c\rangle
&=
-\langle Q_c',(1-\partial_x^2)F_c\rangle
=
 \langle (1-\partial_x^2)Q_c ,F_c'\rangle =\langle (1-\partial_x^2)Q_c ,\partial _cQ_c\rangle =m'(c)
\end{align*}
and
\begin{align*}
\langle\partial_cQ_c,G_c\rangle
&=
\langle \partial_cQ_c ,\partial_x\partial_cQ_c\rangle  - \langle F_c', F_c\rangle = - \langle F_c', F_c\rangle =
-\frac12
\left(
\int_{\R}\partial_cQ_c\,dx
\right)^2
.
\end{align*}
    Therefore we obtain the following, completing the proof of \eqref{eq:M2.5},
\[
\langle\xi_1,\eta_1\rangle=1 
 \text{ and }
\langle\xi_2,\eta_1\rangle
=
-\frac{(\partial _c \|  Q_c\| _{L^1(\R )})^2}{2m' (c)}
+
\frac{(\partial _c \|  Q_c\| _{L^1(\R )})^2}{2m' (c)}
=
0.
\]
Since $\mathcal L_{[c]}\xi_2=-\xi_1,$ we have $\Span\{\xi_1,\xi_2\}
\subseteq N_g(\mathcal L_{[c]}).$
If $\zeta\in\ker\mathcal L_{[c]}^2$, then
 $\mathcal L_{[c]}\zeta=\alpha\xi_1,$
 and hence
 $\mathcal L_{[c]}(\zeta+\alpha\xi_2)=0.$
 Thus
 $\zeta+\alpha\xi_2=\beta\xi_1,$
so
\[
\ker\mathcal L_{[c]}^2
=
\Span\{\xi_1,\xi_2\}.
\]
If a Jordan chain of length three existed, there would be $\zeta$ and
$\alpha,\beta$, with $\beta\neq0$, such that
\[
\mathcal L_{[c]}\zeta
=
\alpha\xi_1+\beta\xi_2.
\]
Applying  $\langle \cdot , \eta_2\rangle $ gives
\[
0
=
\langle\zeta,\mathcal L_{[c]}^*\eta_2\rangle
=
\langle\mathcal L_{[c]}\zeta,\eta_2\rangle
=
\beta,
\]
a contradiction. Consequently, in $L^2(\R )$
\[
N_g(\mathcal L_{[c]})
=
\Span\{\xi_1[c],\xi_2[c]\}.
\]
The same argument applies in $L_a^2(\R)$ for
$0<a<\kappa_c$.  See also \cite{PegoWei2} and \cite{MillerWeinsteinBBM}.
\end{proof}

Key for us is the following absence of nonzero embedded eigenvalues.  
 
\begin{proposition}\label{prop:noembeig} For any $p>1$ and $c>c^*(p)$ the operator $\mathcal{L}_{[c]}$ does not have in $L^2(\R)$   any nonzero eigenvalue $\lambda$  with $\Re \lambda \ge 0$. The same statement is true in $L^2_a(\R)$ for $0<a<\kappa _c$.
    \end{proposition}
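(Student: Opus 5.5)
Our plan splits $\Re\lambda\ge 0$, $\lambda\neq 0$, into two cases: $\Re\lambda>0$, and $\lambda=\im\omega$ with $\omega\in\R\setminus\{0\}$. In both cases we start from an eigenfunction $u$ with $\mathcal{L}_{[c]}u=\lambda u$, equivalently
\begin{align*}
\partial_x L_c u=\lambda (1-\partial_x^2)u .
\end{align*}
Standard ODE asymptotics (the potential $f'(Q_c)$ decays exponentially) show that $u\in H^\infty$ and that $u$ decays exponentially, in $L^2$ as well as in $L^2_a$ for $0<a<\kappa_c$. So the $L^2_a$ statement reduces to the $L^2$ one once we check that $L^2_a$ eigenfunctions with $\Re\lambda\ge0$ and $a$ small also lie in $L^2$. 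This holds because the characteristic roots of the constant coefficient operator at $\pm\infty$ stay away from the line $\Re=-a$ when $a<\kappa_c$ and $\Re\lambda\geq 0$. Also $\lambda\neq0$ forces $u\perp (1-\partial_x^2)Q_c=\eta_2/\theta_3$: pair the equation with $\eta_2$ and use \eqref{eq:M2.4}.

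\textbf{Case $\Re\lambda>0$.} This is linear stability. We use the Hamiltonian structure $\mathcal{L}_{[c]}=J L_c$ with $J=(1-\partial_x^2)^{-1}\partial_x$ skew. Set $w=(1-\partial_x^2)u$. Then
\begin{align*}
\lambda\langle (1-\partial_x^2) u,\bar u\rangle=\langle \partial_x L_c u,\bar u\rangle=-\langle L_cu,\bar u'\rangle .
\end{align*}
The standard argument proceeds as follows. Taking real parts of $\lambda\langle u,(1-\partial_x^2)\bar u\rangle$ against $\langle L_c u,\bar u\rangle$, the quadratic form $\langle L_c u,\bar u\rangle$ must vanish when $\Re\lambda\neq0$. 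Combined with $u\perp(1-\partial_x^2)Q_c$ (for both real and imaginary parts), this contradicts the coercivity of $L_c$ on $\{(1-\partial_x^2)Q_c\}^\perp$ modulo $Q_c'$. That coercivity is exactly Weinstein's condition $m'(c)>0$ \cite{weinCPDE1987} used in Lemma \ref{lem:kernel}: $L_c$ has one negative eigenvalue, and $\langle L_c^{-1}(1-\partial_x^2)Q_c,(1-\partial_x^2)Q_c\rangle=-m'(c)<0$. This is a Grillakis–Shatah–Strauss/Pego–Weinstein count. We will write it out, including the component along $Q_c'$, which is harmless because $\langle L_c Q_c',\cdot\rangle=0$.

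\textbf{Case $\lambda=\im\omega$.} This is the main obstacle. Rescale $y=\sqrt{\gamma}\,x$. Then $L_c$ becomes $(c-1)$ times the Pöschl–Teller operator
\begin{align*}
-\partial_y^2+1-p\,\tfrac{p+1}{2}\,\sech^2\!\big(\tfrac{p-1}{2}y\big).
\end{align*}
Its Jost solutions are explicit hypergeometric functions, and its resolvent is likewise given in closed form. Following Perelman \cite{Perelman01AHP} and Krieger–Schlag \cite{KS06JAMS}, we integrate once: since $u$ decays, $L_c u=\im\omega(1-\partial_x^2)\partial_x^{-1}u$. Put $U=\int_{-\infty}^x u$, which decays because $\int u=0$ follows from integrating the equation. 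This gives a fourth-order ODE
\begin{align*}
L_c U'=\im\omega (U-U'').
\end{align*}
For real $\omega\ne0$, the characteristic equation at infinity, $c k^3-(c-1)k=\im\omega(1-k^2)$, has roots on the imaginary axis; the embedded eigenvalue would be a solution decaying at both ends. Our approach is to conjugate by the Darboux/Crum transform that factorizes Pöschl–Teller, $L_c=A^*A+\text{const}$ (iterated when $2/(p-1)$ is not an integer, using the supersymmetric intertwining valid for the $\sech^2$ family). This transforms the problem into one with a potential of definite sign. A virial/Pohozaev identity, multiplying by $x\bar U'$-type multipliers and taking real and imaginary parts, then forces $U=0$. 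If the intertwining is unavailable for general $p$, the fallback is a monotonicity argument on the Wronskian-type quantity $\Im(\bar U U'''-\dots)$, which is conserved and must vanish for a decaying solution. We combine it with the oscillatory behaviour of the purely imaginary characteristic roots. We expect this step to need the most care, especially for $1<p<2$, where $f'$ is only Hölder, although $f'(Q_c)$ itself is smooth since $Q_c>0$.
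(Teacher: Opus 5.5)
\textbf{Genuine gap: the case $\lambda=\im\omega$ in $L^2$.} You rightly identify this as the main obstacle, but it is not proved. Your integrated equation $L_cU'=\im\omega(U-U'')$ is exactly the paper's starting point. It is third order, not fourth, and $U=v/(\im\omega)$ with Martel's $v=(1-\partial_x^2)^{-1}L_cu$. Neither continuation you propose closes:
\begin{itemize}
\item The Darboux factor $\partial_y+\ell\tanh(\cdot)$ intertwines $L_c$ with the next P\"oschl--Teller operator. But it commutes neither with $\partial_x$ nor with $1-\partial_x^2$, so applying it to $L_cU'=\im\omega(1-\partial_x^2)U$ does not give a closed equation for a new unknown. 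The ``virial/Pohozaev identity'' that should then force $U=0$ is never specified.
\item The fallback cannot work. The conserved flux of an eigenvalue problem with $\lambda\in\im\R$ is constant in $x$, hence identically zero for any solution decaying at both ends. Moreover, an $L^2$ eigenfunction has no component on the oscillating mode at either end. So neither ingredient says anything about an $L^2$ eigenfunction.
\end{itemize}
The idea you are missing is the paper's complex-analytic one. Set $z=\tanh(\kappa x)$ with $\kappa=\frac{p-1}{2}\sqrt{(c-1)/c}$. Then $\partial_x=\kappa(1-z^2)\partial_z$ and $pQ_c^{p-1}=(c-1)\frac{p(p+1)}{2}(1-z^2)$, so $w(z)=v(x)$ solves a third-order Fuchsian equation whose only singular points $\pm1,\infty$ are regular, with explicit Frobenius exponents. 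The condition $\int_{-1}^1|w|^2(1-z^2)^{-1}dz<\infty$ forces $w$, near each of $\pm1$, to be a multiple of the Frobenius solution with the unique exponent $\rho_{\pm,1}$ of positive real part. Hence $(z-1)^{-\rho_{+,1}}(z+1)^{-\rho_{-,1}}w$ continues to an entire function. The smallest exponent at $\infty$ is $-\frac{2}{p-1}$, and an explicit computation gives $\Re(\rho_{+,1}+\rho_{-,1})>\frac{2}{p-1}$. So this entire function tends to $0$ at infinity, and Liouville gives $v\equiv0$. This works for every $p>1$ and uses no reflectionless or integrable structure.

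\textbf{The $L^2_a$ reduction.} It fails precisely on the imaginary axis. Root location yields $L^2_a$ eigenfunctions $\subset L^2$ only when no characteristic root at infinity satisfies $-a<\Re\mu\le0$: the weighted condition is stronger than $L^2$ at $+\infty$ but weaker at $-\infty$. Keeping roots off the line $\Re\mu=-a$ is not the relevant condition. For $\lambda=\im\omega$ there is always the root $\mu=\im k$, so an $L^2_a$ eigenfunction may carry a bounded tail $e^{\im kx}$ as $x\to-\infty$. Removing it needs a separate argument, which the paper cites from Pego--Weinstein. A conserved flux of the type you proposed as a fallback is a natural tool here: it vanishes at $+\infty$, while at $-\infty$ it is a positive multiple of the squared modulus of the coefficient of $e^{\im kx}$.

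\textbf{The case $\Re\lambda>0$.} Your Grillakis--Shatah--Strauss argument is a valid and more elementary alternative to the Lin--Zeng index theorem invoked by the paper; both rest on $n^-(L_c)=1$ and $m'(c)>0$. However, the identity you display does not give $\int L_cu\,\bar u\,dx=0$. Its real part only yields $\Re\lambda\,\|u\|_{H^1}^2=-\frac12\int (f'(Q_c))'|u|^2dx$, which has no sign. Instead write $u=V'$ with $V:=\lambda^{-1}(1-\partial_x^2)^{-1}L_cu$, and integrate $L_cu=\lambda(V-V'')$ against $\overline{V'}$. Since $\int (V-V'')\overline{V'}\,dx\in\im\R$ while $\int L_cu\,\bar u\,dx\in\R$, the latter vanishes whenever $\Re\lambda\neq0$.
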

\begin{proof}  The  index theorem in
\cite[Theorem 11.1]{LZ22Memoiers} gives
\begin{equation}
k_r+2k_c+2k_i^{\leq 0}+k_0^{\leq 0}
=n^-(L_c) .
\end{equation}
 On the right we have the total multiplicity of the negative eigenvalues of $L_c$, which is $n^-(L_c)=1$. Since $m'(c)>0$, see below \eqref{eq:thetas}, $k_0^{\leq 0}=1$ by 
\cite[Theorem 11.1]{LZ22Memoiers}. This implies that $k_r=k_c=k_i^{\leq 0}=0$  where $k_r$ is the sum of the multiplicities of the strictly positive eigenvalues of 
$\mathcal{L}_{[c]}$ and $k_c$ is the sum of the multiplicities of the  eigenvalues  of 
$\mathcal{L}_{[c]}$ in the interior of the first quadrant in $\C$. This implies in particular that there are no eigenvalues with
  $\Re\lambda>0$  in
$L^2(\R)$. 
The same conclusion holds by simple arguments in $L_a^2(\R)$ for
$0<a<\gamma ^{1/2}\kappa_c$, with $\kappa_c$    the constant in {Lemma} \ref{lem:roots1}.  

The absence of $0\neq\lambda\in\im\R$ eigenvalues in $L^2(\R)$,
already known for  $p=2$ by Martel
\cite{MartelLinear}, is proved in \S\ref{sec:noemb}.
Finally, this implies the absence of
$0\neq\lambda\in\im\R$ eigenvalues in $L_a^2(\R)$ by the argument
developed in detail for KdV--type equations in Pego and Weinstein
\cite[Theorem~3.6]{PegoWei1}, which, as stated therein,  applies also to BBM.
 \end{proof}

For $c_0>1$ and $r\in\R_+:=(0,\infty)$ and for   $D _{H ^1  (\R )}(u,r):=\{v\in H ^1  (\R )\ |\ \|u-v\| _{H ^1  (\R )}<r \}$,  we set     $$\mathcal{U} (c _0 ,r  ) := \bigcup _{  x_0\in \R }  e^{-x_0 \partial _x}  D_{H ^1  (\R )}({Q}_{c _0},r ). $$

\begin{lemma}[Modulation]\label{lem:mod1}   For any $c_0>c^*(p)$
there exist     $\epsilon _0 >0$  and  $ B_0>0$   with $\epsilon  _0 \ll B_0 ^{-3/2}$
such that for $  \epsilon  _0 ^{-1 }\gg    B\ge B_0$ and for the function  $\zeta _{B}$   in \eqref{def:zetaC} there exists
$$(c,D)     \in C^1\left (  \mathcal{U} (c_0 ,\epsilon  _0 ), ( c^*(p), +\infty ) \times \R   \right ), $$
 such that for  any $ {u} \in \mathcal{U} (c_0,\epsilon  _0   )$
  \begin{align}\label{61} &   {v} ( {u}):=e^{   D( {u}) \partial _x }  {u} -  {Q}  _{c (u)}     \text{ satisfies }   \\& \nonumber  \<   {v} ( {u}), \zeta _{B} \eta _1 [c (u)]   \> =  \<    {v} ( {u}),   \eta _2 [c (u)]   \> =0. \end{align}  
We have the identities
\begin{align}& c  (  {Q}   _{c_1}  )  =c_1  \text{, }   D  ( {Q}   _{c_1}  )=0
  \text{, }   D  (  e^{  - D _0 \partial _x }   {u})    = D  (    { u})    + D_0 \text{ and } c  (  e^{  - D _0 \partial _x }   {u})    =  c  (     {u}).\label{eq:equivariance}
\end{align}
Furthermore,  there exists a constant $K  >0$ such that
\begin{align}
  \label{eq:mod2}  B ^{-\frac{1}2}|D( {u}) -D_0| + |c ({u})- c_0|\le K      \epsilon  _0 \text{ for all  }  {u}\in  e^{-D_0 \partial _x}  D_{ {H} ^1   }( {Q}   _{c_0}    ,\epsilon   _0    ).
\end{align}
Set
$r_0:=(c_0-c^*(p))/2$ and
$J_0:=[c_0-r_0,c_0+r_0]\subset(c^*(p),+\infty)$.
After decreasing $\epsilon_0$ if necessary, we assume
$K\epsilon_0\leq r_0$.
Then \eqref{eq:mod2} implies that
$c(u)\in J_0$ for every $u\in\mathcal U(c_0,\epsilon_0)$.
\end{lemma}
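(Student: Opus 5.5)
The plan is to apply the implicit function theorem to the map
\[
\mathcal F(u,c,D):=\Big(\<e^{D\partial_x}u-Q_c,\ \zeta_B\eta_1[c]\>,\ \<e^{D\partial_x}u-Q_c,\ \eta_2[c]\>\Big)
\]
near $(Q_{c_0},c_0,0)$. Translation equivariance then extends the construction to all of $\mathcal U(c_0,\epsilon_0)$.

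\textbf{Regularity and base point.} First I check that $\mathcal F$ is $C^1$ in $H^1\times(c^*(p),+\infty)\times\R$.
\begin{itemize}
\item The dependence on $u$ is linear and bounded.
\item For $D$, I move the translation onto the test functions, $\<e^{D\partial_x}u,g\>=\<u,e^{-D\partial_x}g\>$. The $D$-derivative is then $\<u,-\partial_x e^{-D\partial_x}g\>$, which is continuous because $\zeta_B\eta_1$ and $\eta_2$ lie in $H^1$.
\item For $c$, I use that $c\mapsto Q_c$ and $c\mapsto\theta_j(c)$ are real analytic by \eqref{eq:QQ1} and Lemma \ref{lem:kernel}.
\end{itemize}
One caveat is that $\eta_1$ contains $\int_{-\infty}^x\partial_cQ_c$, which tends to a nonzero constant as $x\to+\infty$. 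So $\eta_1\notin L^2$, and this is exactly why the cutoff $\zeta_B$ is inserted: $\zeta_B\eta_1$ decays like $e^{-|x|/B}$ and lies in $H^1$. At the base point, $\mathcal F(Q_{c_0},c_0,0)=0$.

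\textbf{The Jacobian.} I compute the derivative in $(c,D)$ at the base point. Since $v=0$ there, only the derivative of $e^{D\partial_x}u-Q_c$ contributes. This derivative is $\xi_1\,dD-\xi_2\,dc$, with $\xi_1=Q_c'$ and $\xi_2=\partial_cQ_c$. The Jacobian is therefore
\[
\begin{pmatrix}\<\xi_1,\zeta_B\eta_1\> & -\<\xi_2,\zeta_B\eta_1\>\\ \<\xi_1,\eta_2\> & -\<\xi_2,\eta_2\>\end{pmatrix}.
\]
By \eqref{eq:M2.5} the second row is $(0,-1)$. In the first row, $\zeta_B\to1$ locally uniformly and $\xi_1,\xi_2$ decay exponentially, so $\<\xi_i,\zeta_B\eta_1\>=\delta_{i1}+O(B^{-1})$. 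The matrix is thus $O(B^{-1})$-close to $\mathrm{diag}(1,-1)$, uniformly for $B\ge B_0$ and for $c$ in a compact neighbourhood of $c_0$.

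\textbf{Main obstacle: uniformity in $B$.} The $D$-derivative involves $\<u,\partial_x(\zeta_B\eta_1)\>$, and $\|\zeta_B\eta_1\|_{L^2}\sim B^{1/2}$ because $\eta_1$ tends to a constant on a region of length about $B$. So the derivative of $\mathcal F$ in $u$ has norm of size $B^{1/2}$. I would run a quantitative inverse function argument, i.e.\ a contraction on a ball in $(c,D)$, for the map $(c,D)\mapsto\mathcal F$ at fixed $u$.
\begin{itemize}
\item The second derivatives in $(c,D)$ are bounded by $CB^{1/2}$. Hence the Jacobian stays invertible on a ball of radius $r\sim B^{-1/2}$, provided $\epsilon_0\ll B_0^{-3/2}$.
\item The residual $|\mathcal F(u,c_0,D_0)|\lesssim B^{1/2}\epsilon_0$ for $u\in e^{-D_0\partial_x}D_{H^1}(Q_{c_0},\epsilon_0)$.
\item Each row has its own scale. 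The $\eta_2$-row residual is $O(\epsilon_0)$, which gives $|c-c_0|\le K\epsilon_0$. The $\zeta_B\eta_1$-row residual is $O(B^{1/2}\epsilon_0)$, which gives $|D-D_0|\le KB^{1/2}\epsilon_0$.
\end{itemize}
Together these produce \eqref{eq:mod2}.

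\textbf{Conclusion.} The implicit function theorem gives local uniqueness, and uniqueness in this quantitative ball gives $C^1$ regularity. Uniqueness also yields \eqref{eq:equivariance}:
\begin{itemize}
\item $(c_1,0)$ solves $\mathcal F(Q_{c_1},\cdot,\cdot)=0$, so $c(Q_{c_1})=c_1$ and $D(Q_{c_1})=0$.
\item If $(c,D)$ solves for $u$, then $(c,D+D_0)$ solves for $e^{-D_0\partial_x}u$.
\end{itemize}
Uniqueness in the ball also guarantees that the local solutions obtained for different $x_0$ glue consistently to a well-defined $C^1$ map on the union $\mathcal U(c_0,\epsilon_0)$. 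Finally, shrinking $\epsilon_0$ so that $K\epsilon_0\le r_0$ places $c(u)$ in $J_0$, as stated at the end of Lemma \ref{lem:mod1}.
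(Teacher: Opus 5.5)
Your proposal is correct and takes the same route as the paper, which only invokes a standard implicit function theorem argument (the same as for the analogous gKdV modulation lemma); your quantitative version tracks the $B^{1/2}$ size of $\|\zeta_B\eta_1[c]\|_{L^2}$ row by row, and this is exactly what produces the weight $B^{-1/2}$ on $|D(u)-D_0|$ in \eqref{eq:mod2}. One precision: the condition that keeps the $O(B^{1/2}\epsilon_0)$ displacement in $D$ inside your invertibility ball of radius $\sim B^{-1/2}$, and that makes the gluing over different $x_0$ consistent, is $B\epsilon_0\ll 1$, which comes from the hypothesis $\epsilon_0^{-1}\gg B$ rather than from $\epsilon_0\ll B_0^{-3/2}$.
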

  \begin{proof}
The proof follows from a standard application of the implicit function theorem and is the same as the proof of the analogous result  in \cite{CM25D1}.

\end{proof}

Notice that since $(u\to  \partial_x  R_{-\partial _x ^2} (-1)f(u) ) $ is a locally Lipschitz map in $H^1(\R )$, equation \eqref {eq:BBM2}  is a classical ODE in $H^1(\R )$ and that, in particular, solutions satisfy $   u\in C^1\left (   \R , H^1(\R )\right )$. By the orbital stability, we can assume    $u(t)\in \mathcal{U} (c _0 , \epsilon  ) $ 
for all times $t\in \R$. 
Then the two $C^1$ functions
$(c(t),D(t))=(c(u(t)),D(u(t)))$ remain defined,
with $c(t)\in J_0$ for all $t\in\R$.
Inserting in \eqref{eq:BBM1} the ansatz 
$u=Q_c(x-D)+v(x-D)$ after cancellations we obtain 
\small
\begin{align}      &
    \dot v   =\mathcal L_{[c]}v  - R_{-\partial _x ^2} (-1) \partial _x    \left ( f( Q   _{c} + v  )  -  f( Q  _{c} )-  f'(Q  _{c} ) v\right )  
    +   (\dot D -c )   (   \xi _1[c]+   v' )-    \dot c  \xi _2[c]              \label{eq:gKdV3}
\end{align}
\normalsize
or also    
\begin{align}      &
    \dot v   =\mathcal L^{(0)}_{[c]}v  - R_{-\partial _x ^2} (-1) \partial _x    \left ( f( Q   _{c} + v  )  -  f( Q  _{c} ) \right )  
    +   (\dot D -c )     (   \xi _1[c]+   v' )-    \dot c   \xi _2[c]                \label{eq:gKdV3fl}
\end{align}
where, for the multiplication operator $v\to  f'(Q  _{c} )v$,  
\begin{equation}
\mathcal{L}_{[c]}^{(0)} =R_{-\partial _x ^2} (-1)\partial_x (-c\partial_x^2+c-1),
  \text{ and  }
\mathcal L_{[c]} =\mathcal{L}_{[c]}^{(0)}   - R_{-\partial _x ^2} (-1)\partial_x \circ   f'(Q  _{c} )  .
 \label{eq:Lin0}
\end{equation}

\section{Main estimates}\label{sec:mainest}

The proof of the main result is mainly based on the following continuation argument.

\begin{proposition}\label{prop:continuation}
For any small $ \varepsilon >0$ and for large constants $A$, $B$  and  $A_1:= B ^{3/5}$ as in formula \eqref{eq:relABg}
there exists  a    $\epsilon _0= \epsilon _0(c_0, \varepsilon )   $ s.t.\  if
\begin{align}
  \label{eq:main2}
\|v \| _{L^2(I,  { \Sigma }_{1A A_1} )} +
\|v \| _{L^2(I,  { \Sigma }_{2A A_1} )} +  \|v \| _{L^2(I, \widetilde{\Sigma}  )} + \| \dot D - c \| _{L^2(I  )} + \| \dot c \| _{L^2(I  )}\le \varepsilon
\end{align}
holds  for $I=[0,T]$ for some $T>0$ and if  the constant $\epsilon$ in \eqref{eq:orbstab} satisfies  $\epsilon  \in (0, \epsilon _0)$,
then in fact for $I=[0,T]$    inequality   \eqref{eq:main2} holds   for   $\varepsilon$ replaced by $   o _{B ^{-1} }(1) \varepsilon $.
\end{proposition}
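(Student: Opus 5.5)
The plan follows the virial--Kato scheme of \cite{CM25D1,KMM2022} and closes the bootstrap \eqref{eq:main2} term by term, in four steps.

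\textbf{Step 1: modulation.} Differentiate the orthogonality conditions \eqref{61} in time, using \eqref{eq:gKdV3} together with \eqref{eq:M2.4}--\eqref{eq:M2.5}. Since $\eta_2[c]$ is exponentially localized, this gives
\begin{align*}
|\dot c|+|\dot D-c|\lesssim \|v\|_{\widetilde\Sigma}\, o_{\epsilon}(1)+ B^{-1/2}\|v\|_{\Sigma_{1AA_1}}+\|v\|_{\Sigma_{1AA_1}}^2 .
\end{align*}
The factor $B^{-1/2}$ comes from the commutator with $\zeta_B$ in the $\eta_1$ condition, since $\eta_1$ is not localized (it contains $\int_{-\infty}^x\partial_cQ_c$). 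The $L^2(I)$ norms of $\dot c$ and $\dot D-c$ are then $o_{B^{-1}}(1)\varepsilon$, provided the $v$ terms are.

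\textbf{Step 2: high-energy virial.} Use the functional $\mathcal I=\frac12\langle v,\varphi_{A}(1-\partial_x^2)v\rangle$, split with the partition $\vartheta_{iA_1}$ as in \eqref{def:phiAi}, i.e.\ with $\varphi_{iAA_1}$. Away from the soliton the equation is $\dot v\approx -\partial_x(1-\partial_x^2)^{-1}(v+f(v))$ in the moving frame of speed $c>1$. Commuting gives a coercive term $\gtrsim \|v\|_{\Sigma_{2AA_1}}^2$ from the transport: the relative velocity $c-1$ together with dispersion yields positivity. Error terms are localized where $Q_c$ lives or come from derivatives of $\zeta_A$, which are $O(A^{-1})$. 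The result is
\begin{align*}
\|v\|_{L^2(I,\Sigma_{1AA_1})}+\|v\|_{L^2(I,\Sigma_{2AA_1})}\lesssim A^{1/2}\epsilon + \|v\|_{L^2(I,\widetilde\Sigma)}+ \text{(modulation terms)}.
\end{align*}
The boundary term is bounded by $A\epsilon^2$ through orbital stability, and $\epsilon\ll\varepsilon$ by \eqref{eq:relABg}. The nonlinear terms for $1<p\le2$ are handled by $|f(Q+v)-f(Q)-f'(Q)v|\lesssim |v|^p$ and Sobolev.

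\textbf{Step 3: Kato smoothing (main obstacle).} The weighted $\widetilde\Sigma$ norm, i.e.\ the part near the soliton, cannot be controlled by the virial inequality because of the $f'(Q_c)$ term. Instead, project $v$ onto the continuous spectral subspace with respect to the Riesz projection defined by $\xi_j,\eta_j$ (Lemma~\ref{lem:kernel}). Then write Duhamel for \eqref{eq:gKdV3fl} with $\mathcal L_{[c_0]}$ frozen, treating $(c-c_0)$ perturbatively, which is $o_\epsilon(1)$. The needed estimate is
\begin{align*}
\sup_{\lambda\in\im\R}\|\sech(\sqrt\gamma\kappa x) R_{\mathcal L_{[c]}}(\lambda)P_c\,\sech(\sqrt\gamma\kappa x)\|_{\mathcal L(L^2)}<\infty ,
\end{align*}
together with $\mathcal L_{[c]}$ limiting absorption at $\lambda\to 0$ after projection. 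This gives the standard Kato $L^2_tL^2_x$ bounds for both the homogeneous and the inhomogeneous terms.

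The resolvent bound is derived directly: invert $\partial_x L_c-\lambda(1-\partial_x^2)$ as a third-order ODE with explicit characteristic roots (the roots lemma~\ref{lem:roots1} with $\kappa_c$). The free resolvent is bounded on exponential weights, and a Fredholm argument then works: its compactness fails only at eigenvalues or resonances. Proposition~\ref{prop:noembeig} excludes embedded eigenvalues, and the generalized kernel is exactly $\Span\{\xi_1,\xi_2\}$ by Lemma~\ref{lem:kernel}. Large $|\lambda|$ is handled by the decay of the perturbation $\lambda^{-1}\partial_x f'(Q_c)$ relative to $(1-\partial_x^2)$. The forcing terms in Duhamel are either localized nonlinear terms, bounded by $\|v\|_{\Sigma_{1AA_1}}\,o_\epsilon(1)$, or modulation terms. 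The only dangerous inputs are terms like $\dot D\,v'$, which is why a smoothing estimate rather than plain boundedness is needed. This yields
\begin{align*}
\|v\|_{L^2(I,\widetilde\Sigma)}\lesssim \epsilon + o_\epsilon(1)\varepsilon + B^{-1/2}\|v\|_{L^2(I,\Sigma_{1AA_1})}.
\end{align*}
The interplay of the weights is essential here: $\widetilde\Sigma$ decays faster than $\zeta_A$, so it is comparable to $\Sigma_{1AA_1}$ on the soliton region but costs nothing far away.

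\textbf{Step 4: closing.} Combining the three inequalities, the constants satisfy \eqref{eq:relABg} and $A_1=B^{3/5}$ sits between the scales. Every right-hand side is then $o_{B^{-1}}(1)\varepsilon$, which gives the improved bound claimed in Proposition~\ref{prop:continuation}.
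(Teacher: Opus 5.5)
\textbf{The gap is in Step 3, and it carries over into Steps 2 and 4.}

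You run Duhamel and Kato smoothing on $v$ itself, and this does not work, for two reasons.
\begin{enumerate}
\item The Riesz projection is not defined on $v\in H^1$. The function $\eta_1[c]$ contains $\int_{-\infty}^x\partial_cQ_c$ and in general does not decay as $x\to+\infty$. You note this yourself in Step 1, and it is exactly why the modulation condition is $\langle v,\zeta_B\eta_1[c]\rangle=0$ rather than $\langle v,\eta_1[c]\rangle=0$.
\item The Duhamel forcing is not spatially localized. Besides $(\dot D-c)v'$, freezing $c_0$ produces the term $(c-c_0)v'$, because $\mathcal L_{[c]}-\mathcal L_{[c_0]}=(c-c_0)\partial_x$ plus a localized operator. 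Neither term lies in $L^2_tL^{2,\sigma+1}_x$. Still less do they lie in $L^2_t$ of $\{F:\cosh(\sqrt\gamma\kappa x)F\in L^2\}$, which is what your two-sided $\sech$ resolvent bound requires of the input. The term $(c-c_0)v'$ is not even in $L^2_tL^2_x$.
\end{enumerate}
So the bound $\|v\|_{L^2(I,\widetilde\Sigma)}\lesssim\epsilon+o_\epsilon(1)\varepsilon+B^{-1/2}\|v\|_{L^2(I,\Sigma_{1AA_1})}$ has no derivation. A correct, localized version of the argument gives a quite different bound, described next.

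\textbf{How the paper proceeds.} It works with $\zeta_Bv=\mathbf P_{[c_0]}\zeta_Bv+w$, see \eqref{eq:defw}. The discrete part is $O(B^{-1})$ by the orthogonality condition above. All forcing in the $w$ equation is localized, but only at scale $B$, i.e.\ like $e^{-|x|/B}$. Hence the smoothing estimate needed goes from polynomial weights $L^{2,\sigma+1}$ into $\widetilde\Sigma$ (Propositions~\ref{prop:smooth} and \ref{prop:smooth2}), not from $\sech$ weights.

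The price is the commutator $\mathbf Q_{[c_0]}[\zeta_B,\mathcal L_{[c_0]}]v$. Its contribution is only bounded by $A_1^{\sigma+1}B^{-1}\|v\|_{L^2(I,\Sigma_{1AA_1})}+B^{\sigma}\|v\|_{L^2(I,\Sigma_{2AA_1})}$ (Lemma~\ref{lem:gKdV21hard}). Note the \emph{large} factor $B^\sigma$ on the far-field norm.

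This term can only be absorbed because of the asymmetric virial estimate \eqref{eq:sec:1virial12}. There, $\|v\|_{L^2(I,\widetilde\Sigma)}$ and $\|v\|_{L^2(I,\Sigma_{1AA_1})}$ enter the bound for $\|v\|_{L^2(I,\Sigma_{2AA_1})}$ only with the small factor $A_1^{-1}$, since $\varphi_{2AA_1}$ is supported away from the soliton. With $A_1=B^{3/5}$ and $1/2<\sigma<3/5$, both $A_1^{\sigma+1}B^{-1}$ and $B^{\sigma}A_1^{-1}$ are negative powers of $B$.

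\textbf{Why your Steps 2 and 4 cannot close even after localizing.} Your Step 2 bounds the $\Sigma_{1AA_1}$ and $\Sigma_{2AA_1}$ norms together by $\|v\|_{L^2(I,\widetilde\Sigma)}$ with an $O(1)$ constant. The loop $\|v\|_{\widetilde\Sigma}\lesssim B^\sigma\|v\|_{\Sigma_{2AA_1}}\lesssim B^\sigma\|v\|_{\widetilde\Sigma}$ then cannot close. Correspondingly, $A_1$ never enters your inequalities, so the choice $A_1=B^{3/5}$ in Step 4 plays no role.

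A minor point on Step 1: the $\zeta_B$-commutator sees $|x|\lesssim B\gg A_1$, so $\|v\|_{\Sigma_{2AA_1}}$ must also appear on the right-hand side, as in Lemma~\ref{lem:lemdscrt}. This omission is harmless.
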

\begin{proof}
    The proof follows, by exactly the same argument as in   \cite{CM25D1} from the next three lemmas.
\end{proof}

\begin{lemma}\label{lem:lemdscrt} We have the estimates
 \begin{align} \label{eq:discrest1}
   & |\dot c    |   +   |\dot D -c |      \lesssim  B ^{-1}    \left (  \|v \| _{    \Sigma  _{1A A_1}  } +
\|v \| _{     \Sigma  _{2A A_1} } \right )   .
\end{align}

\end{lemma}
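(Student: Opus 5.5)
The plan is to differentiate in time the two modulation orthogonality conditions in \eqref{61}, $\<v,\zeta_B\eta_1[c]\>=0$ and $\<v,\eta_2[c]\>=0$, substitute the equation \eqref{eq:gKdV3} for $\dot v$, and read off a $2\times 2$ linear system for $(\dot D-c,\dot c)$. This system is a small perturbation of the identity.

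\emph{Step 1: differentiate the conditions.} Differentiating gives
\begin{align*}
0=\<\dot v,\zeta_B\eta_1[c]\>+\dot c\,\<v,\zeta_B\partial_c\eta_1[c]\>,\qquad 0=\<\dot v,\eta_2[c]\>+\dot c\,\<v,\partial_c\eta_2[c]\>.
\end{align*}
Next insert \eqref{eq:gKdV3}. The modulation terms contribute
\begin{align*}
(\dot D-c)\<\xi_1+v',\zeta_B\eta_j\>-\dot c\,\<\xi_2,\zeta_B\eta_j\>
\end{align*}
(with $\zeta_B$ absent for $j=2$). By \eqref{eq:M2.5}, $\<\xi_i,\eta_j\>=\delta_{ij}$. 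Since $\eta_1$ decays exponentially (for $p$ close to $1$ the primitive in \eqref{eq:ker2} needs checking, but $\theta_2$ was chosen precisely to handle this), we get $\<\xi_i,(1-\zeta_B)\eta_1\>=O(e^{-\kappa B})$. All terms involving $v$, such as $\<v',\zeta_B\eta_j\>$ and $\<v,\zeta_B\partial_c\eta_j\>$, are $O(\epsilon)$ by orbital stability. So the coefficient matrix is $\mathrm{diag}(1,-1)+O(\epsilon+e^{-\kappa B})$, which is invertible.

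\emph{Step 2: the linear term.} For the second equation, $\<\mathcal L_{[c]}v,\eta_2\>=\<v,\mathcal L^*_{[c]}\eta_2\>=0$. For the first, write
\begin{align*}
\<\mathcal L_{[c]}v,\zeta_B\eta_1\>=\<v,\mathcal L^*_{[c]}(\zeta_B\eta_1)\>=-\<v,\zeta_B\eta_2\>+\<v,[\mathcal L^*_{[c]},\zeta_B]\eta_1\>.
\end{align*}
The term $\<v,\zeta_B\eta_2\>=-\<v,(1-\zeta_B)\eta_2\>$ is bounded by $e^{-\kappa B}$ times a weighted norm of $v$. The commutator involves $\zeta_B'=O(B^{-1})$; the nonlocal factor $(1-\partial_x^2)^{-1}$ must be commuted through, which costs a kernel $e^{-|x-y|}$. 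The result is $O(B^{-1})$ times a pairing of $v$ against exponentially decaying functions.

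\emph{Step 3: the nonlinear term.} Move $R_{-\partial_x^2}(-1)\partial_x$ onto the test functions. Then pair $f(Q_c+v)-f(Q_c)-f'(Q_c)v$, which is $O(|v|^{\min(p,2)})$, or $O(Q_c^{p-2}v^2)$ when $p>2$, against exponentially decaying functions. Using $\|v\|_{L^\infty}\lesssim\epsilon$, this gives $\epsilon^{\min(p-1,1)}$ times a weighted norm of $v$.

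\emph{Step 4: conversion to the $\Sigma$ seminorms.} Every pairing above has the form $\<v,g\>$ with $|g|\lesssim e^{-\kappa_0|x|}$ and $\kappa_0$ independent of $A$, $B$. Since $\zeta_A\ge e^{-|x|/A}$ and $\vartheta_{1A_1}^2+\vartheta_{2A_1}^2\gtrsim1$, and $A\gg1$, we get $|\<v,g\>|\lesssim\|v\|_{\Sigma_{1AA_1}}+\|v\|_{\Sigma_{2AA_1}}$. Inverting the matrix and using $\epsilon\ll B^{-1}$ and $e^{-\kappa B}\ll B^{-1}$ gives \eqref{eq:discrest1}.

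I expect the main obstacle to be the commutator $[\mathcal L^*_{[c]},\zeta_B]\eta_1$. The factor $\zeta_B$ decays only at rate $1/B$, so I must verify that the product of $\zeta_B'$ with $\eta_1$, after the nonlocal operator acts, still decays at a fixed rate $\kappa_0$. This requires exponential decay of $\eta_1$ itself, and hence the choice of $\theta_2$ together with a zero-mean or decay check on the primitive $\int_{-\infty}^x\partial_cQ_c$.
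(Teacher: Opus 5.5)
Your overall scheme is the natural one: differentiate both conditions in \eqref{61}, insert \eqref{eq:gKdV3}, use $\mathcal L^*_{[c]}\eta_2=0$ and $\mathcal L^*_{[c]}\eta_1=-\eta_2$, and invert a near-diagonal $2\times 2$ system. The paper defers this proof to \cite{CM25D1}, so I am judging your argument on its own terms. There is a genuine gap, exactly at the point you flagged: $\eta_1[c]$ does \emph{not} decay.

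Since $\int_{-\infty}^x\partial_cQ_c\,dy\to\partial_c\|Q_c\|_{L^1(\R)}$ as $x\to+\infty$, you get $\eta_1[c](x)\to\eta_\infty:=-\theta_1\partial_c\|Q_c\|_{L^1(\R)}$. This limit is nonzero in general; for instance, by the scaling in \eqref{eq:QQ1}, it is nonzero for every $1<p\le 3$. The term $\theta_2(1-\partial_x^2)Q_c$ decays and cannot cancel a constant; $\theta_2$ was chosen only to enforce $\langle\xi_2,\eta_1\rangle=0$. This nondecay is the very reason the weight $\zeta_B$ appears in \eqref{61}.

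Consequently the premise of your Step 4 fails for the key linear term. Up to $O(B^{-1}e^{-\kappa|x|})$ and $O(B^{-3}\zeta_B)$ errors one has $[\mathcal L^*_{[c]},\zeta_B]\eta_1=(c-1)\eta_\infty B^{-1}\zeta_B1_{\R_+}$, which decays only at rate $1/B$. If you bound $B^{-1}\int_0^\infty|v|\zeta_B\,dx$ by the $\Sigma$ seminorms through Cauchy--Schwarz, you pay $\|\zeta_B\zeta_A^{-1}\|_{L^2(\R_+)}\sim B^{1/2}$. So, once $\eta_1$ is treated correctly, your argument yields only $B^{-1/2}$ in \eqref{eq:discrest1}.

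The missing idea is to reuse the first orthogonality condition. Write $\eta_1=\eta_\infty1_{\R_+}+\widetilde\eta$ with $|\widetilde\eta|\lesssim e^{-\kappa|x|}$. Then $\langle v,\zeta_B\eta_1\rangle=0$ gives
\[
\eta_\infty\langle v,\zeta_B1_{\R_+}\rangle=-\langle v,\zeta_B\widetilde\eta\rangle ,
\]
which is a pairing against an exponentially localized function. Hence the tail contributes $O(B^{-1})(\|v\|_{\Sigma_{1AA_1}}+\|v\|_{\Sigma_{2AA_1}})$, and the $O(B^{-3}\zeta_B)$ remainder costs only $B^{-5/2}$.

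The other places where the nondecay of $\eta_1$ enters are harmless but should be stated correctly. The matrix entries $\langle v',\zeta_B\eta_1\rangle$ and $\langle v,\zeta_B\partial_c\eta_1\rangle$ are $O(B^{1/2}\|v\|_{H^1})$, not $O(\epsilon)$. The nonlinear term sees the tail $\partial_xR_{-\partial_x^2}(-1)(\zeta_B\eta_1)=O(B^{-1}\zeta_B)$ on $\R_+$, costing a positive power of $\|v\|_{L^\infty}$ times $B^{-1/2}$. Both are acceptable under \eqref{eq:relABg} and $\epsilon_0\ll B_0^{-3/2}$.

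Finally, $\zeta_B$ is an exponential weight, not a cutoff equal to $1$ on $[-B,B]$: $1-\zeta_B\approx|x|/B$ for $2\le|x|\ll B$. So $\langle\xi_i,(1-\zeta_B)\eta_1\rangle$ and $\langle v,(1-\zeta_B)\eta_2\rangle$ are $O(B^{-1})$, not $O(e^{-\kappa B})$. This does not affect invertibility of the system, and the second term is in fact one of the genuine sources of the factor $B^{-1}$.
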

\begin{proof}
    The proof is almost verbatim the same as the analogous one in \cite{CM25D1} and is skipped. 
\end{proof}

\begin{lemma}[Virial Inequality]\label{prop:1virial}
There exists a fixed constant $C_{c_0}>0$ such that
\begin{align}
\label{eq:sec:1virial11}
\|v\|_{L^2(I,\Sigma_{1AA_1})}
&\leq
C_{c_0}
\left(
\epsilon ^{1/3}
+
\|v\|_{L^2(I,\widetilde{\Sigma})}
+
A_1^{-1}
\|v\|_{L^2(I,\Sigma_{2AA_1})}
\right),
\\
\label{eq:sec:1virial12}
\|v\|_{L^2(I,\Sigma_{2AA_1})}
&\leq
C_{c_0}
\left(
\epsilon ^{1/3}
+
A_1^{-1}\|v\|_{L^2(I,\widetilde{\Sigma})}
+
A_1^{-1}
\|v\|_{L^2(I,\Sigma_{1AA_1})}
\right).
\end{align}
\end{lemma}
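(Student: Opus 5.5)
We will prove the two inequalities by a localized virial argument in the spirit of Kowalczyk et al. \cite{KMM2022} and of \cite{CM25D1}. The functional is built from the invariant mass density. Since the BBM flow preserves $M$, the natural virial functional is
\[
\mathcal I_i(v):=\tfrac12\<\varphi_{iAA_1}\,(1-\partial_x^2)v,v\>\quad\text{or equivalently}\quad \tfrac12\int \varphi_{iAA_1}\,(v^2+(v')^2)\,dx ,
\]
suitably symmetrized. Here $\varphi_{iAA_1}$ are the functions in \eqref{def:phiAi}. Because $|\varphi_{iAA_1}|\lesssim A$ and $\|v\|_{H^1}\lesssim\epsilon$ by \eqref{eq:orbstab}, we have $|\mathcal I_i|\lesssim A\epsilon^2$. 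Integrating $\dot{\mathcal I}_i$ over $I$ and using the relation \eqref{eq:relABg} between $\log\epsilon^{-1}$ and $A$, the boundary terms are then $\lesssim \epsilon^{2/3}$, which produces the $\epsilon^{1/3}$ in \eqref{eq:sec:1virial11}--\eqref{eq:sec:1virial12}.

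The core step is the computation of $\dot{\mathcal I}_i$ using \eqref{eq:gKdV3fl}. The transport term $\dot D\,v'$ combines with the $c\,v'$ part hidden in $\mathcal L^{(0)}_{[c]}$, since $(1-\partial_x^2)^{-1}\partial_x(-c\partial_x^2+c-1)= -c\,\partial_x + \partial_x(1-\partial_x^2)^{-1}$ up to sign conventions. After integration by parts this yields the positive quadratic form
\[
\tfrac{c}{2}\int \varphi_{iAA_1}'\,(v^2+ (v')^2)\,dx\;-\;\tfrac12\int\varphi_{iAA_1}'\,\big((1-\partial_x^2)^{-1}v\big)\cdots ,
\]
the last term coming from the dispersive part $\partial_x(1-\partial_x^2)^{-1}$.

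This dispersive term is coercive relative to the first because $c>1$. Precisely, the symbol of $(1-\partial_x^2)^{-1}$ is at most $1$, so the combination is bounded below by $(c-1)\int\varphi'(v^2+(v')^2)$, up to errors. Those errors come from commuting $\varphi'=\zeta_A^2\vartheta_{iA_1}^2$ through $(1-\partial_x^2)^{-1}$. They are controlled by $A^{-1}+A_1^{-1}$ times the weighted norms, because $|\varphi''|\lesssim (A^{-1}+A_1^{-1})\varphi'$ modulo the cutoff region. In this way $\dot{\mathcal I}_i\gtrsim \|v\|_{\Sigma_{iAA_1}}^2$ minus error terms.

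The error terms are as follows.
\begin{enumerate}
\item Terms containing $f'(Q_c)v$ and the nonlinear terms. For $i=2$ these are supported where $|x|\gtrsim A_1$, where $Q_c$ is exponentially small, so they are bounded by $e^{-\kappa A_1}\|v\|_{\widetilde\Sigma}^2$ plus $\epsilon$ times the coercive term. For $i=1$ they are bounded by $C\|v\|^2_{\widetilde\Sigma}$. For $1<p\le2$ one needs $|f(Q+v)-f(Q)-f'(Q)v|\lesssim |v|^p$ together with $\|v\|_{L^\infty}\lesssim\epsilon$.
\item Cross terms from the cutoffs $\vartheta_{iA_1}'$. These are localized to $A_1\le|x|\le 2A_1$, have size $A_1^{-1}$, and are bounded by $A_1^{-1}\|v\|_{\Sigma_{jAA_1}}\|v\|_{\Sigma_{iAA_1}}$ with $j\ne i$.
\item Modulation terms $(\dot D-c)\xi_1$ and $\dot c\,\xi_2$. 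These are bounded via Lemma \ref{lem:lemdscrt} by $B^{-1}$ times the sum of the weighted norms.
\end{enumerate}
Integrating in time and applying Cauchy--Schwarz then gives both inequalities.

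The main obstacle is the coercivity of the dispersive part, namely showing that the nonlocal operator $(1-\partial_x^2)^{-1}$ commuted with the weight $\varphi'$ leaves a remainder controlled by $\|v\|_{\Sigma_{iAA_1}}$ rather than by unweighted norms. I would first handle this using the explicit kernel $\tfrac12 e^{-|x-y|}$ of $(1-\partial_x^2)^{-1}$. Since $\zeta_A$ varies on scale $A\gg1$, we have $\zeta_A(x)/\zeta_A(y)\le e^{|x-y|/A}$, which is integrable against $e^{-|x-y|}$. This gives $\zeta_A\vartheta_{iA_1}(1-\partial_x^2)^{-1}v=(1-\partial_x^2)^{-1}(\zeta_A\vartheta_{iA_1}v)+O(A^{-1}+A_1^{-1})$ in the appropriate norm.
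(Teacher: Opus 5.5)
Your plan follows the same architecture as the paper's proof:
\begin{itemize}
\item the functionals $\int\varphi_{iAA_1}(v^2+(v')^2)\,dx$;
\item coercivity of the free part from $c>1$ and $\|R_{-\partial_x^2}(-1)\|_{\mathcal L(L^2)}\le 1$, where the dispersive contribution is exactly $2\int\varphi_{iAA_1}'\,vR_{-\partial_x^2}(-1)v\,dx-\int\varphi_{iAA_1}'v^2\,dx$;
\item the commutator bound through the kernel $\frac12e^{-|x-y|}$;
\item Lemma \ref{lem:lemdscrt} for the modulation terms;
\item \eqref{eq:relABg} for the boundary terms. Here $\|v\|_{H^1}\lesssim B^{1/2}\epsilon$ rather than $\epsilon$, because of the $\zeta_B$ in the orthogonality condition, but this is harmless.
\end{itemize}
Closing the $i=1$ and $i=2$ inequalities separately, with Cauchy--Schwarz in time, is fine and gives exactly the stated form. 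The paper instead works with $\mathcal I_1+\kappa\mathcal I_2$, which even removes the $A_1^{-1}\|v\|_{L^2(I,\Sigma_{2AA_1})}$ term from the first inequality.

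\textbf{The gap: the nonlinear term for $1<p<2$.} This is the delicate case, and your tools do not handle it.
\begin{itemize}
\item Using $\partial_x^2R_{-\partial_x^2}(-1)=R_{-\partial_x^2}(-1)-1$ in the $v'$--part of $\frac{d}{dt}\mathcal I_i$ produces the local term $2\int\varphi_{iAA_1}v'\,(f(Q_c+v)-f(Q_c))\,dx$. Its weight is $\varphi_{iAA_1}$, not $\varphi_{iAA_1}'$. Since $\varphi_{iAA_1}$ does not decay (far from the soliton the integrand is essentially $\varphi_{iAA_1}v'f(v)$), this term is not bounded by any of the norms in the lemma.
\item It must be localized through the exact-derivative identity $v'(f(Q_c+v)-f(Q_c))=\partial_x\bigl(F(Q_c+v)-F(Q_c)-f(Q_c)v\bigr)-Q_c'\bigl(f(Q_c+v)-f(Q_c)-f'(Q_c)v\bigr)$. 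This leaves $\int\varphi_{iAA_1}Q_c'\,\bigl(f(Q_c+v)-f(Q_c)-f'(Q_c)v\bigr)\,dx$.
\item In this term the remainder is not multiplied by an extra factor of $v$. Your two tools, $|f(Q_c+v)-f(Q_c)-f'(Q_c)v|\lesssim|v|^p$ and smallness of $\|v\|_{L^\infty}$, therefore only give $\int Q_c|v|^p$.
\item That bound is sub-quadratic. Since $|v|^p=|v|^{p-2}v^2$ and $|v|^{p-2}$ is large where $v$ is small, smallness of $\|v\|_{L^\infty}$ works against you when $p<2$. So $\int Q_c|v|^p$ cannot be bounded by $\|v\|_{\widetilde{\Sigma}}^2$, nor by a small multiple of $\|v\|_{\Sigma_{iAA_1}}^2$.
\end{itemize}

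\textbf{The missing idea} is the pointwise splitting used in the paper.
\begin{itemize}
\item On $\{2|v|\le Q_c\}$ the remainder is $\lesssim Q_c^{p-2}v^2$. With $|Q_c'|\lesssim Q_c$ the integrand is $\lesssim Q_c^{p-1}v^2$, which goes into $\|v\|_{\widetilde{\Sigma}}^2$.
\item On the complement, $Q_c\lesssim|v|$, so $Q_c|v|^p\lesssim Q_c^{\frac{p-1}2}|v|^{\frac{p-1}2}v^2$. This is a small multiple of $\|v\|_{\Sigma_{1AA_1}}^2+\|v\|_{\Sigma_{2AA_1}}^2$.
\end{itemize}
With this step, your item 1 holds in corrected form. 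For $i=1$ the bound is $C\|v\|_{\widetilde{\Sigma}}^2$ plus small multiples of the coercive norms, not $C\|v\|_{\widetilde{\Sigma}}^2$ alone. The rest of your argument then closes.
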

We prove Lemma \ref{prop:1virial} in Sec. \ref{sec:virial}. Finally, the following follows from the material from  \S \ref{sec:disp} on.

\begin{lemma}[Smoothing Inequality]\label{prop:smooth11}
We have
\begin{align}
\label{eq:sec:smooth11}
\|v\|_{L^2(I,\widetilde{\Sigma})}
\leq
o_{B^{-1 }}(1)
\left(
\|v\|_{L^2(I,\Sigma_{1AA_1})}
+
\|v\|_{L^2(I,\Sigma_{2AA_1})}
\right)
+
o_{B^{-1}}(1)\varepsilon .
\end{align}
\end{lemma}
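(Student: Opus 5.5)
We prove Lemma \ref{prop:smooth11} by the virial--Kato strategy: we use Kato smoothing for the linearized flow $e^{t\mathcal L_{[c]}}$ restricted to the continuous spectrum, and treat every term that breaks the virial estimates as a forcing term that is small in the weighted norms.

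\emph{Step 1: project and freeze the velocity.} Fix $c_1:=c(T)$ (or any fixed value in $J_0$). Rewrite \eqref{eq:gKdV3} as
\begin{equation*}
\dot v=\mathcal L_{[c_1]}v+\mathcal R,
\end{equation*}
where $\mathcal R$ collects four groups of terms:
\begin{itemize}[label={}]
\item[(i)] the error $(\mathcal L_{[c]}-\mathcal L_{[c_1]})v$, whose potential term is bounded by $|c-c_1|$ times a localized operator;
\item[(ii)] the nonlinear term $R_{-\partial_x^2}(-1)\partial_x(f(Q_c+v)-f(Q_c)-f'(Q_c)v)$;
\item[(iii)] the modulation terms $(\dot D-c)(\xi_1+v')-\dot c\,\xi_2$;
\item[(iv)] nothing else.
\end{itemize}
Let $P_c$ be the Riesz projection onto the continuous spectrum, i.e.\ the complement of $N_g(\mathcal L_{[c_1]})$, built from $\xi_j,\eta_j$ in Lemma \ref{lem:kernel}. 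Set $w:=P_cv$. The modulation conditions \eqref{61} say that $\langle v,\eta_2\rangle=0$ and $\langle v,\zeta_B\eta_1\rangle=0$. Hence the discrete part $\langle v,\eta_1\rangle\xi_1$ is controlled by $\langle v,(1-\zeta_B)\eta_1\rangle$. Since $\eta_1$ decays exponentially and $1-\zeta_B$ vanishes on $[-B,B]$, this quantity is $\lesssim e^{-\kappa B}$ times the $\Sigma_{2AA_1}$ norm. We also get an extra error from $c\neq c_1$, which is $o(1)$ by orbital stability. So it suffices to bound $\|w\|_{L^2(I,\widetilde\Sigma)}$.

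\emph{Step 2: Kato smoothing.} By Proposition \ref{prop:noembeig} and Lemma \ref{lem:kernel}, the only spectrum of $\mathcal L_{[c_1]}$ in $\overline{\C_+}$ besides $\im\R$ is $0$, with the Jordan block of size two. Since there are no embedded eigenvalues, we want to establish the resolvent bound
\begin{equation*}
\sup_{z\in\C_+}\|\sech(\sqrt\gamma\kappa x)\,R_{\mathcal L_{[c_1]}}(z)P_c\,\sech(\sqrt\gamma\kappa x)\|_{\mathcal L(L^2)}<\infty
\end{equation*}
(and the analogous bound with $\widehat\Sigma$ weights). We do this directly:
\begin{itemize}[label={}]
\item[(a)] Solve the third order ODE $(\partial_xL_{c_1}-z(1-\partial_x^2))u=(1-\partial_x^2)g$. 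Note $\gamma$ enters because the roots of the constant-coefficient symbol near $\im\R$ are governed by $\sqrt{\gamma}$, cf.\ Lemma \ref{lem:roots1}.
\item[(b)] For $|z|$ large, use a perturbative bound of the free resolvent $R_{\mathcal L^{(0)}}$ in weighted spaces, where the potential term is relatively small.
\item[(c)] For $z$ in compact sets, use analytic Fredholm theory. Here absence of embedded eigenvalues and resonances (Proposition \ref{prop:noembeig}) gives invertibility on $\im\R\setminus\{0\}$, and $P_c$ removes the singularity at $0$.
\end{itemize}
By Kato's theorem, this bound yields
\begin{equation*}
\|\sech(\cdot)\,e^{t\mathcal L}P_c w_0\|_{L^2_tL^2_x}\lesssim\|w_0\|_{L^2}.
\end{equation*}
Together with Duhamel and the Kato--Yajima/Christ--Kiselev type $TT^*$ argument, it also gives the inhomogeneous bound
\begin{equation*}
\|w\|_{L^2(I,\widetilde\Sigma)}\lesssim\|w(0)\|_{L^2}+\|\cosh(\kappa x)\,\mathcal R\|_{L^2(I,L^2)}.
\end{equation*}
A variant of this with the derivative structure $R_{-\partial_x^2}(-1)\partial_x$ absorbed is also needed.

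\emph{Step 3: estimate the forcing.} The initial term contributes $\lesssim\delta\ll\varepsilon$.
\begin{itemize}[label={}]
\item[(i)] The term $(\dot D-c)v'$ is not localized. Handle it by commuting it into the weight via a Kato-smoothing estimate for $\partial_x$, or by using the localized weight $\zeta_A$. Lemma \ref{lem:lemdscrt} then gives a factor $B^{-1}(\|v\|_{\Sigma_1}+\|v\|_{\Sigma_2})$ times $\|v\|_{L^\infty H^1}\lesssim\epsilon$.
\item[(ii)] The localized modulation terms $\xi_j$ are annihilated by $P_c$ up to the $c-c_1$ error.
\item[(iii)] The nonlinear term is localized by $Q_c^{p-2}$-type factors. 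For $1<p\le2$, use $|f(Q+v)-f(Q)-f'(Q)v|\lesssim|v|^p$ and interpolate with $\|v\|_{L^\infty}\lesssim\epsilon$. This gives smallness $\epsilon^{p-1}$ times the $\Sigma$ norms, i.e.\ $o_{B^{-1}}(1)$ by \eqref{eq:relABg}.
\item[(iv)] The weight $\cosh(\kappa x)$ versus $\zeta_A$ mismatch is handled since $\zeta_A$ decays only like $e^{-|x|/A}$ and $A\gg1$. The source terms are localized by $e^{-|x|}$-scale functions times $v$, bounded by $\Sigma_{1AA_1}$ near the origin and by $\Sigma_{2AA_1}$ with exponential gain.
\end{itemize}

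\emph{Main obstacle.} The hard step is the uniform weighted resolvent estimate in Step 2 near $z\in\im\R$, especially near $z=0$ and at the threshold. There the operator $\partial_xL_c$ has a nontrivial generalized kernel, and the weighted free resolvent of the third order symbol is delicate. I would first try writing the resolvent explicitly via the characteristic roots of the free symbol together with a Fredholm/Born series. The behavior at $z=0$ would be handled using the biorthogonality \eqref{eq:M2.5}.
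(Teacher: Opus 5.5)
There is a genuine gap. Your plan runs Kato smoothing on $v$ itself, and the paper's key localization step is missing. This fails for three reasons.

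\emph{The projection is undefined on $v$.} In \eqref{eq:ker2}, the term $-\theta_1\int_{-\infty}^x\partial_cQ_c\,dy$ makes $\eta_1[c]$ tend to $-\theta_1\partial_c\|Q_c\|_{L^1}$ as $x\to+\infty$, and this limit is generically nonzero. So $\eta_1\notin L^2$, and $\mathbf P_{[c]}$ only acts on $L^2_a$; this is why \eqref{61} pairs $v$ with $\zeta_B\eta_1$. Your Step 1 bound $\langle v,(1-\zeta_B)\eta_1\rangle\lesssim e^{-\kappa B}\|v\|_{\Sigma_{2AA_1}}$ is therefore false. Also, $1-\zeta_B$ vanishes only on $[-1,1]$ and is of size $|x|/B$ on $[2,B]$, which is why \eqref{eq:disczbv} gives only $B^{-1}$.

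\emph{Several forcing terms are not localized at all.} These are $(\dot D-c)v'$, the transport part $(c-c_1)\partial_xv$ of $\mathcal L_{[c]}-\mathcal L_{[c_1]}$ (recall $\mathcal L^{(0)}_{[c]}=c\partial_x-R_{-\partial_x^2}(-1)\partial_x$), and the part of the nonlinearity where $|v|\gtrsim Q_c$, which behaves like $|v|^p$ and carries no factor of $Q_c$. None of them is controlled in your source norm $\|\cosh(\kappa x)\mathcal R\|_{L^2_tL^2_x}$. Even an $e^{-|x|/A}$-localized source cannot be bounded there by $\Sigma_{iAA_1}$ norms, because $\cosh(\kappa x)\zeta_A$ is unbounded. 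The slow decay of $\zeta_A$ is the obstruction, so your item (iv) runs in the wrong direction.

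\emph{A homogeneous $L^2$ Kato estimate is not available.} $\mathcal L_{[c]}$ is not skew-adjoint and $P_c$ is not bounded on $L^2$. The paper treats the free term $e^{t\mathcal L_{[c_0]}}w(0)$ in $L^2_a$ instead (Theorem \ref{thm:pwdisp}, Lemma \ref{lem:w0}).

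The missing idea is to multiply \eqref{eq:gKdV3} by $\zeta_B$ and apply smoothing to $w:=\mathbf Q_{[c_0]}\zeta_Bv\in L^2_a$, as in \eqref{eq:gKdV14}--\eqref{eq:gKdV24}. Every forcing term then carries the factor $\zeta_B$. The estimate used is Proposition \ref{prop:smooth}, whose source space is the \emph{polynomially} weighted $L^{2,\sigma+1}$. It follows by Plancherel in time from the one-sided resolvent bound of Proposition \ref{prop:smooth2} for the shifted operator $\mathcal L_{[c]}-\gamma^{3/2}c\mathbf P_{[c]}$, so no $TT^*$ or Christ--Kiselev argument is needed.

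Polynomial weights tolerate $e^{-|x|/B}$ localization, but at the cost of powers of $B$, and this cost is real. By Lemma \ref{lem:gKdV21hard}, the new commutator $[\zeta_B,\mathcal L_{[c_0]}]v$ contributes $A_1^{\sigma+1}B^{-1}\|v\|_{L^2(I,\Sigma_{1AA_1})}+B^{\sigma}\|v\|_{L^2(I,\Sigma_{2AA_1})}$, with a \emph{large} coefficient on the $\Sigma_{2AA_1}$ term. Smoothing alone therefore cannot produce the factor $o_{B^{-1}}(1)$.

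The paper closes the argument by inserting the virial Lemma \ref{prop:1virial}, in particular \eqref{eq:sec:1virial12}, which gains $A_1^{-1}$. This yields the factor $A_1^{\sigma+1}B^{-1}+B^{\sigma}A_1^{-1}$ in front of $\|v\|_{L^2(I,\widetilde{\Sigma})}$. That factor is small for $A_1=B^{3/5}$ and $1/2<\sigma<3/5$, so the term is absorbed. Your proposal never uses the virial inequality, so even with the localization repaired it would lack the mechanism that produces the small constant.
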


\begin{proof}[Proof of Theorem \ref{thm:main}] The inequality \eqref{eq:asstab2} is a simple consequence of Proposition \ref{prop:continuation},
while \eqref{eq:asstab1} follows from the modulation Lemma \ref{lem:mod1}.  Consider   now 
 \begin{equation}
  \begin{aligned}\label{eq:bolda}
   \mathbf{a}(t) &:= 2 ^{-1} \langle  e^{- 2a\< x\>}   v (t), v (t)  \rangle    .
  \end{aligned}
  \end{equation}
Then  \small
\begin{align*}
\dot{\mathbf a}  &=  \langle  e^{- 2a\< x\>}   v  , \mathcal L^{(0)}_{[c]}v  - R_{-\partial _x ^2} (-1) \partial _x    \left ( f( Q   _{c} + v  )  -  f( Q  _{c} ) \right )  
    +   (\dot D -c )     (   \xi _1[c]+   v' )-    \dot c   \xi _2[c]   \rangle \\&  = O(\| v \| _{H^1}^2)     = O(B  \epsilon ^2) .
\end{align*}
\normalsize
Thus $t\to \mathbf{a}(t)$ is  globally Lipschitz. Since $\mathbf a\in L^1(\R_+)$ by \eqref{eq:asstab2},  we have $\mathbf{a}(t)\xrightarrow{t\to +\infty}0 $, proving   \eqref{eq:asstab20}.   
Next we prove \eqref{eq:asstab3}.
Since $c(t)\in J_0$ and
$
m_*:=\min_{c\in J_0}m'(c)>0,
$
it suffices to show that $m(c(t))$ converges as $t\to+\infty$.
Since 
\begin{align*}
    M(u_0) =m(c)+M(v) + \langle v, (1-\partial _x ^2) Q_c \rangle = m(c)+M(v) 
\end{align*}
by $\langle v,\eta_2 [c] \rangle =0$ and by $\eta_2 [c] =\theta _3[c] (1-\partial _x ^2) Q_c$,   it suffices   to prove $\frac{d}{dt}M( {v}) \in L^1(\R _+) $. We have \small
\begin{align*}
\frac{d}{dt}M( {v})&=    \<  (1   -\partial _x ^2) v  ,  \dot v  \> = I+II \text{ with }\\
I&:= (\dot D -c ) \< v , (1   -\partial _x ^2) \xi _1[c]\> -  \dot c \<v ,  (1   -\partial _x ^2) \xi _2[c]\>   \text{ and}\\  II&:=  \< v ',    f  ( v  +Q_c) -f  (  Q_c)      \> 
   ,
\end{align*}  \normalsize
after some cancellations.  By the exponential decay of $\xi_1[c]$ and $\xi_2[c]$,
\begin{align*}
|I(t)|
\lesssim (|\dot D-c|+|\dot c|)
\bigl(
\|  v(t)\|_{\Sigma_{1AA_1}}
+\|  v(t)\|_{\Sigma_{2AA_1}}
\bigr).
\end{align*}
Thus \eqref{eq:main2} on $\R_+$ implies $I\in L^1(\R_+)$.    Next we observe 
that for $1<p\le2$,  we have $|f'(r)-f'(s)|\le C_p |r-s|^{p-1}$
 for $ r,s\in\R$, hence, like in \cite{BaCuDeSaMa}, 
\begin{align*}
II =\< v ',    f  ( v  +Q_c) -f  (  Q_c) -f(v)     \> 
=\int_0^1 \< v '  v ,    f ' ( s v  +Q_c) -f '  (sv)     \>  ds .
\end{align*}
For $A$ sufficiently large,  
  $Q_c^{p-1}\lesssim\zeta_A^2$  uniformly for $c$ near $c_0$
 gives
\begin{align*}
|II |
\lesssim\int_{\R}Q_c^{p-1}|v v '|\,dx
\lesssim\|\zeta_Av (t)\|_{H^1}^2
\lesssim
\|  v(t)\|_{\Sigma_{1AA_1}}^2
+\|  v(t)\|_{\Sigma_{2AA_1}}^2.
\end{align*}
By \eqref{eq:main2}, the last expression belongs to $L^1(\R_+)$.
This proves $II\in L^1(\R_+)$ and completes the proof of $\frac{d}{dt}M( {v}) \in L^1(\R _+) $ and of the last statement in Theorem \ref{thm:main}.

\end{proof}

\section{The virial inequality: proof of Lemma \ref{prop:1virial}}
\label{sec:virial}

Using the notation in \eqref{def:phiAi}, we set
\begin{align}
\label{def:funct:vir}
\mathcal I_i
:=
\int_{\R}
\varphi_{iAA_1}(x)
\left(v^2+(v')^2\right)\,dx,
\qquad i=1,2.
\end{align}

\begin{lemma}\label{lem:1stV1}
There exist fixed constants $C_j=C_j(c_0)>0$ such that
\begin{align}
\label{eq:lem:1stV11}
\|v\|_{\Sigma_{1AA_1}}^2
&\leq
-C_1\frac{d}{dt}\mathcal I_1
-C_2\frac{d}{dt}\mathcal I_2
+
C_3
\|v\|_{\widetilde{\Sigma}}^2
,
\\
\label{eq:lem:1stV12}
\|v\|_{\Sigma_{2AA_1}}^2
&\leq
-C_4A_1^{-2} \frac{d}{dt}\mathcal I_1-C_5\frac{d}{dt}\mathcal I_2
+
C_6  
A_1^{-2}\|v\|_{\widetilde{\Sigma}}^2
.
\end{align}
\end{lemma}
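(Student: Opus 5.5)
We compute $\frac{d}{dt}\mathcal I_i$ directly from the equation \eqref{eq:gKdV3fl}. The natural form is obtained by writing $\mathcal I_i=\langle \varphi_{iAA_1} v, v\rangle+\langle\varphi_{iAA_1} v',v'\rangle$. Since $\dot v$ contains the nonlocal operator $R_{-\partial_x^2}(-1)\partial_x$, we set $w:=(1-\partial_x^2)^{-1}(\cdot)$ and use the identity $(1-\partial_x^2)\dot v=-\partial_x\big((c-1)v-c v''... \big)$, or equivalently test \eqref{eq:BBM1} in its local form. More precisely, from \eqref{eq:BBM1} written for $v$ in the moving frame,
\[
(1-\partial_x^2)(\dot v -\dot D v')=-\partial_x\big(v+f(Q_c+v)-f(Q_c)\big)+(\dot D-c)(1-\partial_x^2)\xi_1[c]\!-\!\dot c(1-\partial_x^2)\xi_2[c]+\dots
\]
Multiplying by $\varphi v$ and integrating by parts, we get
\[
\tfrac12\tfrac{d}{dt}\mathcal I_i=-\tfrac{\dot D}{2}\int\varphi'(v^2+v'^2)+\tfrac12\int\varphi' v^2+\int\varphi' vF(v)\text{-terms}+\text{(cross terms with }\dot v'\varphi' v).
\]
The cross term $\int \varphi' v\,\dot v'$ is the nonlocal part. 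We treat it by writing $\dot v'$ through $R_{-\partial_x^2}(-1)\partial_x^2$, i.e. as $-(g)+ (1-\partial_x^2)^{-1}g$ with $g=v+f(Q_c+v)-f(Q_c)$, which is bounded with weights since $\varphi'=\zeta_A^2\vartheta_{iA_1}^2$ varies slowly on scales $A,A_1\gg1$ (commutator of $(1-\partial_x^2)^{-1}$ with $\zeta_A$ costs $O(A^{-1})$).

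The leading quadratic form, since $\dot D\approx c>1$, is
\[
-\tfrac{c}{2}\int\varphi'(v^2+v'^2)+\tfrac12\int\varphi'v^2-\int\varphi' v\,(1-\partial_x^2)^{-1}\partial_x^2 v+\dots
\]
We check that this is coercive: on the Fourier side the symbol is negative, $-\frac{c}{2}(1+\xi^2)+\frac12\cdot\frac{1-\xi^2}{1+\xi^2}\cdot(\dots)\le -\kappa(c-1)(1+\xi^2)$, uniformly for $c\in J_0$. After commuting $\vartheta_{iA_1}\zeta_A$ inside, this gives $-\kappa\|v\|^2_{\Sigma_{iAA_1}}$ up to errors $O(A^{-1}+A_1^{-1})(\|v\|^2_{\Sigma_{1AA_1}}+\|v\|^2_{\Sigma_{2AA_1}})$. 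The commutator terms arising from $\vartheta_{iA_1}'$ are supported in $A_1\le|x|\le2A_1$, which explains the $A_1^{-2}$ factors in \eqref{eq:lem:1stV12} when they are absorbed against $\Sigma_{1AA_1}$.

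The remaining terms are handled as follows. The potential terms $\int\varphi' v f'(Q_c)v$, with $f'(Q_c)$ localized on scale $O(1)$, are bounded by $\|v\|^2_{\widetilde\Sigma}$ when $i=1$ and by $e^{-cA_1}\|v\|_{\widetilde\Sigma}^2\le A_1^{-2}\|v\|^2_{\widetilde\Sigma}$ when $i=2$, since $\vartheta_{2A_1}$ vanishes near the soliton. The nonlinear terms are controlled by $\|v\|_{H^1}^{p-1}\lesssim\epsilon^{p-1}$ times $\Sigma$ norms, using the $1<p\le2$ Hölder estimate $|f'(r)-f'(s)|\lesssim|r-s|^{p-1}$, and are absorbed. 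The modulation terms, via Lemma \ref{lem:lemdscrt}, are $O(B^{-1})$ times $\Sigma$ norms and are also absorbed. Finally, \eqref{eq:lem:1stV11} is obtained by combining the identity for $\mathcal I_1$ with a multiple of that for $\mathcal I_2$ to absorb boundary errors, and \eqref{eq:lem:1stV12} directly from $\mathcal I_2$ plus $A_1^{-2}\mathcal I_1$.

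The main obstacle is the nonlocal cross term and the exact coercivity of the symbol with the slowly varying weight. I would first try to prove the commutator bound $\|[\zeta_A\vartheta_{iA_1},(1-\partial_x^2)^{-1}\partial_x]\|\lesssim A^{-1}+A_1^{-1}$ using the explicit kernel $\tfrac12 e^{-|x|}$.
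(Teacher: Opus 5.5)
Your architecture is the paper's: differentiate $\mathcal I_i$ along \eqref{eq:gKdV3fl}, isolate the free quadratic form, commute $\zeta_A\vartheta_{iA_1}$ through $R_{-\partial_x^2}(-1)$ using the kernel $\frac12 e^{-|x-y|}$, and add a small multiple of the $\mathcal I_2$ identity to the $\mathcal I_1$ one. The genuine gap is in the nonlinear terms.

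Testing $-\partial_x\bigl(f(Q_c+v)-f(Q_c)\bigr)$ against $\varphi_{iAA_1}v$ does not produce only $\varphi'_{iAA_1}$-weighted terms. After the cross term is added, it leaves $\int\varphi_{iAA_1}v'\bigl(f(Q_c+v)-f(Q_c)\bigr)\,dx$, whose weight does not decay at infinity, so no $\Sigma$-norm controls it. The paper rewrites $v'\bigl(f(Q_c+v)-f(Q_c)\bigr)=\partial_x\bigl(F(Q_c+v)-F(Q_c)-f(Q_c)v\bigr)-Q_c'\bigl(f(Q_c+v)-f(Q_c)-f'(Q_c)v\bigr)$. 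The resulting term $\int\varphi_{iAA_1}Q_c'\bigl(f(Q_c+v)-f(Q_c)-f'(Q_c)v\bigr)\,dx$ is of quadratic order in $v$, to leading order $\frac12\int\varphi_{iAA_1}Q_c'f''(Q_c)v^2\,dx$. It is therefore not $\epsilon^{p-1}$-small and cannot be absorbed; for $i=1$ it must go into $C_3\|v\|_{\widetilde\Sigma}^2$. Moreover, for $1<p<2$ the H\"older bound $|f(Q_c+v)-f(Q_c)-f'(Q_c)v|\lesssim|v|^p$ you invoke is useless here. The quantity $\int Q_c|v|^p\,dx$ is not dominated by any quadratic norm of $v$, as you see by replacing $v$ with $\eta v$ and letting $\eta\to0$. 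The paper splits into $\{2|v|\le Q_c\}$, where the remainder is $\lesssim Q_c^{p-2}v^2$ and the contribution is $\lesssim\|v\|_{\widetilde\Sigma}^2$. On the complement, $Q_c\lesssim|v|$ gives $|Q_c'||v|^p\lesssim Q_c^{(p-1)/2}|v|^{(p+3)/2}$, a small multiple of $\|v\|_{\Sigma_{1AA_1}}^2+\|v\|_{\Sigma_{2AA_1}}^2$.

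Two further corrections. First, the nonlocal part of $\dot v'$ is $-\partial_xR_{-\partial_x^2}(-1)\partial_x g=g-R_{-\partial_x^2}(-1)g$, not $R_{-\partial_x^2}(-1)g-g$. With your sign, the displayed form has symbol $-\frac c2(1+\xi^2)+\frac{1+3\xi^2}{2(1+\xi^2)}$, which is positive for some $\xi$ when $c$ is close to $1$. The correct form is $-\frac c2\int\varphi'(v^2+v'^2)-\frac12\int\varphi'v^2+\int\varphi'vR_{-\partial_x^2}(-1)v$, one half of \eqref{eq:K0-exact}. Its symbol is the one you actually wrote, and your bound $\le-\frac{c-1}{2}(1+\xi^2)$ is equivalent to the paper's use of $\|R_{-\partial_x^2}(-1)\|_{\mathcal L(L^2)}\le1$.

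Second, your error accounting does not yet give the factor $A_1^{-2}$ in \eqref{eq:lem:1stV12}, which is what lets the proof of Lemma \ref{prop:smooth11} close with $A_1=B^{3/5}$ and $\sigma>1/2$. A commutator error of the form $(A^{-1}+A_1^{-1})(\|v\|_{\Sigma_{1AA_1}}^2+\|v\|_{\Sigma_{2AA_1}}^2)$ in the $i=2$ identity yields only $A_1^{-1}\|v\|_{\Sigma_{1AA_1}}^2$. You must keep it bilinear, $A_1^{-1}\|\zeta_A\vartheta_{iA_1}v\|_{L^2}(\|v\|_{\Sigma_{1AA_1}}+\|v\|_{\Sigma_{2AA_1}})$, and apply Young's inequality. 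Similarly, a modulation error $B^{-1}\|v\|_{\Sigma_{1AA_1}}^2$ in the $i=2$ identity is too large, since $B^{-1}\gg A_1^{-2}$. You need that $\varphi_{2AA_1}$ vanishes on $[-A_1,A_1]$, so that the contributions of $\xi_1[c]$ and $\xi_2[c]$ there are exponentially small in $A_1$.
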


\begin{proof}
Using \eqref{eq:gKdV3fl}, we obtain
\begin{align*}
\frac{d}{dt}\mathcal I_i
= K_i^{(0)}
+
K_i^f
+
K_i^D
+
K_i^c 
\end{align*}
where \small
\begin{align}
K_i^{(0)} &:=2\langle
\varphi_{iAA_1}v,\mathcal L_{[c]}^{(0)}v\rangle
+
2\langle
\varphi_{iAA_1}v' ,
\left(
\mathcal L_{[c]}^{(0)}v
\right)'\rangle , 
\nonumber\\
K_i^f:=&-
2\langle
\varphi_{iAA_1}v,
R_{-\partial_x^2}(-1)\partial_x
\left(
f(Q_c+v)-f(Q_c)
\right)\rangle  -
2\langle 
\varphi_{iAA_1}v', 
\partial_xR_{-\partial_x^2}(-1)\partial_x
\left(
f(Q_c+v)-f(Q_c)
\right)\rangle
\nonumber\\
K_i^D &:=  
2(\dot D-c)
\int_{\R}
\varphi_{iAA_1}v
\left(
\xi_1[c]+v'
\right)\,dx
 +
2(\dot D-c)
\int_{\R}
\varphi_{iAA_1}v'
\left(
\xi_1[c]+v'
\right) '  dx
\nonumber\\
K_i^c&:= -
2\dot c
\int_{\R}
\varphi_{iAA_1}
\left(
v\xi_2[c]+v'\xi_2'[c]
\right) dx
 .
\label{eq:virial-decomposition}
\end{align}
\normalsize
We first consider $K_i^{(0)}$. By \eqref{eq:Lin0} we have $\mathcal L_{[c]}^{(0)} 
=
c\partial _x
-
R_{-\partial_x^2}(-1)\partial _x$.  
Hence
\begin{align}
K_i^{(0)}
={}&
2c\int_{\R}
\varphi_{iAA_1}
\left(
vv'+v'v''
\right)\,dx
-
2\int_{\R}
\varphi_{iAA_1}
\left[
vR_{-\partial_x^2}(-1)v'
+
v'
\partial_xR_{-\partial_x^2}(-1)v'
\right]\,dx .
\label{eq:K0-first}
\end{align}
The first term in the right hand side in \eqref{eq:K0-first} satisfies
\begin{align}
2c\int_{\R}
\varphi_{iAA_1}
\left(
vv'+v'v''
\right)\,dx
=
-c\int_{\R}
\varphi_{iAA_1}'
\left(
v^2+(v')^2
\right)\,dx.
\label{eq:K0-transport}
\end{align}
Since  $\partial_x^2R_{-\partial_x^2}(-1)
=
R_{-\partial_x^2}(-1)-1,$
 the second integral in the right hand side in \eqref{eq:K0-first} is equal to
\begin{align*}
&
-2\int_{\R}
\varphi_{iAA_1}
\left[
v\partial_xR_{-\partial_x^2}(-1)v
+
v'
\left(
R_{-\partial_x^2}(-1)v-v
\right)
\right]\,dx
\\
&=
-2\int_{\R}
\varphi_{iAA_1}
\partial_x
\left[
vR_{-\partial_x^2}(-1)v
\right]\,dx
+
2\int_{\R}
\varphi_{iAA_1}vv'\,dx
\\
&=
2\int_{\R}
\varphi_{iAA_1}'
vR_{-\partial_x^2}(-1)v\,dx
-
\int_{\R}
\varphi_{iAA_1}'v^2\,dx.
\end{align*}
Using \eqref{def:phiAi}, it follows that
\begin{align}
K_i^{(0)}
={}&
-c\int_{\R}
\zeta_A^2\vartheta_{iA_1}^2
\left(
v^2+(v')^2
\right)\,dx
-
\int_{\R}
\zeta_A^2\vartheta_{iA_1}^2v^2\,dx+
2\int_{\R}
\zeta_A^2\vartheta_{iA_1}^2
vR_{-\partial_x^2}(-1)v\,dx.
\label{eq:K0-exact}
\end{align}
We have 
\begin{align}
    \label{eq:intker}R_{-\partial_x^2}(-1)g(x)=  2^{-1} \int _\R e^{-|x-y|} g(y)dy.
\end{align}
  Moreover,
\begin{align}
\left|
\left(
\zeta_A\vartheta_{iA_1}
\right)'(x)
\right|
\lesssim
\left(
A^{-1}+A_1^{-1}
\right)
\zeta_A(x).
\label{eq:weight-derivative}
\end{align}
We claim
\begin{align}
&
\left\|
\zeta_A\vartheta_{iA_1}
R_{-\partial_x^2}(-1)v
-
R_{-\partial_x^2}(-1)
\left(
\zeta_A\vartheta_{iA_1}v
\right)
\right\|_{L^2}
\lesssim
\left(
A^{-1}+A_1^{-1}
\right)
\left(
\|v\|_{\Sigma_{1AA_1}}
+
\|v\|_{\Sigma_{2AA_1}}
\right).
\label{eq:resolvent-commutator}
\end{align}
Assuming \eqref{eq:resolvent-commutator}   and using $\|
R_{-\partial_x^2}(-1)
\| _{\mathcal L(L^2(\R))}\le 1$
we obtain
\begin{align*}
\int_{\R}
\zeta_A^2\vartheta_{iA_1}^2
vR_{-\partial_x^2}(-1)v\,dx
&=
\left\langle
\zeta_A\vartheta_{iA_1}v,
R_{-\partial_x^2}(-1)
\left(
\zeta_A\vartheta_{iA_1}v
\right)
\right\rangle
\\
&\quad+
\left\langle
\zeta_A\vartheta_{iA_1}v,
\zeta_A\vartheta_{iA_1}
R_{-\partial_x^2}(-1)v
-
R_{-\partial_x^2}(-1)
\left(
\zeta_A\vartheta_{iA_1}v
\right)
\right\rangle
\\
&\leq
\left\|
\zeta_A\vartheta_{iA_1}v
\right\|_{L^2}^2
\\
&\quad+
C
\left(
A^{-1}+A_1^{-1}
\right)
\left\|
\zeta_A\vartheta_{iA_1}v
\right\|_{L^2}
\left(
\|v\|_{\Sigma_{1AA_1}}
+
\|v\|_{\Sigma_{2AA_1}}
\right).
\end{align*}
Thus,   by \eqref{eq:relABg} and  by  Young's inequality  in the last term we conclude that for every fixed $\varepsilon_1>0$ 
\begin{align}
&
2\int_{\R}
\zeta_A^2\vartheta_{iA_1}^2
vR_{-\partial_x^2}(-1)v\,dx
 \leq
(2+\varepsilon_1)
\left\|
\zeta_A\vartheta_{iA_1}v
\right\|_{L^2}^2
+
C_{\varepsilon_1}A_1^{-2}
\left(
\|v\|_{\Sigma_{1AA_1}}^2
+
\|v\|_{\Sigma_{2AA_1}}^2
\right).
\label{eq:resolvent-free-estimate}
\end{align} 
From \eqref{eq:K0-exact} we conclude
\begin{align}
K_i^{(0)}
\leq{}&
-(c-1-\varepsilon_1)
\left\|
\zeta_A\vartheta_{iA_1}v
\right\|_{L^2}^2
-
c
\left\|
\zeta_A\vartheta_{iA_1}v'
\right\|_{L^2}^2
+
C_{\varepsilon_1}A_1^{-2}
\left(
\|v\|_{\Sigma_{1AA_1}}^2
+
\|v\|_{\Sigma_{2AA_1}}^2
\right).
\label{eq:K0-coercive1}
\end{align}
Furthermore, $\left(
\zeta_A\vartheta_{iA_1}v
\right)'
=
\zeta_A\vartheta_{iA_1}v'
+
\left(
\zeta_A\vartheta_{iA_1}
\right)'v$ 
and hence
\begin{align}
\left\|
\left(
\zeta_A\vartheta_{iA_1}v
\right)'
\right\|_{L^2}^2
\lesssim
\left\|
\zeta_A\vartheta_{iA_1}v'
\right\|_{L^2}^2
+
A_1^{-2}
\left(
\|v\|_{\Sigma_{1AA_1}}^2
+
\|v\|_{\Sigma_{2AA_1}}^2
\right).
\label{eq:weighted-derivative-estimate}
\end{align}
Since $c(t)$ remains in a sufficiently small fixed neighborhood of
$c_0>1$, taking $\varepsilon_1>0$ sufficiently small gives
$\nu_0=\nu_0(c_0)>0$ such that
\begin{align}
K_i^{(0)}
\leq
-\nu_0\|v\|_{\Sigma_{iAA_1}}^2
+
CA_1^{-2}
\left(
\|v\|_{\Sigma_{1AA_1}}^2
+
\|v\|_{\Sigma_{2AA_1}}^2
\right).
\label{eq:K0-coercive2}
\end{align}
Now we prove \eqref {eq:resolvent-commutator}. On the left hand side we have an  operator whose integral kernel 
  can be majorized by 
\begin{align*}
 2^{-1}e^{-|x-y|}
\left |
\zeta_A(x)\vartheta_{iA_1}(x)
-
\zeta_A(y)\vartheta_{iA_1}(y)
\right | &\lesssim  \left(
A^{-1}+A_1^{-1}
\right)  e^{-|x-y|} |x-y|  \zeta_A(c_{x,y}) \\&\lesssim   \left(
A^{-1}+A_1^{-1}
\right)  e^{-|x-y|/2}    \zeta_A(y)   ,
\end{align*}
for $c_{x,y}$ between  $x$ and $y$. By Young's convolution inequality \cite[Theorem 4.33]{brezisbook} and the partition of unity $1= \vartheta _{1A_1} + \vartheta _{2A_1} $,  \eqref {eq:resolvent-commutator} ensues.

We next consider $K_i^f$. Using again
$\partial_x^2R_{-\partial_x^2}(-1)
=
R_{-\partial_x^2}(-1)-1$, we obtain
\begin{align*}
K_i^f
={}&
-2\int_{\R}
\varphi_{iAA_1}v
\partial_xR_{-\partial_x^2}(-1)
\left(
f(Q_c+v)-f(Q_c)
\right)\,dx
\\
&-
2\int_{\R}
\varphi_{iAA_1}v'
( R_{-\partial_x^2}(-1) -1) 
\left(
f(Q_c+v)-f(Q_c)
\right) \,dx
\\
={}&
2\int_{\R}
\varphi_{iAA_1}'
vR_{-\partial_x^2}(-1)
\left(
f(Q_c+v)-f(Q_c)
\right)\,dx
 +
2\int_{\R}
\varphi_{iAA_1}v'
\left(
f(Q_c+v)-f(Q_c)
\right)\,dx.
\end{align*}
The last integral can be integrated by parts without differentiating
$f'$ since \small
\begin{align*}
&
v'
\left(
f(Q_c+v)-f(Q_c)
\right)
=
\partial_x
\left ( F(Q_c+v)-F(Q_c )
-
f(Q_c)v
\right )
-
Q_c'
\left(
f(Q_c+v)-f(Q_c)-f'(Q_c)v
\right).
\end{align*}
Consequently,
\begin{align}
K_i^f
={}&
2\int_{\R}
\varphi_{iAA_1}'
vR_{-\partial_x^2}(-1)
\left(
f(Q_c+v)-f(Q_c)
\right)\,dx
\nonumber\\
&-
2\int_{\R}
\varphi_{iAA_1}'
\left (
 F(Q_c+v)-F(Q_c )
-
f(Q_c)v
\right ) dx
\label{eq:Kf-exact} \\
&-
2\int_{\R}
\varphi_{iAA_1}Q_c'
\left(
f(Q_c+v)-f(Q_c)-f'(Q_c)v
\right)\,dx.
\nonumber
\end{align} \normalsize
We write \small
\begin{align}
f(Q_c+v)-f(Q_c)
&=
f'(Q_c)v
+
\left(
f(Q_c+v)-f(Q_c)-f'(Q_c)v
\right), \label{eq:exp1}
\\
F(Q_c+v)-F(Q_c)-f(Q_c)v
&=
\frac12 f'(Q_c)v^2
+ (
F(Q_c+v)-F(Q_c)-f(Q_c)v-\frac12 f'(Q_c)v^2)
.  \label{eq:exp2}
\end{align} \normalsize
Hence the   contribution to \eqref{eq:Kf-exact} of the first term on the right in each of \eqref{eq:exp1}--\eqref{eq:exp2}   
 is
\begin{align*}
&
2\int_{\R}\varphi_{1AA_1}'v
R_{-\partial_x^2}(-1)\left(f'(Q_c)v\right)\,dx
-
\int_{\R}\varphi_{1AA_1}'f'(Q_c)v^2\,dx=O(\|v\|_{\widetilde{\Sigma}}^2)     .
\end{align*} 
We next estimate the nonlinear remainders. Since $1<p<2$ 
\begin{align*}
\left|
f(Q_c+v)-f(Q_c)-f'(Q_c)v
\right|
&\lesssim |v|^p,
\\
\left|
F(Q_c+v)-F(Q_c)-f(Q_c)v-\frac12 f'(Q_c)v^2
\right|
&\lesssim |v|^{p+1}.
\end{align*}
Therefore, again by \eqref{eq:resolvent-commutator} and $\|v\|_{L^\infty}\lesssim B^{ 1/2}\epsilon$,
\begin{align*}
&
\left|
2\int_{\R}\varphi_{1AA_1}'v
R_{-\partial_x^2}(-1)
\left(
f(Q_c+v)-f(Q_c)-f'(Q_c)v
\right)\,dx
\right|
\\
&\quad+
\left|
2\int_{\R}\varphi_{1AA_1}'
\left(
F(Q_c+v)-F(Q_c)-f(Q_c)v-\frac12f'(Q_c)v^2
\right)\,dx
\right|
\\
&\lesssim
B^{\frac{p-1}{2}} \epsilon ^{ {p-1} }
\left(
\|v\|_{\Sigma_{1AA_1}}^2
+
\|v\|_{\Sigma_{2AA_1}}^2
\right).
\end{align*}
For the  term in the last line  in \eqref{eq:Kf-exact}, set
\begin{align*}
\Omega_{1,t}
:=
\left\{
x\in\R:\ 2|v(t,x)|\leq Q_c(x)
\right\},
\qquad
\Omega_{2,t}
:=
\R\setminus\Omega_{1,t}.
\end{align*}
On $\Omega_{1,t}$,  we have 
\begin{align*}
\left|
f(Q_c+v)-f(Q_c)-f'(Q_c)v
\right|
\lesssim
Q_c^{p-2}v^2,
\end{align*}
and, since $|Q_c'|\lesssim Q_c$,
\begin{align*}
|Q_c'|
\left|
f(Q_c+v)-f(Q_c)-f'(Q_c)v
\right|
\lesssim
Q_c^{p-1}v^2.
\end{align*}
On $\Omega_{2,t}$ we have $Q_c\lesssim |v|$, and therefore
\begin{align*}
|Q_c'|
\left|
f(Q_c+v)-f(Q_c)-f'(Q_c)v
\right|
&\lesssim
Q_c|v|^p
\lesssim
Q_c^{\frac{p-1}{2}}
|v|^{\frac{p-1}{2}+2}.
\end{align*}
Using $\|v\|_{L^\infty}\lesssim B^{ 1/2}\epsilon$ and the exponential
decay of $Q_c$, we conclude that
\begin{align*}
&
\left|
2\int_{\R}
\varphi_{1AA_1}Q_c'
\left(
f(Q_c+v)-f(Q_c)-f'(Q_c)v
\right)\,dx
\right|
\\
&\lesssim
 B^{\frac{p-1}{4}} \epsilon ^{\frac{p-1}{2}}
 (
\|v\|_{\Sigma_{1AA_1}}^2
+
\|v\|_{\Sigma_{2AA_1}}^2)
+
\|v\|_{\widetilde{\Sigma}}^2
.
\end{align*}
Combining the preceding estimates, we obtain
\begin{align}
|K_1^f|
\lesssim 
\|v\|_{\widetilde{\Sigma}}^2
+
A_1^{-2}
\left(
\|v\|_{\Sigma_{1AA_1}}^2
+
\|v\|_{\Sigma_{2AA_1}}^2
\right).
\label{eq:Kf1}
\end{align}
For $i=2$, the support properties of $\vartheta_{2A_1}$, together
with \eqref{def:phiAi} and the exponential decay of $Q_c$, yield
\begin{align}
|K_2^f|
\lesssim
A_1^{-100}
 (
\|v\|_{\Sigma_{1AA_1}}^2
+
\|v\|_{\Sigma_{2AA_1}}^2
+
\|v\|_{\widetilde{\Sigma}}^2
 )
 \lesssim  A_1^{-100}
 (
\|v\|_{\Sigma_{1AA_1}}^2
+
\|v\|_{\Sigma_{2AA_1}}^2 
 ) .
\label{eq:Kf2}
\end{align}
We next estimate $K_i^D$. Splitting the terms containing $\xi_1[c]$
from those containing $v'$, we have \small
\begin{align}
K_i^D
={}&
2(\dot D-c)
\int_{\R}
\varphi_{iAA_1}
\left(
v\xi_1[c]+v'\xi_1'[c]
\right)\,dx
+
2(\dot D-c)
\int_{\R}
\varphi_{iAA_1}
\left(
vv'+v'v''
\right)\,dx.
\label{eq:KD-first}
\end{align}\normalsize
Integration by parts, first for $v\in C_0^2 (\R)$ with bounds extended by density,   gives
$$
\left|2\int_{\R}\varphi_{iAA_1}
\left(vv'+v'v''\right)\,dx\right|=
\int_{\R}\varphi_{iAA_1}'\left(v^2+v'^2\right),dx
\lesssim
\|v\|_{\Sigma_{iAA_1}}^2
+A_1^{-2}\left(
\|v\|_{\Sigma_{1AA_1}}^2+\|v\|_{\Sigma_{2AA_1}}^2
\right).
$$
Since $\xi_1[c]$ and $\xi_1'[c]$ decrease exponentially, we have
$$
\left|
\int_{\R}\varphi_{1AA_1}
\left(
v\xi_1[c]+v'\xi_1'[c]
\right)\,dx
\right|
\lesssim
\|v\|_{\Sigma_{1AA_1}}
+
A_1^{-100}\|v\|_{\Sigma_{2AA_1}}.
$$
Similarly, since
$\supp\varphi_{2AA_1}\subset\{|x|\gtrsim A_1\}$,
$$
\left|
\int_{\R}\varphi_{2AA_1}
\left(
v\xi_1[c]+v'\xi_1'[c]
\right)\,dx
\right|
\lesssim
A_1^{-100}
\left(
\|v\|_{\Sigma_{1AA_1}}
+
\|v\|_{\Sigma_{2AA_1}}
\right).
$$
Then, by \eqref{eq:relABg} and orbital stability,
\[
\|v\|_{\Sigma_{1AA_1}}+\|v\|_{\Sigma_{2AA_1}}
\lesssim \|v\|_{H^1(\R)}
\lesssim B^{1/2}\epsilon
\ll A_1^{-100}.
\]
Together with \eqref{eq:discrest1} and the preceding estimates,
this gives
\begin{align} \nonumber
|K_1^D|
&\lesssim
B^{-1}
\left(
\|v\|_{\Sigma_{1AA_1}}
+
\|v\|_{\Sigma_{2AA_1}}
\right)
\left(
\|v\|_{\Sigma_{1AA_1}}
+
A_1^{-100}\|v\|_{\Sigma_{2AA_1}}
\right)
\\
&  \lesssim \varepsilon _1 \|v\|_{\Sigma_{1AA_1}}^2 +  A_1^{-1}\|v\|_{\Sigma_{2AA_1}}^2
  . \label{eq:estKD1}
\end{align}
Similarly,
\begin{align}  
|K_2^D|
&\lesssim
A_1^{-100}B^{-1}
\left(
\|v\|_{\Sigma_{1AA_1}}
+
\|v\|_{\Sigma_{2AA_1}}
\right)^2
  \lesssim
A_1^{-100}\|v\|_{\Sigma_{1AA_1}}^2
+
B^{-1}\|v\|_{\Sigma_{2AA_1}}^2.  \label{eq:estKD2}
\end{align}
Finally we focus on $K_i^c$. 
By the exponential decay of $\xi_2[c]$ and $\xi_2'[c]$,
\begin{align*}
\left|
\int_{\R}
\varphi_{1AA_1}
\left(
v\xi_2[c]+v'\xi_2'[c]
\right)\,dx
\right|
&\lesssim
\|v\|_{\widetilde{\Sigma}},
\\
\left|
\int_{\R}
\varphi_{2AA_1}
\left(
v\xi_2[c]+v'\xi_2'[c]
\right)\,dx
\right|
&\lesssim
A_1^{-100}
\left(
\|v\|_{\Sigma_{1AA_1}}
+
\|v\|_{\Sigma_{2AA_1}}
\right).
\end{align*}
Using   \eqref{eq:discrest1}, we obtain
\begin{align}
|K_1^c|
&\lesssim
B^{-1}
\left(
\|v\|_{\Sigma_{1AA_1}}
+
\|v\|_{\Sigma_{2AA_1}}
\right)
\|v\|_{\widetilde{\Sigma}}\lesssim
B^{-2}
 (
\|v\|_{\Sigma_{1AA_1}}^2
+
\|v\|_{\Sigma_{2AA_1}}^2 )
+
\|v\|_{\widetilde{\Sigma}}^2  
\label{eq:Kc1}
\end{align}
and 
\begin{align}
|K_2^c|
&\lesssim
A_1^{-100}B^{-1}
\left(
\|v\|_{\Sigma_{1AA_1}}
+
\|v\|_{\Sigma_{2AA_1}}
\right)^2
 .
\label{eq:Kc2}
\end{align}
Combining \eqref{eq:virial-decomposition},
\eqref{eq:K0-coercive2}, \eqref{eq:Kf2},  \eqref{eq:estKD2},  \eqref{eq:Kc2}  and absorbing $\|v\|_{\Sigma_{2AA_1}}^2$ on the left hand side,   we obtain
\begin{align}
\|v\|_{\Sigma_{2AA_1}}^2
\lesssim
-\frac{d}{dt}\mathcal I_2
+
A_1^{-2}
\|v\|_{\Sigma_{1AA_1}}^2
+
A_1^{-2}
\|v\|_{\widetilde{\Sigma}}^2 .
\label{eq:virial-second}
\end{align}
To prove \eqref{eq:lem:1stV11}, fix a sufficiently small constant
$\kappa>0$, depending only on $c_0$, and consider the sum of the two
identities in \eqref{eq:virial-decomposition} with coefficients
$1$ and $\kappa$. By \eqref{eq:K0-coercive2},
\begin{align*}
K_1^{(0)}+\kappa K_2^{(0)}
\leq
-\nu_0
\left(
\|v\|_{\Sigma_{1AA_1}}^2
+
\kappa\|v\|_{\Sigma_{2AA_1}}^2
\right)
+
CA_1^{-2}
\left(
\|v\|_{\Sigma_{1AA_1}}^2
+
\|v\|_{\Sigma_{2AA_1}}^2
\right).
\end{align*}
For $A_1$ sufficiently large, the last term is absorbed by the
coercive terms on the right. Using then \eqref{eq:Kf1},
\eqref{eq:Kf2}, \eqref{eq:estKD1}--\eqref{eq:estKD2},
and \eqref{eq:Kc1}--\eqref{eq:Kc2}, and taking $\epsilon$ sufficiently
small, we get
\begin{align*}
\|v\|_{\Sigma_{1AA_1}}^2
+
\kappa\|v\|_{\Sigma_{2AA_1}}^2
\lesssim
&
-\frac{d}{dt}\mathcal I_1
-
\kappa\frac{d}{dt}\mathcal I_2
+
\|v\|_{\widetilde{\Sigma}}^2 .
\end{align*}
This yields \eqref{eq:lem:1stV11}. From the last inequality and \eqref{eq:virial-second} we obtain   \eqref{eq:lem:1stV12}.
\end{proof}

\textit{Proof of Lemma \ref{prop:1virial}.}
We integrate
\eqref{eq:lem:1stV11}--\eqref{eq:lem:1stV12}
on the time interval $I$. By orbital stability, 
\begin{align}
\label{eq:virial-boundary}
|\mathcal I_i(t)|
&\leq
\|\varphi_{iAA_1}\|_{L^\infty}
\|v(t)\|_{H^1}^2
\lesssim
A  B \epsilon ^2   ,
\qquad i=1,2.
\end{align}
Hence
\begin{align*}
\|v\|_{L^2(I,\Sigma_{1AA_1})}
&\lesssim
\sqrt {AB}\,\epsilon
+
\|v\|_{L^2(I,\widetilde{\Sigma})}
 ,
\\
\|v\|_{L^2(I,\Sigma_{2AA_1})}
&\lesssim\sqrt {AB}\,\epsilon
+
A_1^{-1}
\|v\|_{L^2(I,\widetilde{\Sigma})}
 .
\end{align*}
By the hierarchy \eqref{eq:relABg}, after decreasing $\epsilon_0$ if
necessary,
$\sqrt {AB}\,\epsilon\leq \epsilon ^{1/3}$.
This proves
\eqref{eq:sec:1virial11}--\eqref{eq:sec:1virial12}.
\qed

\section{Dispersion and Kato smoothing  for the linearized equation }\label{sec:disp}

We will use weighted decay estimates for the linearized operator in \eqref{eq:Lin}, following Miller and Weinstein \cite{MillerWeinsteinBBM}.
  For $p>1$ and $c>c^*(p)$, we consider the operators
\begin{align}\label{eq:projections}
  \mathbf{P}_{[c]} := \sum _{i=1,2}   \xi _i [c]    \< \eta _i [c],  \cdot \>
 \text{ and }  \mathbf{Q}_{[c]}=1-  \mathbf{P}_{[c]} .
\end{align}  
We have the spectral decomposition  
\begin{align}
  \label{eq:specdec} L^2 _a = N_g\left ( \mathcal{L}_{[c]} \right ) \oplus N_g^\perp \left ( \mathcal{L}^*_{[c]} \right ) .
\end{align} 
We have the following, which follows for $p=2$  by \cite{MillerWeinsteinBBM} combined with \cite{MartelLinear}.  
 
  \begin{theorem}\label{thm:pwdisp}  For any  $ p>1$ and $c>c^*(p)$ there exists 
    $a_0>0$ and for   $a\in (0, a_0)$,
        there  exist  constants $C( a,p,c)>0$  and $ b(a,p,c)>0$ s.t.
   \begin{align}\label{eq:pwdisp1}
      \| e^{  t \mathcal{L}_{[c]} }  \mathbf{Q} _{[c]} v_0  \| _{L^2_a}\le C( a,p,c)    e^{-b(a,p,c) t}\|    \mathbf Q _{[c]}  v_0\| _{L^2_a}\text{ for all }v_0\in L^2_a.
   \end{align}

 \end{theorem}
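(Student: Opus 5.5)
The plan is to follow the strategy of Pego and Weinstein \cite{PegoWei1,PegoWei2} and Miller and Weinstein \cite{MillerWeinsteinBBM}: conjugate by the weight, locate the essential spectrum, exclude eigenvalues, and then apply the Gearhart--Pr\"uss theorem.

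\emph{Step 1: conjugation and the constant-coefficient part.} Work with $\mathcal{A}_a:=e^{ax}\mathcal{L}_{[c]}e^{-ax}$ on $L^2(\R)$, which is unitarily equivalent to $\mathcal{L}_{[c]}$ on $L^2_a$. Its principal part is the Fourier multiplier obtained from $\mathcal{L}^{(0)}_{[c]}$ by replacing $\im\xi$ with $\im\xi-a$:
\begin{equation*}
\omega_a(\xi)=\frac{(\im\xi-a)\bigl(c(\xi+\im a)^2+c-1\bigr)}{1+(\xi+\im a)^2}.
\end{equation*}
Note that this symbol is bounded, so $\mathcal{L}^{(0)}_{[c]}$ is a bounded operator. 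A direct expansion for small $a>0$ gives
\begin{equation*}
\Re\omega_a(\xi)\le -b_0(a)<0
\end{equation*}
uniformly in $\xi\in\R$, including as $|\xi|\to\infty$, where the symbol tends to $c(\im\xi-a)-\im\xi+\dots$. This uses $c>1$: near $\xi=0$ one has $\Re\omega_a\approx -a(c-1)$.

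\emph{Step 2: essential spectrum.} The potential term $R_{-\partial_x^2}(-1)\partial_x\circ f'(Q_c)$, conjugated by $e^{ax}$, is compact on $L^2$ because $f'(Q_c)$ decays exponentially. Since $\mathcal{L}^{(0)}$ is bounded, the semigroup difference is compact and $e^{t\mathcal{A}_a}$ has essential growth bound at most $-b_0(a)$. Outside $\{\Re\lambda\le -b_0\}$ the spectrum therefore consists only of isolated eigenvalues of finite multiplicity.

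\emph{Step 3: eigenvalues in $\Re\lambda>-b$.} Any eigenvalue with $\Re\lambda\ge0$ other than $0$ is excluded by Proposition~\ref{prop:noembeig} in $L^2_a$. At $\lambda=0$ the generalized kernel is $\Span\{\xi_1,\xi_2\}$ by Lemma~\ref{lem:kernel}, and it is removed by $\mathbf{Q}_{[c]}$ via \eqref{eq:specdec}. Isolated eigenvalues can accumulate only at the essential spectrum. The only remaining issue is therefore a strip $-b<\Re\lambda<0$, which is handled by shrinking $b$: by compactness of the region $\{-b_0/2\le\Re\lambda\le0,\ |\lambda|\le R\}$, only finitely many eigenvalues lie there. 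For large $|\Im\lambda|$, the resolvent bound from Step 4 shows there are none.

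\emph{Step 4: uniform resolvent bound and Gearhart--Pr\"uss (main obstacle).} On the Hilbert space $\mathbf{Q}_{[c]}L^2_a$ we need
\begin{equation*}
\sup_{\Re\lambda\ge -b}\|(\lambda-\mathcal{L}_{[c]})^{-1}\mathbf{Q}_{[c]}\|<\infty.
\end{equation*}
On compact sets this follows from Step 3 by analyticity. For large $|\lambda|$, write
\begin{equation*}
\lambda-\mathcal{A}_a=(\lambda-\mathcal{A}^{(0)}_a)\bigl(1+(\lambda-\mathcal{A}^{(0)}_a)^{-1}K\bigr),
\end{equation*}
where $K$ is the compact conjugated potential term. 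We must show $\|(\lambda-\mathcal{A}^{(0)}_a)^{-1}K\|\to0$ as $|\Im\lambda|\to\infty$.

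The difficulty specific to BBM is that $\omega_a(\xi)$ does not tend to infinity: as $|\xi|\to\infty$, $\Im\omega_a(\xi)\approx(c-1)\xi$ grows only linearly, and resonance with $\Im\lambda$ occurs at a single frequency $\xi$. I would handle this by using that $K$ carries a factor $R_{-\partial_x^2}(-1)\partial_x$, which gains one derivative, and that $f'(Q_c)$ is smooth and decays exponentially. The multiplier $(\lambda-\omega_a(\xi))^{-1}$ is bounded by $b_0^{-1}$ and concentrates near one frequency $\xi_\lambda\sim\Im\lambda/(c-1)$, where the symbol of $K$ is of size $O(|\xi_\lambda|^{-1})$. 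A Hilbert--Schmidt-type or commutator estimate should then give decay of order $O(|\Im\lambda|^{-1/2})$ or better. If this direct argument fails for $1<p<2$, where $f'(Q_c)$ is less regular, I would fall back on the resolvent estimates in terms of explicit kernels mentioned in the introduction.

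Once the uniform resolvent bound is in hand, the Gearhart--Pr\"uss theorem yields \eqref{eq:pwdisp1}.
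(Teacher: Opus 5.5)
Your outline is the same as the paper's, which transfers Miller--Weinstein's argument. The essential spectrum comes from the constant-coefficient part, Lemma~\ref{lem:kernel} and Proposition~\ref{prop:noembeig} control the discrete spectrum, a large-$|\lambda|$ resolvent bound is proved, and Pr\"uss's theorem concludes. However, Step 2 rests on a false claim. The operator $\mathcal L^{(0)}_{[c]}=c\partial_x-R_{-\partial_x^2}(-1)\partial_x$ is not bounded. Indeed
\begin{equation*}
\omega_a(\xi)=c(\im\xi-a)+m(\xi),\qquad m(\xi):=-\frac{\im\xi-a}{1+(\xi+\im a)^2}=O(\langle\xi\rangle^{-1}),
\end{equation*}
so $\Re\omega_a\to-ca$, while $\mathop{\rm Im}\omega_a(\xi)=c\xi+O(\langle\xi\rangle^{-1})$ and $|\omega_a|\to\infty$. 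The slope $(c-1)\xi$ you wrote is only the low-frequency slope. Hence ``$\mathcal L^{(0)}$ bounded, so the semigroup difference is compact'' is not available as argued. The conclusion survives if you argue at the resolvent level, which is all Pr\"uss needs. After conjugation the potential term is $K=m(D)\circ f'(Q_c)$. Its kernel is bounded by $\tfrac12 e^{-(1-a)|x-y|}|f'(Q_c(y))|$, so $K$ is Hilbert--Schmidt. Then $\lambda\mapsto(\lambda-\mathcal A^{(0)}_a)^{-1}K$ is an analytic family of compact operators on $\{\Re\lambda>-b_0\}$. By the analytic Fredholm theorem the spectrum there is discrete, consisting of eigenvalues of finite multiplicity. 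Alternatively, Voigt's theorem on compact perturbations of $C_0$ semigroups gives the essential growth bound.

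Your ``main obstacle'' in Step 4 is left unfinished, but it closes in one line. It needs neither Hilbert--Schmidt nor commutator arguments, nor any smoothness of $f'(Q_c)$. In any case $f'(Q_c)=pQ_c^{p-1}$ is real analytic because $Q_c>0$; the roughness for $1<p<2$ concerns $f$ near $0$, so your fallback is unnecessary. Since the multipliers compose,
\begin{equation*}
(\lambda-\mathcal A^{(0)}_a)^{-1}K=n_\lambda(D)\circ f'(Q_c),\qquad n_\lambda(\xi):=\frac{m(\xi)}{\lambda-\omega_a(\xi)} .
\end{equation*}
Take $\Re\lambda\ge-b$ with $b<b_0$ and split into three cases.
\begin{itemize}
\item For $|\xi|\ge|\mathop{\rm Im}\lambda|/(2c)$, use $|m(\xi)|\lesssim\langle\mathop{\rm Im}\lambda\rangle^{-1}$ and $|\lambda-\omega_a(\xi)|\ge b_0-b$.
\item For $|\xi|\le|\mathop{\rm Im}\lambda|/(2c)$, use $|\mathop{\rm Im}\omega_a(\xi)-c\xi|\lesssim1$, which gives $|\lambda-\omega_a(\xi)|\gtrsim|\mathop{\rm Im}\lambda|$.
\item For large $\Re\lambda$, use $|\lambda-\omega_a(\xi)|\ge\Re\lambda+b_0$.
\end{itemize}
Together these give $\sup_\xi|n_\lambda(\xi)|\lesssim\langle\lambda\rangle^{-1}$, so $\|(\lambda-\mathcal A^{(0)}_a)^{-1}K\|\to0$. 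Combined with $\|(\lambda-\mathcal A^{(0)}_a)^{-1}\|\le(b_0-b)^{-1}$, a Neumann series gives the uniform bound for large $|\lambda|$. This is the paper's remark that these estimates use only the localization of $f'(Q_c)$. With these two corrections, your Step 3, the compact-set bound and the Gearhart--Pr\"uss conclusion go through as in the paper.
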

 \begin{proof}
     The proof is the same as in \cite{MillerWeinsteinBBM}.
Indeed, the essential spectrum in $L_a^2(\R)$  does not depend on   $p$ and so it is the same as the case $p=2$. Moreover, Lemma \ref{lem:kernel} gives the same
two--dimensional generalized kernel at zero, while Proposition
\ref{prop:noembeig} excludes all nonzero spectrum in
$\{\Re z\geq0\}$. Finally, the large  $|z|$ resolvent estimates of
\cite{MillerWeinsteinBBM} use only the exponential localization of
$f'(Q_c)$ and apply without change to
$f'(Q_c)=pQ_c^{p-1}$. The uniform resolvent bound on the range of
$\mathbf Q_{[c]}$ then follows, and the result follows from the
Pr\"uss theorem, \cite{Pruss1984},  used in \cite{MillerWeinsteinBBM}.
 \end{proof}

Next we have the following. 

\begin{proposition}\label{prop:smooth}  For any  $ p>1$,  $c>c^*(p)$ and $\sigma>1/2$  there is a   $C(  p,c,\sigma)>0$ such that
  \begin{align}\label{eq:smooth1} &
      \left \| \int _0 ^t    e^{  (t-s) \mathcal{L}_{[c]}  }  \mathbf{Q} _{[c]} f(s) ds  \right \| _{L^2 ( \R _+ ,    \widetilde{\Sigma}) }\le C(  p,c,\sigma)  \|     f \| _{ L^{ 2 }  ( \R  _+  ,    L^{ 2, \sigma + 1}( \R) ) }\text{ for all }f\in L^{ 2 }  ( \R  _+  ,    L^{ 2, \sigma + 1}( \R) )  .  
   \end{align}
\end{proposition}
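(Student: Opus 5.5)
Our plan is to reduce \eqref{eq:smooth1} to a uniform weighted resolvent estimate on the imaginary axis and then invoke Kato's smoothing theory in its inhomogeneous (Duhamel) form. Extend $f$ by zero to $t<0$ and set $U(t):=\int_0^t e^{(t-s)\mathcal L_{[c]}}\mathbf Q_{[c]}f(s)\,ds$. Since $f\in L^2(\R_+,L^{2,\sigma+1})\subset L^2(\R_+,L^2)$, Theorem \ref{thm:pwdisp} in $L^2_{\pm a}$ (and $L^{2,\sigma+1}\subset L^2_{-a}\cap L^2_a$ locally is not needed; we only use that $\mathbf Q_{[c]}f$ makes sense) gives enough a priori control of $U$ to justify taking the Fourier transform in time. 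Replacing $t$ by $e^{-\mu t}$ damping with $\mu\to0^+$, Plancherel gives
\begin{align*}
\|U\|_{L^2(\R_+,\widetilde\Sigma)}\le \sup_{\mu>0,\ \tau\in\R}\left\|\sech(\sqrt\gamma\kappa x)\,R_{\mathcal L_{[c]}}(\mu+\im\tau)\mathbf Q_{[c]}\langle x\rangle^{-\sigma-1}\right\|_{\mathcal L(L^2)}\|f\|_{L^2(\R_+,L^{2,\sigma+1})},
\end{align*}
so the whole statement reduces to a uniform bound for the weighted resolvent $\sech(\sqrt\gamma\kappa x)R_{\mathcal L_{[c]}}(z)\mathbf Q_{[c]}\langle x\rangle^{-\sigma-1}$ for $\Re z>0$, up to the boundary.

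To prove that bound we split $z$ into three regimes. For $|z|$ large, we use the Miller--Weinstein large-$|z|$ estimates quoted in the proof of Theorem \ref{thm:pwdisp}: $\mathcal L_{[c]}$ is a bounded (order zero) perturbation of $\mathcal L^{(0)}_{[c]}=c\partial_x-R_{-\partial_x^2}(-1)\partial_x$, whose symbol $\im k(c+ (1+k^2)^{-1}\cdots)$ is explicit; the free resolvent of $\mathcal L^{(0)}_{[c]}$ is computed via partial fractions of a rational function in $k$ (the equation $(1-\partial_x^2)z u-\partial_x(-c\partial_x^2+c-1)u=g$ is a third order ODE with constant coefficients, whose roots are controlled by Lemma \ref{lem:roots1}), and a direct kernel bound shows that $\langle x\rangle^{-\sigma}R_{\mathcal L^{(0)}}(z)\langle x\rangle^{-\sigma}$ is uniformly bounded and that the perturbation $R_{\mathcal L^{(0)}}(z) R_{-\partial_x^2}(-1)\partial_x f'(Q_c)$ is small in weighted norms for $|z|\gg1$, so a Neumann series closes. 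Here $\sigma>1/2$ is exactly what makes $\langle x\rangle^{-\sigma}\in L^2$, handling the non-decaying oscillatory part of the kernel; the extra $\langle x\rangle^{-1}$ absorbs the possible linear growth near the threshold $z=0$.

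For $z$ in a compact set of $\overline{\C_+}\setminus\{0\}$, we use the limiting absorption principle by a Fredholm argument: write $R_{\mathcal L_{[c]}}(z)=(1+R_{\mathcal L^{(0)}}(z)V)^{-1}R_{\mathcal L^{(0)}}(z)$ with $V= R_{-\partial_x^2}(-1)\partial_x f'(Q_c)$ exponentially localized, so $R_{\mathcal L^{(0)}}(z)V$ is compact on $L^{2,-\sigma}$ and continuous up to $\im\R$. Invertibility of $1+R_{\mathcal L^{(0)}}(z)V$ at boundary points is equivalent to the absence of eigenvalues and resonances there, which is Proposition \ref{prop:noembeig} (for $\Re z>0$ and $\im\R\setminus\{0\}$); resonances are excluded since a bounded solution of the ODE at $z=\im\tau\neq0$ with outgoing behavior is forced, by the exponentially decaying branch from Lemma \ref{lem:roots1} and a flux identity, to be in $L^2$. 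Continuity and compactness then give a uniform bound on compacts.

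The main obstacle is the neighborhood of $z=0$, where the generalized kernel $\Span\{\xi_1,\xi_2\}$ of Lemma \ref{lem:kernel} produces a pole of order two. Here we would work in $L^2_a$ using Theorem \ref{thm:pwdisp}: on $\operatorname{Ran}\mathbf Q_{[c]}$ the resolvent in $L^2_a$ is analytic across a neighborhood of $0$ (the spectrum of $\mathcal L_{[c]}$ in $L^2_a$ restricted to $\mathbf Q$ lies in $\Re z\le -b$). Since $\langle x\rangle^{-\sigma-1}$ maps $L^2$ to $L^2_{-a}$ only, we instead symmetrize: use $L^2_a$ for the part of $f$ supported on $x<0$... which fails; so we first try to exploit that $\sech(\sqrt\gamma\kappa x)$ on the left is exponentially decaying, writing $g=\mathbf Q\langle x\rangle^{-\sigma-1}h$ and estimating $R(z)g$ near $0$ through the explicit free kernel, whose only non-decaying branch (the root $k\to0$) is transport to the right with speed $c-1>0$ and contributes a term bounded by $\int\langle y\rangle^{-\sigma-1}|h|\,dy$; the $\langle x\rangle^{-1}$ extra weight ensures this is bounded by $\|h\|_{L^2}$, while the projection $\mathbf Q_{[c]}$ kills the singular part via \eqref{eq:M2.5}.
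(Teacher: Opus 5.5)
\paragraph{Gap: uniform resolvent bound near $z=0$.} Your reduction to a uniform weighted resolvent bound is in the spirit of the paper. So is your treatment of $|z|\gg1$ and of compact subsets of $\overline{\C_+}\setminus\{0\}$ by a Fredholm argument around the free resolvent. The genuine gap is where you place the main obstacle yourself. You give no working argument for the uniform bound of $R_{\mathcal L_{[c]}}(z)\mathbf Q_{[c]}$ from $L^{2,\sigma+1}$ to $\widetilde\Sigma$ as $z\to0$: your first attempt is declared to fail, and the second only treats the free kernel.

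\begin{itemize}
\item At $z=0$ the Fredholm operator in your factorization is not invertible, since $\xi_1[c]=Q_c'$ lies in its kernel. Moreover $R_{\mathcal L_{[c]}}(z)$ blows up like $z^{-2}$ because of the Jordan block \eqref{eq:M2.3}.
\item In the polynomially weighted spaces you use, $z\mapsto R_{\mathcal L^{(0)}_{[c]}}(z)$ is only continuous (H\"older) up to $\Re z=0$. It has no analytic continuation across $z=0$. So there is no Laurent expansion on which ``$\mathbf Q_{[c]}$ kills the singular part via \eqref{eq:M2.5}'' could act.
\item $\mathbf Q_{[c]}$ cannot be inserted into a Fredholm argument on $L^{2,-\sigma}$ at all. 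The reason is that $\eta_1[c]\to-\theta_1\int_{\R}\partial_cQ_c\,dx$, in general nonzero, as $x\to+\infty$.
\item You have the direction of the non-decaying branch reversed. At low frequency $\mathcal L^{(0)}_{[c]}$ acts like $(c-1)\partial_x$, which is transport to the \emph{left}. Hence the free resolvent of $F\in L^{2,\sigma+1}$ behaves like $\int_x^{+\infty}F$, which is integrable on $\R_+$ because $\int_0^{+\infty}y|F(y)|\,dy<\infty$.
\end{itemize}

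This last point, not a ``linear growth near the threshold'', is what the extra weight $\langle x\rangle^{-1}$ is for. It makes the pairing with $\eta_1[c]$, hence $\mathbf P_{[c]}$, bounded on $Y_s=\{v\in L^{2,-s}: v|_{\R_+}\in L^1(\R_+)\}$.

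\paragraph{Missing idea: shift the generalized kernel off the imaginary axis.} Since $\mathcal L^2_{[c]}\mathbf P_{[c]}=0$, one has
\[
e^{t(\mathcal L_{[c]}-\gamma^{3/2}c\mathbf P_{[c]})}=e^{t\mathcal L_{[c]}}\mathbf Q_{[c]}+e^{-t\gamma^{3/2}c}\bigl(\mathbf P_{[c]}+t\mathcal L_{[c]}\mathbf P_{[c]}\bigr).
\]
So the Duhamel term in \eqref{eq:smooth1} is unchanged if you use the shifted semigroup applied to $\mathbf Q_{[c]}f$; note $\mathbf Q_{[c]}$ is bounded on $L^{2,\sigma+1}$.

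The shifted operator has no zero mode. Suppose $(\mathcal L_{[c]}-\gamma^{3/2}c\mathbf P_{[c]})g=zg$ with $\Re z\ge0$. Applying $\mathbf P_{[c]}$ and using that $\mathcal L_{[c]}\mathbf P_{[c]}$ is nilpotent gives $\mathbf P_{[c]}g=0$, hence $\mathcal L_{[c]}g=zg$. This is excluded by Proposition \ref{prop:noembeig} if $z\neq0$, and by $\ker\mathcal L_{[c]}\subset\mathrm{Ran}\,\mathbf P_{[c]}$ if $z=0$.

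The Fredholm argument then runs uniformly on a closed strip $\{0\le\Re z<\varepsilon\}$, including $z=0$, with no threshold expansion. Its ingredients are compactness on $Y_s$ of the perturbation of the free resolvent, continuity in $z$, and decay of the perturbation as $|z|\to\infty$.

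\paragraph{Secondary point: justifying the Fourier transform.} Theorem \ref{thm:pwdisp} holds only in $L^2_a$ with $a>0$, and $L^{2,\sigma+1}\not\subset L^2_a$, so your justification does not work as written. The clean route is to take $f\in C_c^\infty((0,+\infty)\times\R)$ by density. Then $f(s)\in L^2_a\subset\widetilde\Sigma$, and the shifted semigroup decays exponentially in $L^2_a$. This allows Plancherel directly on $\Re z=0$, with no limit $\mu\to0^+$ and no information needed on the growth bound of $e^{t\mathcal L_{[c]}}$ in $L^2$.
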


Proposition \ref{prop:smooth} is a consequence of the following, more general result,  related to \cite{beceanuDuke}.   
\begin{proposition}\label{prop:smooth-0} For  any   $ p>1$, $c>c^*(p)$ and $\sigma>1/2$  there is a   $C(  p,c,\sigma)>0$ such that 
   \begin{align}\label{eq:smooth1-shifted} &
      \left \| \int _0 ^t    e^{  (t-s) \mathcal{L}_{[c]} -(t-s)\gamma ^{3/2}c\mathbf{P} _{[c]} }   f(s) ds  \right \| _{L^2 ( \R _+ ,    \widetilde{\Sigma}) }\le C(  p,c,\sigma)  \|     f \| _{ L^{ 2 }  ( \R  _+  ,    L^{ 2, \sigma + 1}( \R) ) }  .  
   \end{align}  
\end{proposition}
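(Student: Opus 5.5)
The plan is to derive \eqref{eq:smooth1-shifted} from Kato's smoothing theory, i.e.\ from uniform resolvent estimates for the generator $\mathcal{K}:=\mathcal{L}_{[c]}-\gamma^{3/2}c\mathbf{P}_{[c]}$ between the weighted spaces. The shift moves the generalized kernel into the stable half plane. On $\mathrm{Ran}\,\mathbf P_{[c]}=\Span\{\xi_1,\xi_2\}$ the operator $\mathcal K$ acts as the Jordan block of $\mathcal L_{[c]}$ minus $\gamma^{3/2}c$, with the single eigenvalue $-\gamma^{3/2}c<0$. Since $\mathbf P_{[c]}$ commutes with $\mathcal L_{[c]}$ by \eqref{eq:M2.4}, $\mathcal K$ commutes with $\mathbf P_{[c]}$, and we can write
\[
e^{t\mathcal K}=e^{-t\gamma^{3/2}c}e^{t\mathcal L_{[c]}}\mathbf P_{[c]}+e^{t\mathcal L_{[c]}}\mathbf Q_{[c]} .
\]
The first piece is trivially bounded: $\mathbf P_{[c]}$ is a rank two operator with exponentially decaying range and with $\eta_i[c]$ bounded (note that $\eta_1$ tends to a constant at $+\infty$, which is still fine against $L^{2,\sigma+1}$ since $\sigma+1>1/2$), and $\|e^{t\mathcal L_{[c]}}\mathbf P_{[c]}\|\lesssim \<t\>$. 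Hence its Duhamel integral is bounded by the convolution of $\|f(s)\|_{L^{2,\sigma+1}}$ with $\<t\>e^{-\gamma^{3/2}ct}\in L^1$, and Young's inequality closes this piece. The real content is therefore the $\mathbf Q_{[c]}$ part, i.e.\ Proposition \ref{prop:smooth}. I would nevertheless argue for $\mathcal K$ as a whole, because its resolvent has no pole at $0$.

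By Plancherel in time (Kato's lemma, or the standard $TT^*$ argument after extending $f$ by zero to $t<0$), the Duhamel bound follows from the uniform estimate
\[
\sup_{\Re z>0}\|\sech(\sqrt\gamma\kappa x)\,R_{\mathcal K}(z)\,\<x\>^{-\sigma-1}\|_{\mathcal L(L^2)}<\infty .
\]
Here the Laplace transform in $t$ of the Duhamel integral is $R_{\mathcal K}(z)\hat f(z)$ with $z=\epsilon+\im\tau$; we let $\epsilon\to0^+$ and apply Plancherel. So the key steps, in order, are as follows.

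\emph{Step 1: the free operator.} We have $\mathcal L^{(0)}_{[c]}=R_{-\partial_x^2}(-1)\partial_x(-c\partial_x^2+c-1)$, with Fourier symbol $\im\xi\,(c\xi^2+c-1)/(1+\xi^2)$, which is purely imaginary. Compute $R_{\mathcal L^{(0)}}(z)$ explicitly: $z-\mathcal L^{(0)}$ is, up to the invertible factor $(1-\partial_x^2)^{-1}$, the third order ODE operator $z(1-\partial_x^2)+\partial_x(c\partial_x^2-(c-1))$. Its kernel is a sum of exponentials $e^{\mu_j(z)(x-y)}$, where $\mu_j$ are the roots of $c\mu^3-z\mu^2-(c-1)\mu+z=0$ (the roots analyzed in Lemma \ref{lem:roots1}, with $\kappa_c$). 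For $\Re z\ge0$ the one root with nonpositive real part, $\mu\approx-\sqrt\gamma$, is uniformly separated from the imaginary axis, and this is why the weight $\sech(\sqrt\gamma\kappa x)$ appears. The other roots can approach $\im\R$, but only where $\tau=\Im z$ lies in the range of the symbol. There the kernel is bounded and oscillatory, and a weight $\<x\>^{-\sigma}$ with $\sigma>1/2$ on both sides gives an $L^2$ bounded Hilbert--Schmidt operator. The extra power of $\<x\>$ on the right absorbs the factor $\partial_x$ and the degeneracy at $z=0$, where roots coalesce at $\mu=0$ (the symbol vanishes at $\xi=0$). At large $|z|$ the resolvent decays like $|z|^{-1}$.

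\emph{Step 2: perturbation.} Write $R_{\mathcal K}=R_{\mathcal L^{(0)}}(1-VR_{\mathcal L^{(0)}})^{-1}$ plus the finite rank correction from $\mathbf P$, where $V=-R_{-\partial_x^2}(-1)\partial_x f'(Q_c)$ is exponentially localized. The operator $VR_{\mathcal L^{(0)}}(z)$ is compact between exponentially weighted spaces, depends continuously on $z$ up to $\Re z=0$, and is small for large $|z|$. So the uniform bound follows from invertibility of $1-VR_{\mathcal L^{(0)}}(z)$ for every $\Re z\ge0$, via Fredholm theory. A failure of invertibility at $z\neq0$ produces an eigenvalue of $\mathcal L_{[c]}$ in $L^2_a$, or at $\Re z=0$ a resonance/eigenvalue. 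Proposition \ref{prop:noembeig} excludes eigenvalues. An embedded resonance (a bounded, non-$L^2$ solution) would have to be excluded directly: on $\im\R$ the oscillatory modes are matched at $\pm\infty$, and the conservation of a flux/Wronskian-type quantity forces the outgoing amplitude to vanish, so the solution is in $L^2$ and we are back to Proposition \ref{prop:noembeig}. At $z=0$, invertibility for $\mathcal K$ rather than $\mathcal L_{[c]}$ follows from Lemma \ref{lem:kernel}: the generalized kernel is exactly $\Span\{\xi_1,\xi_2\}$, and the shift moves it to $-\gamma^{3/2}c$.

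The main obstacle is Step 2 near $z=0$ and near the thresholds on $\im\R$. These are the points where roots $\mu_j(z)$ collide, so the free resolvent is singular and the weight $\<x\>^{-\sigma-1}$ is barely enough. I would first try to handle them by the explicit third order structure: decompose the free kernel into its decaying part (controlled by $\sech(\sqrt\gamma\kappa x)$) and its oscillatory parts, and show that the oscillatory parts, after composition with the exponentially localized $V$, are bounded uniformly with a Lipschitz dependence on $z$. That Lipschitz dependence gives the continuity needed for the Fredholm argument up to the boundary.
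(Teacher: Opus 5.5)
Your architecture is the paper's. You use the same splitting of $e^{t\mathcal K}$, a Fourier--Laplace transform in time that reduces \eqref{eq:smooth1-shifted} by Plancherel to a uniform bound for $R_{\mathcal K}$ from $L^{2,\sigma+1}$ to $\widetilde{\Sigma}$ (Proposition \ref{prop:smooth2}), and a Fredholm argument for a compact perturbation of the explicit Green's function built from the three roots of the cubic. Kernel elements are then excluded by Proposition \ref{prop:noembeig}, whose $L^2_a$ part settles the embedded case by appeal to Pego--Weinstein, in the spirit of your flux argument.

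There is one difference in the reduction. The paper applies the transform directly on $\im\R$, using the exponential decay of $e^{t\mathcal K}$ in $L^2_a$ from Theorem \ref{thm:pwdisp}, so it needs the resolvent bound only on $\im\R\setminus\{0\}$. Your limit $\Re z\downarrow 0$ needs the Laplace representation of $R_{\mathcal K}(z)$ for every $\Re z>0$, and the resolvent bound on the open half plane. The former requires a nonpositive growth bound for $e^{t\mathcal K}$ in a space on which $\mathbf P_{[c]}$ is bounded. You do not name that space. It cannot be $L^2$, because $\eta_1[c]$ tends to a (generally nonzero) constant at $+\infty$; the natural choice is again $L^2_a$ with Theorem \ref{thm:pwdisp}.

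More importantly, your description of the free resolvent contains errors that misplace the difficulty. At $z=0$ the roots of $c\mu^3-z\mu^2-(c-1)\mu+z$ are $0,\pm\sqrt\gamma$, so nothing coalesces. There are also no thresholds on $\im\R$: $\xi\mapsto\xi(c\xi^2+c-1)/(1+\xi^2)$ is an increasing bijection of $\R$, so each $z\in\im\R$ has exactly one imaginary root. Lemma \ref{lem:roots1} keeps the three roots simple, analytic and with uniformly separated real parts on a strip around $\im\R$. Hence the free kernel is continuous (even H\"older) in $z$ up to and including $z=0$, and the singularity you single out as the main obstacle does not exist.

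Also, the full free resolvent does \emph{not} decay like $|z|^{-1}$ on $\im\R$. At high frequency $\mathcal L^{(0)}_{[c]}$ behaves like $c\partial_x$, and the coefficient of the oscillating mode in its kernel tends to $1/c$ in modulus. The paper gets smallness at infinity by factoring out $(1-\gamma\partial_x^2)^{-1}\widetilde A^{(0)}_{[c,z]}$ in \eqref{eq:smooth23large}. The perturbation then only meets $R_0(z)$ and $\partial_xR_0(z)$, which are $O(\langle z\rangle^{-1})$ by Lemma \ref{lem:R0-L1-L2ms}.

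The genuinely delicate point is one you do not mention. The Green's function is bounded but not decaying on one side (the $1_{\R_-}$ part, carried by the imaginary root), while $\eta_1[c]$ does not decay at $+\infty$. So the finite rank term in the Fredholm equation involves $\langle\eta_1[c],R_0(z)f\rangle$ and requires $R_0(z)f\in L^1(\R_+)$. This is where the extra power in $L^{2,\sigma+1}$ is really used, through $\int_0^\infty y|f(y)|\,dy$, and it is why the paper runs the Fredholm argument in $Y_s$. Your remark that the $\eta_1$ pairing is harmless ``since $\sigma+1>1/2$'' only covers $\langle\eta_1[c],f\rangle$.

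The same one-sided structure is what places any kernel element in $L^2_{\widehat a}$. The paper needs this to rule out $z=0$ through Lemma \ref{lem:kernel} in $L^2_a$, since at $z=0$ the operator $\mathcal L_{[c]}$ does have bounded non-$L^2$ solutions $g$ with $L_cg$ constant. With these corrections your plan goes through.
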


\begin{proof}
Here $\gamma^{3/2}c$ could be replaced by any positive
number. The proof is similar to the arguments in
\cite{CM25D1}.
By \cite{MillerWeinsteinBBM},
$e^{t\mathcal L_{[c]}}$, for $t\ge0$, is a $C_0$
semigroup in $L^2_a$. Since $\mathbf P_{[c]}$ is bounded
on $L^2_a$ and commutes with $\mathcal L_{[c]}$,
the operator
$\mathcal L_{[c]}-\gamma^{3/2}c\mathbf P_{[c]}$
also generates a $C_0$ semigroup in this space.
Using $\mathcal L_{[c]}^2\mathbf P_{[c]}=0$, we have
$$
\begin{aligned}
e^{t\mathcal L_{[c]}-t\gamma^{3/2}c\mathbf P_{[c]}}
={}&e^{t\mathcal L_{[c]}}\mathbf Q_{[c]}+e^{-t\gamma^{3/2}c}
\left(\mathbf P_{[c]}
+t\mathcal L_{[c]}\mathbf P_{[c]}\right) .
\end{aligned}
$$
Consequently, by Theorem \ref{thm:pwdisp}, this
semigroup decays exponentially in $L^2_a$.
Thus, see \cite[p. 240]{MR617913}, its resolvent
in $L^2_a$ is given by
\begin{equation}\label{eq:forresol}
\begin{aligned}
&R_{\mathcal L_{[c]}-\gamma^{3/2}c\mathbf P_{[c]}}(z)w_0=
-\int_0^{+\infty}
e^{-zt}
e^{t\mathcal L_{[c]}-t\gamma^{3/2}c\mathbf P_{[c]}}
w_0\,dt  \text{ for 
$\Re z>-\min\{b(a,p,c),\gamma^{3/2}c\}$.}
\end{aligned}
\end{equation}
Initially take
$f\in C_c^\infty((0,+\infty)\times\R)$.
Choose $a>0$ sufficiently small that
$L^2_a\subset\widetilde{\Sigma}$.
The exponential decay of the  semigroup
and the compact support of $f$ justify Fubini's theorem
in $L^2_a$,  giving
$$
\begin{aligned}
&\int_0^\infty e^{-\im\lambda t}
\int_0^t
e^{(t-s)\mathcal L_{[c]}
-(t-s)\gamma^{3/2}c\mathbf P_{[c]}}
f(s)\,ds\,dt\\
&\quad=
\int_0^\infty e^{-\im\lambda s}
\int_0^\infty e^{-\im\lambda r}
e^{r\mathcal L_{[c]}-r\gamma^{3/2}c\mathbf P_{[c]}}
f(s)\,dr\,ds =
-R_{\mathcal L_{[c]}-\gamma^{3/2}c\mathbf P_{[c]}}
(\im\lambda)\widehat f(\lambda).
\end{aligned}
$$
Applying Plancherel and Proposition \ref{prop:smooth2},
see below, gives
\begin{equation}\label{eq:fourtran}
\begin{aligned}
&\sqrt{2\pi}
\left\|
\int_0^t
e^{(t-s)\mathcal L_{[c]}
-(t-s)\gamma^{3/2}c\mathbf P_{[c]}}
f(s)\,ds
\right\|_{L^2(\R_+,\widetilde{\Sigma})} =
\left\|
R_{\mathcal L_{[c]}-\gamma^{3/2}c\mathbf P_{[c]}}
(\im\lambda)\widehat f(\lambda)
\right\|_{L^2(\R,\widetilde{\Sigma})} \\
&\le
C(p,c,\sigma)
\|\widehat f\|_{L^2(\R,L^{2,\sigma+1})} =
\sqrt{2\pi}\,C(p,c,\sigma)
\|f\|_{L^2(\R_+,L^{2,\sigma+1})}.
\end{aligned}
\end{equation}
By density, this extends to all 
$f\in L^2(\R_+,L^{2,\sigma+1})$ and proves
\eqref{eq:smooth1-shifted}.
\end{proof}

The following is proved in \S \ref{sec:proofsmooth2}.

\begin{proposition}\label{prop:smooth2} 
 For any  $ p>1$,  $c>c^*(p)$ and $\sigma>1/2$   there is a   $C(  p,c,\sigma)>0$ such that
  \begin{align}\label{eq:smooth21}&
     \sup  _{\lambda \in \R \backslash \{ 0  \}} \left \|    R_{\mathcal{L}_{[c]}-\gamma^{3/2}c\mathbf P_{[c]} }     ( \im \lambda   )       \right \| _{ \mathcal{L}\left (  L^{ 2, \sigma + 1}( \R) ,   \widetilde{\Sigma} \right )   }\le C( p,c,\sigma). 
   \end{align}
\end{proposition}

\section{Bounding $\| v \| _{\widetilde{\Sigma}}$  by Kato smoothing}\label{sec:smooth}

We prove Lemma \ref{prop:smooth11} assuming Proposition \ref{prop:smooth2}.
Multiplying equation \eqref{eq:gKdV3}  by $\zeta_{B}$
we obtain
\begin{align}     \nonumber
    \partial _t   \(  \zeta_{B}v    \)  &= \mathcal{L}_{[c_0]}    \(  \zeta_{B}v    \)  + [\zeta_{B}, \mathcal{L} _{[c_0]}]v
 + (\dot D -c )  \zeta_{B}  (   \xi _1[c]+   v' )- \dot c \  \zeta_{B}  \xi _2[c] \nonumber
\\&
+    \zeta_{B} \(   \mathcal{L}_{[c ]} -  \mathcal{L}_{[c_0]}   \) v     -   \zeta_{B} R_{-\partial _x ^2} (-1)\partial _x    \left ( f( Q   _{c} + v  )  -  f( Q  _{c} )-  f'(Q  _{c} ) v\right )     .   \label{eq:gKdV14}
\end{align}
Next we consider the spectral decomposition
 \begin{align}\label{eq:defw}
    \zeta_{B}v= \mathbf{P}_{[c_0]} \zeta_{B}v+ w \text { where }  w:=  \mathbf{Q}_{[c_0]} \zeta_{B}v .
 \end{align}
By essentially the same proof of \cite{CM25D1}, which we skip here,  we have the following.
\begin{lemma}
  \label{lem:disczbv} We have
\begin{align}
   \|   \mathbf{P}_{[c_0]} \zeta_{B}v \| _{L^2}\lesssim B ^{-1}  (  \| v  \|_{  { \Sigma }_{1A A_1}} + \| v  \|_{  { \Sigma }_{2A A_1}}  ) .\label{eq:disczbv}
\end{align}
\end{lemma}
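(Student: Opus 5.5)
The plan is to use the explicit form of $\mathbf P_{[c_0]}$ in \eqref{eq:projections} and estimate each coefficient:
\begin{align*}
\|\mathbf P_{[c_0]}\zeta_B v\|_{L^2}\lesssim \sum_{j=1,2}\left|\langle \eta_j[c_0],\zeta_B v\rangle\right|,
\end{align*}
since $\xi_1[c_0],\xi_2[c_0]\in L^2$ are fixed. Everything then reduces to showing that each pairing is $O(B^{-1})$ times $\|v\|_{\Sigma_{1AA_1}}+\|v\|_{\Sigma_{2AA_1}}$. The tools are the modulation orthogonality conditions of Lemma \ref{lem:mod1} for the actual parameter $c=c(t)$:
\begin{align*}
\langle v,\zeta_B\eta_1[c]\rangle=0,\qquad \langle v,\eta_2[c]\rangle=0.
\end{align*}

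For $j=2$, write
\begin{align*}
\langle\eta_2[c_0],\zeta_Bv\rangle=\langle(\zeta_B-1)\eta_2[c_0],v\rangle+\langle\eta_2[c_0]-\eta_2[c],v\rangle.
\end{align*}
The function $\eta_2[c_0]=\theta_3(1-\partial_x^2)Q_{c_0}$ decays like $e^{-\sqrt{\gamma_0}|x|}$, while $1-\zeta_B$ vanishes on $[-B,B]$. Hence $(1-\zeta_B)\eta_2[c_0]$ is $O(e^{-\sqrt{\gamma_0}B/2})$ against the weight $\zeta_A^{-1}$, and $\langle(\zeta_B-1)\eta_2[c_0],v\rangle\lesssim e^{-\kappa B}\|\zeta_A v\|_{L^2}$. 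Summing the $\vartheta_{iA_1}$ pieces gives a bound by the two $\Sigma$ seminorms, and this is far better than $B^{-1}$.

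For $j=1$, the orthogonality already contains $\zeta_B$, so
\begin{align*}
\langle\eta_1[c_0],\zeta_Bv\rangle=\langle\zeta_B(\eta_1[c_0]-\eta_1[c]),v\rangle.
\end{align*}
The same $c_0$ versus $c$ difference therefore appears in both cases.

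The main obstacle is this parameter difference. By analyticity in $c$, each of $\eta_j[c_0]-\eta_j[c]$ is bounded by $|c-c_0|$ times a function with exponential decay. There is one exception: the antiderivative part $\int_{-\infty}^x\partial_cQ_c$ in $\eta_1$ tends to the constant $\partial_c\|Q_c\|_{L^1}$ as $x\to+\infty$ and does not decay. This is exactly why $\zeta_B$ is placed in the modulation condition. Against the constant tail, I would use $\zeta_B\le e^{-|x|/B}$ and $\zeta_A^{-1}\le e^{|x|/A}$ with $A\gg B$. Then
\begin{align*}
\|\zeta_B\zeta_A^{-1}\|_{L^2}\lesssim B^{1/2},
\end{align*}
so the tail contributes at most $|c-c_0|B^{1/2}\|\zeta_Av\|_{L^2}$.

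It then remains to make $|c-c_0|$ small compared with $B^{-3/2}$. I would do this in one of two ways. The first is through \eqref{eq:mod2}, which gives $|c-c_0|\lesssim\epsilon$, combined with $\epsilon\ll\epsilon_0\ll B^{-3/2}$ from Lemma \ref{lem:mod1} and the hierarchy \eqref{eq:relABg}. The alternative, if the proof instead projects with $\mathbf P_{[c(t)]}$, is to pay $|c-c_0|$ and absorb it the same way. Either route makes the total at most $B^{-1}$ times the seminorms, which concludes \eqref{eq:disczbv}.
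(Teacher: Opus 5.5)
Your route is the natural one, and it appears to be what the paper intends when it defers to \cite{CM25D1}. You expand $\mathbf P_{[c_0]}$, use the modulation conditions $\langle v,\zeta_B\eta_1[c]\rangle=\langle v,\eta_2[c]\rangle=0$, and split each coefficient into a cutoff error and a $c$ versus $c_0$ error. The non-decaying tail of $\eta_1$ is handled through $\|\zeta_B\zeta_A^{-1}\|_{L^2}\lesssim B^{1/2}$ and the smallness of $|c-c_0|$.

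One step is false as written, though. For $j=2$ you claim $1-\zeta_B$ vanishes on $[-B,B]$, so that $\langle(\zeta_B-1)\eta_2[c_0],v\rangle=O(e^{-\kappa B})$. In \eqref{def:zetaC} the cutoff $\chi$ is not rescaled. Hence $\zeta_B\equiv 1$ only on $[-1,1]$, and $\zeta_B(x)=e^{-|x|/B}$ for $|x|\ge 2$. So $1-\zeta_B(x)$ is of size $|x|/B$ for $2\le|x|\ll B$, and this term is genuinely of order $B^{-1}$, not exponentially small. For the same reason, $\zeta_B\le e^{-|x|/B}$ holds only up to the harmless factor $e^{2/B}$.

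The repair is immediate, and it shows where the $B^{-1}$ in \eqref{eq:disczbv} actually comes from. Since $0\le 1-\chi\le 1$ and $1-e^{-t}\le t$, we have $0\le 1-\zeta_B(x)\le |x|/B$. Therefore
\begin{align*}
|\langle(\zeta_B-1)\eta_2[c_0],v\rangle|\le B^{-1}\|\langle x\rangle\zeta_A^{-1}\eta_2[c_0]\|_{L^2}\|\zeta_Av\|_{L^2}\lesssim B^{-1}\|\zeta_Av\|_{L^2}.
\end{align*}
Here $\eta_2[c_0]$ decays exponentially at a rate independent of $A$, while $\zeta_A^{-1}\le e^{|x|/A}$.

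The parameter-difference terms are then only a much smaller error, of size $O(|c-c_0|B^{1/2})$ times $\|\zeta_Av\|_{L^2}$. They are not the source of the $B^{-1}$ factor, as your write-up suggests. Finally, $\|\zeta_Av\|_{L^2}\le\|v\|_{\Sigma_{1AA_1}}+\|v\|_{\Sigma_{2AA_1}}$ by the partition of unity, and the lemma follows.
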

\qed

We now focus on the   $w$   in \eqref{eq:defw}. Applying the projection $\mathbf Q_{[c_0]} $   to   \eqref{eq:gKdV14}
gives  
\begin{align}
   \dot w  &= \mathcal{L}_{[c_0]}   w +  \mathbf{Q}_{[c_0]} [\zeta_{B}, \mathcal{L}_{[c_0]} ]v \nonumber
\\& + (\dot D -c )   \mathbf{Q}_{[c_0]} \zeta_{B}  v'+
( \dot D -c )   \mathbf{Q}_{[c_0]} \zeta_{B}   \xi _1[c] - \dot c \   \mathbf{Q}_{[c_0]}  \zeta_{B} \xi _2[c]  \label{eq:gKdV22}
\\&
+   \mathbf{Q}_{[c_0]}
\zeta_{B} \(   \mathcal{L}_{[c ]} - \mathcal{L}_{[c_0]}   \) v   \label{eq:gKdV23}  \\&  -  \mathbf{Q} _{[c_0]}  \zeta_{B} R_{-\partial _x ^2} (-1)\partial _x    \left ( f( Q   _{c} + v  )  -  f( Q  _{c} )-  f'(Q  _{c} ) v\right )     .   \label{eq:gKdV24}
\end{align}
We next consider \small
\begin{align}\label{eq:expv1}
  w&=e^{  t \mathcal{L}_{[c_0]} }  w(0) +   \int _{0}^t e^{ (t-s) \mathcal{L} _{[c_0]}} \mathbf{Q} _{[c_0]}[\zeta_{B}, \mathcal{L}_{[c_0]} ]v ds  +     \int _{0}^t e^{  (t-s)\mathcal{L} _{[c_0]}} \text{lines \eqref{eq:gKdV22}--\eqref{eq:gKdV24}} ds.
\end{align}
\normalsize
The following can be proved almost exactly like in \cite{CM25D1}, so we skip the proof.

\begin{lemma}\label{lem:w0} We have
   \begin{align*}
     & \| e^{  t \mathcal{L}_{[c_0]} }  w(0)\| _{L^2 (\R _+, \widetilde{\Sigma})}\le  \sqrt{\epsilon} , \\& 
     \left  \|  \int _{0}^t e^{  (t-s)\mathcal{L}_{[c_0]} } \text{line  \eqref{eq:gKdV22}} ds\right \| _{L^2 (I, \widetilde{\Sigma})}\lesssim   \|  \dot D -c \| _{L^2 \( I  \) }+ \|  \dot  c \| _{L^2 \( I  \) }   , \\& 
     \left  \|  \int _{0}^t e^{  (t-s)\mathcal{L}_{[c_0]} } \text{line  \eqref{eq:gKdV23}} ds\right \| _{L^2 (I, \widetilde{\Sigma})}\lesssim  \sqrt{ \epsilon}  (\| v \| _{L^2(I,  { \Sigma }_{1A A_1}   )}+\| v \| _{L^2(I,  { \Sigma }_{2A A_1}   )}),\\&   \left  \|  \int _{0}^t e^{  (t-s)\mathcal{L}_{[c_0]} } \text{line  \eqref{eq:gKdV24}} ds\right \| _{L^2 (I, \widetilde{\Sigma})}\lesssim   \epsilon ^{  \min\{ \frac{p-1}{2} , 1/2\}   } (\| v \| _{L^2(I,  { \Sigma }_{1A A_1}   )}+\| v \| _{L^2(I,  { \Sigma }_{2A A_1}   )}) .
   \end{align*}
\end{lemma}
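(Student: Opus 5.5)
The plan is to treat the four estimates of Lemma \ref{lem:w0} separately. In each case the Duhamel integral is rewritten so that Proposition \ref{prop:smooth} applies, or, for the homogeneous term, so that Theorem \ref{thm:pwdisp} applies. The inhomogeneous terms are then bounded in $L^2(I,L^{2,\sigma+1})$ with, say, $\sigma=1$.

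\textbf{Homogeneous term.} Since $w(0)=\mathbf Q_{[c_0]}w(0)$, estimate \eqref{eq:pwdisp1} gives
$$\|e^{t\mathcal L_{[c_0]}}w(0)\|_{L^2_a}\lesssim e^{-bt}\|w(0)\|_{L^2_a}.$$
We choose $a$ so small that $L^2_a\subset\widetilde\Sigma$ and $a\ll B^{-1}$. Then $\|\zeta_Bv(0)\|_{L^2_a}\lesssim\|v(0)\|_{L^2}\lesssim\epsilon$, because $\zeta_B e^{a|x|}$ is bounded. The boundedness of $\mathbf Q_{[c_0]}$ on $L^2_a$ then gives
$$\|e^{t\mathcal L_{[c_0]}}w(0)\|_{L^2(\R_+,\widetilde\Sigma)}\lesssim b^{-1/2}\epsilon\le\sqrt\epsilon .$$

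\textbf{Line \eqref{eq:gKdV22}.} The terms $\mathbf Q_{[c_0]}\zeta_B\xi_j[c]$ have coefficients in $L^2(I)$ and $\xi_j[c]\in L^{2,2}$ uniformly for $c\in J_0$, so Proposition \ref{prop:smooth} bounds them by $\|\dot D-c\|_{L^2(I)}+\|\dot c\|_{L^2(I)}$.

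The term $(\dot D-c)\mathbf Q_{[c_0]}\zeta_Bv'$ is the delicate one. Here $\zeta_Bv'$ only lies in $L^2$ with $\|\zeta_Bv'\|_{L^2}\lesssim B^{1/2}\epsilon$, and it has no $\langle x\rangle^{2}$ decay. I would apply Proposition \ref{prop:smooth} in the weaker $L^2_a$-based form, which is available because $\zeta_B$ gives $e^{-|x|/B}$ decay, or simply use $\langle x\rangle^{2}\zeta_B\lesssim B^2\zeta_{B/2}$. This yields a bound
$$\lesssim B^{2}\,\|v\|_{L^\infty H^1}\,\|\dot D-c\|_{L^2(I)}\lesssim B^{5/2}\epsilon\,\|\dot D-c\|_{L^2(I)} ,$$
which is dominated by the right-hand side since $\epsilon\ll B^{-3}$.

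\textbf{Line \eqref{eq:gKdV23}.} We write
$$\mathcal L_{[c]}-\mathcal L_{[c_0]}=(c-c_0)R_{-\partial_x^2}(-1)\partial_x(1-\partial_x^2)-R_{-\partial_x^2}(-1)\partial_x\bigl(f'(Q_c)-f'(Q_{c_0})\bigr).$$
The first piece equals $(c-c_0)\partial_x$, so after multiplication by $\zeta_B$ it gives $(c-c_0)\zeta_Bv'$. The second piece, after commuting $\zeta_B$ through the resolvent kernel \eqref{eq:intker}, is a localized operator applied to $v$ with norm $O(|c-c_0|)$. Orbital stability and \eqref{eq:mod2} give $|c-c_0|\lesssim\epsilon$.

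Using the weight $\langle x\rangle^{2}\zeta_B\lesssim B^2\zeta_A^{}\vartheta$ (recall $A\gg B$, so $\zeta_B\lesssim \zeta_A$ with extra decay), we get
$$\|\langle x\rangle^{2}\zeta_B(\mathcal L_{[c]}-\mathcal L_{[c_0]})v\|_{L^2}\lesssim B^2\epsilon\bigl(\|v\|_{\Sigma_{1AA_1}}+\|v\|_{\Sigma_{2AA_1}}\bigr).$$
Proposition \ref{prop:smooth} then gives the bound with $B^2\epsilon\le\sqrt\epsilon$.

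\textbf{Line \eqref{eq:gKdV24}.} We first commute $\zeta_B$ inside $R_{-\partial_x^2}(-1)\partial_x$; this is again bounded on weighted spaces, since the kernel is $e^{-|x-y|}$. We then bound $\langle x\rangle^2\zeta_B|N(v)|$, where
$$N(v)=f(Q_c+v)-f(Q_c)-f'(Q_c)v .$$
For $1<p\le2$ we have $|N|\lesssim|v|^p$, and for $p>2$ we have $|N|\lesssim|v|^2$. In either case,
$$\|\langle x\rangle^2\zeta_BN\|_{L^2}\lesssim B^2\|v\|_{L^\infty}^{\min(p-1,1)}\|\zeta_Av\|_{L^2}\lesssim B^{2+\frac{\min(p-1,1)}{2}}\epsilon^{\min(p-1,1)}\bigl(\|v\|_{\Sigma_{1AA_1}}+\|v\|_{\Sigma_{2AA_1}}\bigr),$$
using $\|v\|_{L^\infty}\lesssim B^{1/2}\epsilon$. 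By the hierarchy \eqref{eq:relABg}, this is at most $\epsilon^{\min\{\frac{p-1}2,\frac12\}}$ times the $\Sigma$ norms. Proposition \ref{prop:smooth} concludes.

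\textbf{Main obstacle.} I expect the main obstacle to be the weight bookkeeping, namely converting $\zeta_B$-localized quantities into the $L^{2,\sigma+1}$ norm required by Proposition \ref{prop:smooth} while losing only powers of $B$. These losses are absorbed by $\epsilon$ only thanks to \eqref{eq:relABg}. The worst case is the $v'$ terms, where $H^1$ control of $v$, together with the fact that $\partial_x$ commutes with the resolvent, must be used carefully.
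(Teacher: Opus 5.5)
Your proposal is correct and matches the argument the paper intends when it defers to \cite{CM25D1}. For the free term you use Pego--Weinstein decay (Theorem \ref{thm:pwdisp}) in $L^2_a$ with $a\le B^{-1}$; its $a$-, hence $B$-dependent constants are absorbed by $\epsilon$ via \eqref{eq:relABg}. For the three inhomogeneous lines you use Proposition \ref{prop:smooth}, after trading $\langle x\rangle^{\sigma+1}\zeta_B$ for $B^{\sigma+1}\zeta_A$.

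One small slip: the correct bound is $\langle x\rangle^{2}\zeta_B\lesssim B^{2}\zeta_{2B}$, not $\zeta_{B/2}$, which decays faster than $\zeta_B$. It is then $\zeta_{2B}\le\zeta_A$ (from $A\gg B$) that you actually use.
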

\qed

\begin{lemma}\label{lem:gKdV21hard} For any fixed $\sigma >1/2$
   \begin{align}\label{eq:gKdV21hard1}
     \left  \|  \int _{0}^t e^{  (t-s)\mathcal{L}_{[c_0]}}  \mathbf{Q} _{[c_0]}  [\zeta_{B}, \mathcal{L} _{[c_0]}] v     ds\right \| _{L^2 (I, \widetilde{\Sigma})}\lesssim   A_1 ^{\sigma + 1} B^{-1}    \| v \| _{L^2(I,  { \Sigma }_{1A A_1}   )}+ B ^{\sigma  } \| v \| _{L^2(I,  { \Sigma }_{2A A_1}   )} .
   \end{align}
 \end{lemma}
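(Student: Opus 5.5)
The plan is to deduce \eqref{eq:gKdV21hard1} directly from the Kato smoothing estimate of Proposition \ref{prop:smooth}, applied with $c=c_0$ and forcing term $f(s):=[\zeta_B,\mathcal L_{[c_0]}]v(s)\,1_I(s)$. Extending by zero outside $I$ is harmless, because the Duhamel integral is causal: for $t\in I$ it only involves $s\in[0,t]\subset I$. This gives
\[
\Big\|\int_0^t e^{(t-s)\mathcal L_{[c_0]}}\mathbf Q_{[c_0]}[\zeta_B,\mathcal L_{[c_0]}]v\,ds\Big\|_{L^2(I,\widetilde\Sigma)}\lesssim \big\|[\zeta_B,\mathcal L_{[c_0]}]v\big\|_{L^2(I,L^{2,\sigma+1})}.
\]
So everything reduces to the pointwise-in-time bound
\[
\|\langle x\rangle^{\sigma+1}[\zeta_B,\mathcal L_{[c_0]}]v\|_{L^2}\lesssim A_1^{\sigma+1}B^{-1}\|v\|_{\Sigma_{1AA_1}}+B^{\sigma}\|v\|_{\Sigma_{2AA_1}}.
\]

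To prove it, I will write $\mathcal L_{[c_0]}=c_0\partial_x-R_{-\partial_x^2}(-1)\partial_x-R_{-\partial_x^2}(-1)\partial_x f'(Q_{c_0})$, as in the proof of Lemma \ref{lem:1stV1}. The commutator then splits into three pieces.
\begin{itemize}
\item The transport piece is $-c_0\zeta_B'v$.
\item The second piece is $[\zeta_B,R_{-\partial_x^2}(-1)\partial_x]v$. Since $\partial_x$ and the resolvent commute, after integrating by parts it has integral kernel $-\tfrac12\partial_y e^{-|x-y|}\,(\zeta_B(x)-\zeta_B(y))$ plus a term $R_{-\partial_x^2}(-1)(\zeta_B'v)$.
\item The third piece has kernel $\tfrac12\partial_x e^{-|x-y|}(\zeta_B(x)-\zeta_B(y))f'(Q_{c_0}(y))$.
\end{itemize}
Note that $v'$ never appears, which is essential since only $v$ itself is controlled with weights in $L^{2,\sigma+1}$.

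The key elementary facts come from \eqref{def:zetaC}. First, $\zeta_B'$ vanishes on $[-1,1]$ and satisfies $|\zeta_B'|\lesssim B^{-1}\zeta_B$. Second, $|\zeta_B(x)-\zeta_B(y)|\lesssim B^{-1}|x-y|e^{|x-y|/B}\zeta_B(y)$. Third, $\langle x\rangle^{\sigma+1}\lesssim \langle y\rangle^{\sigma+1}e^{|x-y|/4}$. With these, every kernel is majorized by $B^{-1}e^{-|x-y|/2}\langle y\rangle^{\sigma+1}\zeta_B(y)|v(y)|$, so Young's convolution inequality reduces all terms except the potential one to bounding $B^{-1}\|\langle x\rangle^{\sigma+1}\zeta_B v\|_{L^2}$.

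I will then insert the partition $1=\vartheta_{1A_1}+\vartheta_{2A_1}$.
\begin{itemize}
\item On $\supp\vartheta_{1A_1}\subset\{|x|\le 2A_1\}$ we have $\langle x\rangle^{\sigma+1}\lesssim A_1^{\sigma+1}$ and $\zeta_B\le1\lesssim\zeta_A$, because $A_1\ll A$. This gives $A_1^{\sigma+1}B^{-1}\|\vartheta_{1A_1}\zeta_Av\|_{L^2}$.
\item On $\supp\vartheta_{2A_1}$ we use $\zeta_B/\zeta_A=e^{-|x|(B^{-1}-A^{-1})}\le e^{-|x|/(2B)}$, valid because $A\gg B$. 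Together with $\sup_x\langle x\rangle^{\sigma+1}e^{-|x|/(2B)}\lesssim B^{\sigma+1}$, this gives $B^{\sigma}\|\vartheta_{2A_1}\zeta_A v\|_{L^2}$.
\end{itemize}
The potential piece is easier. Because of the factor $f'(Q_{c_0}(y))\lesssim e^{-\kappa|y|}$, its kernel is bounded by $B^{-1}e^{-|x-y|/2}e^{-\kappa|y|/2}$ times polynomial weights. It is therefore controlled by $B^{-1}\|e^{-\kappa|y|/4}v\|_{L^2}$, which splits under the partition into $B^{-1}\|v\|_{\Sigma_{1AA_1}}$ plus a negligible $A_1^{-100}\|v\|_{\Sigma_{2AA_1}}$ term.

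I expect the main obstacle to be the bookkeeping in the second bullet: transferring the polynomial weight $\langle x\rangle^{\sigma+1}$ across the nonlocal kernel while keeping exactly one factor $B^{-1}$ and the loss of $B^{\sigma}$, not $B^{\sigma+1}$. This requires using the derivative structure $\zeta_B(x)-\zeta_B(y)$ rather than the cruder bound $\zeta_B\le1$. It also requires checking that the factor $e^{|x-y|/B}$ is absorbed by $e^{-|x-y|}$.
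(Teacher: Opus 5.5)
Your proposal is correct and follows the paper's proof: you reduce via Proposition \ref{prop:smooth} to a pointwise-in-time $L^{2,\sigma+1}$ bound on $[\zeta_B,\mathcal L_{[c_0]}]v$, then use $|\zeta_B'|\lesssim B^{-1}\zeta_B$ and the partition $1=\vartheta_{1A_1}+\vartheta_{2A_1}$ to get exactly $A_1^{\sigma+1}B^{-1}\|v\|_{\Sigma_{1AA_1}}+B^{\sigma}\|v\|_{\Sigma_{2AA_1}}$. The only difference is bookkeeping: you write the commutator through kernels carrying $\zeta_B(x)-\zeta_B(y)$ and make the weight transfer explicit, whereas the paper expands the commutator algebraically into terms with $\zeta_B',\zeta_B'',\zeta_B'''$ and treats only the representative $\zeta_B'R_{-\partial_x^2}(-1)L_{c_0}v$ term; also, the extra $R_{-\partial_x^2}(-1)(\zeta_B'v)$ term you mention actually cancels, since $[\zeta_B,R_{-\partial_x^2}(-1)\partial_x]$ is exactly the kernel term, though keeping it would do no harm.
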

\begin{proof}
By elementary computations we have  \small
\begin{align*}
  &[\zeta_{B}, \mathcal{L} _{[c_0]}] v = c_0 R_{-\partial _x ^2} (-1) \partial _x (\zeta _B'' +2\zeta _B' \partial _x)v
  -\zeta _B'R_{-\partial _x ^2} (-1)  L_{c_0}v \\& 
  - R_{-\partial _x ^2} (-1) (\zeta _B''' +2\zeta _B'' \partial _x)R_{-\partial _x ^2} (-1)   L_{c_0}v
  - R_{-\partial _x ^2} (-1) (\zeta _B'' +2\zeta _B' \partial _x) R_{-\partial _x ^2} (-1)\partial _xL_{c_0}v.
\end{align*}
\normalsize
We will bound only the contribution of the term $\zeta _B'R_{-\partial _x ^2} (-1)  L_{c_0}v$  since other ones are either simpler, when they involve $\zeta _B^{(k)}$ for $k\ge2$, or can be reduced to similar computations.  We have 
\begin{align} \nonumber
  &\left  \|  \int _{0}^t e^{  (t-s)\mathcal{L}_{[c_0]}}  \mathbf{Q} _{[c_0]}  \zeta _B'R_{-\partial _x ^2} (-1)  L_{c_0}v     ds\right \| _{L^2 (I, \widetilde{\Sigma})}\lesssim \|     \zeta _B'R_{-\partial _x ^2} (-1)  L_{c_0}v \| _{ L^{ 2 }  ( I  ,    L^{ 2, \sigma + 1}( \R) ) }\\& \le
  c_0\|     \zeta _B' v \| _{ L^{ 2 }  ( I  ,    L^{ 2, \sigma + 1}( \R) ) }+
  c_0\|     \zeta _B'     e ^{-|\cdot |}*v \| _{ L^{ 2 }  ( I  ,    L^{ 2, \sigma + 1}( \R) ) }\label{eq:gKdV21hard11}\\& +
    c_0\|     \zeta _B'     e ^{-|\cdot |}* (f'(Q_{c_0})   v )\| _{ L^{ 2 }  ( I  ,    L^{ 2, \sigma + 1}( \R) ) }.\label{eq:gKdV21hard12}
\end{align}
By $|\zeta _B' |\lesssim B^{-1}\zeta _B$  and using $1= \vartheta _{1A_1} + \vartheta _{2A_1} $, we have
\begin{align*}
  & 
   \|  \langle  x\rangle ^{ \sigma + 1}   \zeta _B' (\vartheta _{1A_1} + \vartheta _{2A_1}) v \| _{ L^{ 2 }  ( I  ,    L^{ 2}( \R) ) }  \lesssim  
    A_1 ^{\sigma + 1} B^{-1}    \| v \| _{L^2(I,  { \Sigma }_{1A A_1}   )}+ B ^{\sigma  } \| v \| _{L^2(I,  { \Sigma }_{2A A_1}   )}.
\end{align*}
The following implies that the second term in \eqref {eq:gKdV21hard11} is similar, 
\begin{align*}
  & 
   \|  \langle  x\rangle ^{ \sigma + 1}   \zeta _B'  e ^{-|\cdot |}* v \| _{     L^{ 2}( \R)  }  \lesssim  B^{-1} \|  \langle  x\rangle ^{ \sigma + 1}   \zeta _B     v \| _{     L^{ 2}( \R)  } .
\end{align*}
The term in \eqref {eq:gKdV21hard12} has smaller upper bound, thanks to the exponential decay of $f'(Q_{c_0})$.

\end{proof}

The following is essentially the same as the corresponding argument in \cite{CM25D1}.

\textit{Proof of Lemma \ref{prop:smooth11}.} From   Lemmas \ref{lem:lemdscrt}, \ref{lem:w0} and  \ref{lem:gKdV21hard}
we have, for $a_+:=\min  \{ \frac{p-1}{2} , 1/4  \}$
\begin{align*}
  \| w \| _{L^2(I, \widetilde{\Sigma}   )} \lesssim A_1 ^{\sigma + 1} B^{-1}    \| v \| _{L^2(I,  { \Sigma }_{1A A_1}   )}+ B ^{\sigma  } \| v \| _{L^2(I,  { \Sigma }_{2A A_1}   )} + \epsilon ^{a_+} .
\end{align*}
 Using  Lemma \ref{prop:1virial}, \eqref{eq:relABg}, decomposition \eqref{eq:defw} and inequality \eqref{eq:disczbv}, we have the following
\begin{align*}
  & \| v \| _{L^2(I, \widetilde{\Sigma}   )}  \le      \| (1-\zeta_{B}) v \| _{L^2(I, \widetilde{\Sigma}   )}  + \| \zeta_{B} v \| _{L^2(I, \widetilde{\Sigma}   )}   \\& \lesssim B ^{-1} \(  \| v \| _{L^2(I,  { \Sigma }_{1A A_1}   )} + \| v \| _{L^2(I,  { \Sigma }_{2A A_1}   )}    \) + \| \mathbf P_{[c_0]}\zeta_Bv  \| _{L^2(I, \widetilde{\Sigma}   )} +\|   w \| _{L^2(I, \widetilde{\Sigma}   )}  \\& \lesssim   A_1 ^{\sigma + 1} B^{-1}    \| v \| _{L^2(I,  { \Sigma }_{1A A_1}   )}+ B ^{\sigma  } \| v \| _{L^2(I,  { \Sigma }_{2A A_1}   )} + \epsilon ^{a_+}\\& \lesssim  
   \left (   A_1 ^{\sigma + 1} B^{-1} + B ^{\sigma  }A_1^{-1} \right ) \|v\|_{L^2(I,\widetilde{\Sigma})}
+ \epsilon ^{a_+},
\end{align*}
which, for  $ A_1=B^{3/5}$  and $1/2 < \sigma < 3/5$, yields \eqref{eq:sec:smooth11}.
\qed

\section{Resolvent estimates: proof of {Proposition} \ref{prop:smooth2}   }  \label{sec:proofsmooth2}

We consider the scaling change
\begin{align}
    \label{eq:deltac}  \delta _c v:=\widetilde{v}   \text{ for $\widetilde{v}$ defined by }    v(y)=\widetilde{v} (\gamma ^{1/2} y). \end{align}
Besides the operator $\mathcal{L}_{[c]}$  we will consider
\begin{align}\label{eq:tildLin}
   &\widetilde{\mathcal{L}} _{[c]}: =(1-\gamma \partial _x ^2) ^{-1} \partial_xL_+
  \text{ where }
 L_+:=-\partial_x^2+1-f'(Q )  , \\&  \label{eq:ALin}  A_{[c,\lambda ]}:=\partial_xL_c-\lambda (1- \partial _x ^2)  \text{ and }
 \widetilde{A}   _{[c,\lambda ]}:=\partial_xL_+-\lambda (1- \gamma\partial _x ^2).
\end{align}
Furthermore  we split 
\begin{align}\label{eq:ALinsplit}
   & 
 \widetilde{A}   _{[c,\lambda ]} =   \widetilde{A}   _{[c,\lambda ]} ^{(0)}   +T     \text{ with } \widetilde{A}   _{[c,\lambda ]} ^{(0)}  
 :=- \partial _x ^3+\partial _x -\lambda (1- \gamma\partial _x ^2) \text{ and } T =-\partial_x\circ f'(Q).
\end{align}
It is elementary that  $ \delta _c \mathcal{L}_{[c]} =\gamma  ^{3/2}  c  \widetilde{\mathcal{L}} _{[c]}  \delta _c $ and 
$ \delta _c    A_{[c,\lambda ]} =\gamma  ^{3/2}  c   \widetilde A_{[c,z ]}  \delta _c $ for $\lambda = \gamma  ^{3/2}  c \ z $.
Notice that     $\delta _c : L^{2,s}(\R )\to L^{2,s}(\R )$,  $\delta _c:L^2_{a}(\R ) \to L^2_{\widehat a}(\R )$   for $a= \widehat a   \gamma  ^{1/2}  $  and $\delta _c:\widetilde{\Sigma}\to \widehat{\Sigma}$ are all isomorphisms. Furthermore  $\delta _c:L^2_{a}(\R ) \to L^2_{\widehat a}(\R )$  preserves the decompositions 
\begin{align}
     L^2 _a (\R )= N_g\left ( \mathcal{L}_{[c]} \right ) \oplus N_g^\perp \left ( \mathcal{L}^*_{[c]} \right ) \xrightarrow{\delta _c}  L^2 _{\widehat a} (\R )= N_g\left ( \widetilde{\mathcal{L}} _{[c]} \right ) \oplus N_g^\perp \left ( \widetilde{\mathcal{L}} _{[c]} ^*  \right ).
\end{align}
The projections associated to the direct sum on the right are $\widetilde {\mathbf{P}}_{[c]} = \delta _c \mathbf{P}_{[c]}\delta _c ^{-1} $
and   $\widetilde {\mathbf{Q}}_{[c]} = \delta _c \mathbf{Q}_{[c]}\delta _c ^{-1} $ where  \begin{align}\label{eq:tildproj}
 \widetilde {\mathbf{P}}_{[c]} := \sum _{i=1,2}  \delta _c \xi _i [c]    \<  (\delta _c ^{-1} )^*\eta _i [c],  \cdot \>
 \text{ and }  \widetilde {\mathbf{Q}}_{[c]}=1-   \widetilde {\mathbf{P}}_{[c]} .
\end{align}
It is easy to see that {Proposition} \ref{prop:smooth2} can be equivalently reframed as follows. 
\begin{proposition}\label{prop:smooth21} 
 For  any  $ p>1$, $c>c^*(p)$ and $\sigma>1/2$  there is a   $C(  p,c,\sigma)>0$ such that
  \begin{align}\label{eq:smooth211}&
     \sup  _{\lambda \in \R \backslash \{ 0  \}} \left \|    R_{\widetilde{\mathcal{L}}_{[c]} - \widetilde {\mathbf{P}}_{[c]}}     ( \im \lambda   )     \right \| _{ \mathcal{L}\left (  L^{ 2, \sigma + 1}( \R) ,   \widehat{\Sigma} \right )   }\le C( p,c,\sigma).  
   \end{align}
\end{proposition}
In order to prove {Proposition} \ref{prop:smooth21} we consider  the equation 
\begin{align}
     \widetilde A_{[c,z]} u=0 \label{eq:lineq} ,
\end{align}
whose characteristic equation   is
\begin{align}\label{eq:mus}
  P_\gamma(\mu,z) :=  \mu^3-\gamma z\mu^2-\mu+z=0.
\end{align}
We summarize some properties of the roots of \eqref{eq:mus}.

\begin{lemma}\label{lem:roots1}
\begin{description}
  \item[a]   For 
  \begin{align}\label{eq:roots11}
      z= \im  \frac{k(1+k^2)}{1+\gamma k^2}   \text{  where } k\in \R,
  \end{align}
  \eqref{eq:mus} has the three roots 
\begin{align}
    \mu_2(z,\gamma)
    &=
    \im k,
    \label{eq:mu2_imag}\\
    \mu_1(z,\gamma)
    &=
    -\alpha_\gamma(k)
    -
    \im\frac{(1-\gamma)k}
    {2(1+\gamma k^2)},
    \label{eq:mu1_imag}\\
    \mu_3(z,\gamma)
    &=
    \alpha_\gamma(k)
    -
    \im\frac{(1-\gamma)k}
    {2(1+\gamma k^2)},
    \label{eq:mu3_imag}
\end{align}
where
\begin{align}\label{eq:alpha_gamma}
    \alpha_\gamma(k)
    :=
    \left[
        \frac{1+k^2}{1+\gamma k^2}
        -
        \frac{(1-\gamma)^2k^2}
        {4(1+\gamma k^2)^2}
    \right]^{1/2}
    .
\end{align}

\item[b] For any  $\gamma\in(0,1)$ there exists $\varepsilon_\gamma>0$ such that,
for 
\begin{align}
    \label{eq:Sgamma}  S_\gamma
    :=
    \left\{
        z\in\C:
        |\Re z|<\varepsilon_\gamma
    \right\},
\end{align}
there  are three   functions  $\mu _j \in C^\omega ( S_\gamma , \C )$  giving for any $z\in S_\gamma$ the three roots of   \eqref{eq:mus},    which extend 
the roots in Claim \textbf{a}.

\item[c] For  $ 0<\Re z<\varepsilon_\gamma,$  we have
\begin{align}\label{eq:root_order}
    \Re\mu_1(z,\gamma)<0<\Re\mu_2(z,\gamma)<\Re\mu_3(z,\gamma).
\end{align}
Moreover, after possibly decreasing $\varepsilon_\gamma$, there exists
$\kappa_\gamma>0$ such that   for all $z\in\overline{S_\gamma}$
\begin{align}
    \Re\mu_1(z,\gamma)
    &\leq-\kappa_\gamma,
    \label{eq:root_gap1}\\
    \Re\bigl(\mu_2(z,\gamma)-\mu_1(z,\gamma)\bigr)
    &\geq\kappa_\gamma,
    \label{eq:root_gap2}\\
    \Re\bigl(\mu_3(z,\gamma)-\mu_2(z,\gamma)\bigr)
    &\geq\kappa_\gamma .
    \label{eq:root_gap3}
\end{align}

\item[d] For $z$ near 0   
\begin{align}
    \mu_1(z,\gamma)
    &=
    -1-\frac{1-\gamma}{2}z+O(z^2),
    \label{eq:mu1_low}\\
    \mu_2(z,\gamma)
    &=
    z+O(z^3),
    \label{eq:mu2_low}\\
    \mu_3(z,\gamma)
    &=
    1-\frac{1-\gamma}{2}z+O(z^2).
    \label{eq:mu3_low}
\end{align}
\item[e]    For  $z\to\infty$ in $  \overline{S}_\gamma$  we have
\begin{align}
    \mu_1(z,\gamma)
    &=
    -\gamma^{-1/2}+O (|z|^{-1}),
    \label{eq:mu1_high}\\
    \mu_2(z,\gamma)
    &=
    \gamma z+O (|z|^{-1}),
    \label{eq:mu2_high}\\
    \mu_3(z,\gamma)
    &=
    \gamma^{-1/2}+O (|z|^{-1}).
    \label{eq:mu3_high}
\end{align}
\item[f] The following identities hold, 
\begin{align}
    \mu_1+\mu_2+\mu_3
    &=
    \gamma z,
    \label{eq:sum_of_mus}\\
    \mu_1\mu_2+\mu_1\mu_3+\mu_2\mu_3
    &=
    -1,
    \label{eq:sum_of_products}\\
    \mu_1\mu_2\mu_3
    &=
    -z.
    \label{eq:product_of_mus}
\end{align}
\item[g]   We have \begin{align}
    \label{eq:mu1mu3}   \mu _1(-z, \gamma )=- \mu _3(z, \gamma ) .
 \end{align}

\end{description}

\end{lemma}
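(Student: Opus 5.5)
Parts \textbf{f} and \textbf{g} are the natural starting point, since they are purely algebraic. Part \textbf{f} is Vieta's formulas for the monic cubic $P_\gamma(\mu,z)=\mu^3-\gamma z\mu^2-\mu+z$. Part \textbf{g} comes from the symmetry $P_\gamma(-\mu,-z)=-P_\gamma(\mu,z)$: the root set at $-z$ is the negative of the root set at $z$. The labeling is then fixed by the ordering of real parts in \textbf{c}, which reverses under $\mu\mapsto-\mu$. Hence $\mu_1(-z)=-\mu_3(z)$ and $\mu_2(-z)=-\mu_2(z)$, with analyticity propagating the identity from $\Re z>0$ to all of $S_\gamma$.

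For \textbf{a}, one checks directly that $P_\gamma(\im k,z)=0$ exactly when $z(1+\gamma k^2)=\im k(1+k^2)$, which is \eqref{eq:roots11}. Dividing by $\mu-\im k$ gives the quadratic
\[
\mu^2+(\im k-\gamma z)\mu+(-1-k^2+\im\gamma z k)=0 .
\]
With the explicit $z$ of \eqref{eq:roots11}, the linear coefficient is $\im k(1-\gamma)/(1+\gamma k^2)$, which yields the common imaginary part $-\im(1-\gamma)k/(2(1+\gamma k^2))$. The discriminant is a positive real number, so its square root is $\alpha_\gamma(k)$ as in \eqref{eq:alpha_gamma}. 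To see positivity I will check $4(1+k^2)(1+\gamma k^2)>(1-\gamma)^2k^2$, which holds because $4(1+\gamma k^2)(1+k^2)\ge 4\gamma k^4+4(1+\gamma)k^2+4$ and $(1-\gamma)^2<4(1+\gamma)$. The key consequence is that on the imaginary axis the three roots have real parts $-\alpha_\gamma<0=\Re\mu_2<\alpha_\gamma$, so they are distinct.

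For \textbf{b}, \textbf{c} and \textbf{e}, I first examine the asymptotics on the imaginary axis: $\alpha_\gamma(k)\to\gamma^{-1/2}$ as $|k|\to\infty$ and $\alpha_\gamma(0)=1$. Since $\alpha_\gamma$ is continuous and positive, $\inf_k\alpha_\gamma(k)>0$, and the three roots are uniformly separated on $\im\R$. The map $k\mapsto z(k)$ is a real-analytic increasing bijection of $\R$ onto $\im\R$, so the roots of \textbf{a} are analytic functions of $z\in\im\R$. The discriminant of $P_\gamma$ in $\mu$ is nonzero along the axis, so the implicit function theorem extends the roots analytically to a neighborhood of $\im\R$, and uniform separation plus continuity keep the real-part gaps \eqref{eq:root_gap1}--\eqref{eq:root_gap3} with some $\kappa_\gamma$.

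The main obstacle is uniformity of the strip width $\varepsilon_\gamma$ as $|z|\to\infty$. I will handle it with the large-$z$ asymptotics \textbf{e}. Dividing $P_\gamma$ by $z$ shows that one root is $\gamma z+O(|z|^{-1})$, and the other two satisfy $1-\gamma\mu^2+O(|z|^{-1})=0$, giving $\pm\gamma^{-1/2}+O(|z|^{-1})$. Rouché's theorem on small discs, applied uniformly for $|\Re z|\le1$ and $|z|$ large, justifies these expansions. Combining this with compactness on bounded parts of the strip gives a fixed $\varepsilon_\gamma$.

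The sign ordering \eqref{eq:root_order} for $0<\Re z<\varepsilon_\gamma$ follows from $\Re\mu_2>0$. On the axis $\mu_2=\im k$, and $\partial_z\mu_2=-\partial_zP/\partial_\mu P$. Evaluated at $\mu=\im k$, this gives $d\mu_2/dz=(1+\gamma k^2)/(3\mu_2^2-2\gamma z\mu_2-1)$ at $\mu_2=\im k$. I will check that this has positive real part, with the bounds $\approx1$ near $0$ and $\approx\gamma$ at infinity, which matches \textbf{d} and \textbf{e}. Positivity of this real part, uniformly in $k$, gives $\Re\mu_2>0$ after shrinking $\varepsilon_\gamma$; the $\mu_1,\mu_3$ signs follow from the gaps.

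Finally, \textbf{d} is a Taylor expansion at $z=0$, where the roots are $-1,0,1$. Implicit differentiation gives $\mu_2'(0)=1$, and $\mu_2$ is odd by \textbf{g}, so $\mu_2=z+O(z^3)$. At $\mu=\pm1$ one has $\partial_\mu P=2$ and $\partial_zP=1-\gamma$, so $\mu_{1,3}'(0)=-(1-\gamma)/2$.
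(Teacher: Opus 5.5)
Your proof is correct. Parts \textbf{a}, \textbf{d}, \textbf{f} and \textbf{g} follow the paper, and your Rouch\'e argument for \textbf{e} is the paper's implicit function argument in $\zeta=z^{-1}$ in another form. The genuine differences are in \textbf{b} and in the sign part of \textbf{c}.

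For \textbf{b}, the paper argues algebraically. Eliminating $z$ between $P_\gamma=0$ and $\partial_\mu P_\gamma=0$ gives $\gamma\mu^4-(3-\gamma)\mu^2+1=0$, so at most four values of $z$ carry a double root, and by \textbf{a} none lies on $\im\R$. A thin strip avoids them, and the roots continue analytically on it because it is simply connected. You instead obtain simplicity, analyticity and the gaps \eqref{eq:root_gap1}--\eqref{eq:root_gap3} together from quantitative input: the real parts $-\alpha_\gamma,0,\alpha_\gamma$ on the axis with $\inf_k\alpha_\gamma>0$, the uniform large-$|z|$ expansions, and compactness in between. You label the roots globally by the order of their real parts, so no monodromy argument is needed. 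This is longer than the paper's proof of \textbf{b}, but it actually proves the gap estimates, which the paper only sketches in one sentence.

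For the signs in \eqref{eq:root_order}, the paper's route is cheaper. Part \textbf{a} says exactly that $P_\gamma(\mu,z)=0$ with $\mu\in\im\R$ forces $z\in\im\R$. Hence each $\Re\mu_j$ keeps a constant sign on $\{0<\Re z<\varepsilon_\gamma\}$, and \textbf{d} at small real $z>0$ reads that sign off. Your derivative argument also works, but it needs a lower bound on $\Re\mu_2'$ that is uniform on the whole strip, not just on the axis. At large $|z|$ this comes from $\mu_2'=-\partial_zP_\gamma/\partial_\mu P_\gamma$ together with $\mu_2=\gamma z+O(|z|^{-1})$, which give $\mu_2'\to\gamma$. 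In return you get the quantitative bound $\Re\mu_2\gtrsim\Re z$.

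There are two sign slips. First, the constant term of your quadratic is $-1-k^2-\im\gamma zk$, which on \eqref{eq:roots11} equals $-(1+k^2)/(1+\gamma k^2)$. Second, the correct formula is $d\mu_2/dz=-(1+\gamma k^2)/(3\mu_2^2-2\gamma z\mu_2-1)$. On the axis the denominator equals $-\bigl(1+(3-\gamma)k^2+\gamma k^4\bigr)/(1+\gamma k^2)<0$, so $d\mu_2/dz=(1+\gamma k^2)^2/\bigl(1+(3-\gamma)k^2+\gamma k^4\bigr)$. This is real and positive, equals $1$ at $k=0$, and tends to $\gamma$ as $|k|\to\infty$, as you anticipated.
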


\begin{proof}
Substituting  $\mu =\mu_2:=
    \im k$  in \eqref{eq:mus} and solving \eqref{eq:mus}   in $z$,     we obtain   \eqref{eq:roots11}. Then
\begin{align*}
    \mu^3-\gamma z\mu^2-\mu+z= (\mu -\im k) \left (  \mu ^2 +\im k \frac{1-\gamma}{1+\gamma k^2}   \mu -  \frac{ 1+k^2}{1+\gamma k^2}    \right ) .
\end{align*}
The roots of the quadratic  polynomial yield the two additional  roots  in \eqref{eq:mu1_imag}--\eqref{eq:mu3_imag}. This proves Claim \textbf{a}.
    
To prove  Claim \textbf{b}, we know already from the part  \textbf{a} that the three roots, explicitly written in \eqref{eq:mu2_imag}--\eqref{eq:mu3_imag}, are distinct. We need to show that this remains true for $z\in S_\gamma$, if we pick 
$\varepsilon_\gamma $ small enough.  
Equation \eqref{eq:mus} is equivalently $z=\frac{\mu   -\mu ^3}{1-\gamma \mu ^2}$ while $\partial _\mu  P_\gamma(\mu,z )=0$ can be written as 
$z=\frac{3\mu ^2 -1}{2\gamma \mu  }$, so that equating we obtain $    \gamma \mu ^4  -( 3-\gamma )  \mu ^2 +1=0  $.
The latter equation has at most four solutions $\mu$, which entered in    $z=\frac{\mu   -\mu ^3}{1-\gamma \mu ^2}$ give at most four distinct values of $z$, all   outside $\im \R$  and, picking $\varepsilon_\gamma $ small enough,  also outside $  S_\gamma$.    Hence, since in the simply connected  open set $S_\gamma$ all the roots of $ P_\gamma(\mu ,z )$ are simple, we obtain  
Claim \textbf{b}  by Corollary of Theorem XII.2 in \cite{ReedSimon4}.
 
By \eqref{eq:mu2_imag}--\eqref{eq:mu3_imag}, $\mu _1(0)=-\mu _3(0) =-1$  and $\mu _2(0)=0$. Then \eqref{eq:mu1_low}--\eqref{eq:mu3_low} follow by elementary argument.  
Since by Claim \textbf{a} an imaginary  root $\mu $ can occur only for   imaginary
$z$,   for  $\Re z>0$, the number of roots in the
 left and right half-planes is constant. This and  \eqref{eq:mu1_low}--\eqref{eq:mu3_low} for small $z>0$ yield \eqref{eq:root_order}  for $ 0<\Re z<\varepsilon_\gamma$.

Claim \textbf{f} is elementary.    Claim \textbf{g}   follows from   $ P_\gamma(-\mu,-z)=-P_\gamma(\mu,z)$. 
Turning to Claim \textbf{e}, notice by  \eqref{eq:mu3_imag} and \eqref{eq:alpha_gamma} that formula \eqref{eq:mu3_high} is true for $z\in \im \R$. Introducing the new variable $\zeta =z ^{-1}$, by the implicit function theorem there is a solution of
\begin{align*}
    z   \left [ 1-\gamma   \mu ^2 +\zeta (\mu ^3 -\mu )         \right  ] =0 \text{ of the form } \mu = \gamma ^{-1/2} +O(\zeta )
\end{align*}
which by \eqref{eq:root_order} is $\mu _3$, yielding \eqref{eq:mu3_high}.  Then \eqref{eq:mu1_high} follows from  \eqref{eq:mu3_high} by \eqref{eq:mu1mu3} and   \eqref{eq:mu2_high} from \eqref{eq:sum_of_mus}. Finally, \eqref{eq:root_gap1}--\eqref{eq:root_gap3} are obtained  from behaviors near infinity, near zero, and by \eqref{eq:root_order}.

\end{proof}

By the variation of parameter formulas,   the solutions of the nonhomogeneous equation
\begin{align}\label{eq:nonhom}
   \widetilde A_{[c,z ]} ^{(0)} y:=-y''' +\gamma zy''+y' -z   y=F
\end{align}
are \begin{align}\label{eq:solution_nonhomogeneous_equation}
   & y =
    \sum_{ {j}=1,2,3 } a_ {j}  e^{\mu_ {j}  x}\int \ e^{-\mu_ {j}  x } F(x)   d x  ,
     \ \text{ where} \\& \nonumber    a_1 = \frac{ -1}{(\mu _1-\mu _2)(\mu _1-\mu _3) }  \text{ , }
         a_2   =  \frac{-1 }{(\mu _2-\mu _1)(\mu _2-\mu _3)} \text{ and  }
          a_3 = \frac{-1 }{(\mu _1-\mu _3)(\mu _2-\mu _3)} \ .
\end{align}
For $F\in L^2(\R )$  and $0< \Re z< \varepsilon _\gamma $  we have $L^2(\R )\ni y=R_{0} (z) F= R_{0} (z , \cdot )*F $   with the convolution with
\begin{align}\label{eq:freeres}
  R_{0} (z , x ): =  a_1(z, \gamma )   e^{\mu _1(z, \gamma )x } 1_{\R _+}(x ) - 1_{\R _-}(x ) \sum _{j=2,3}a_j(z, \gamma  )   e^{\mu _j(z, \gamma )x } 
\end{align}
where by $\sum _{1\le j \le 3}a_j=0= \sum _{1\le j \le 3}a_j\mu _j$  we have for the distributional derivatives \small
\begin{align}\label{eq:freeresder}
 \partial _x ^\alpha  R_{0} (z ,x) =  a_1(z, \gamma ) \mu _1^\alpha(z, \gamma )  e^{\mu _1(z, \gamma )x } 1_{\R _+}(x ) - 1_{\R _-}(x ) \sum _{j=2,3}a_j(z, \gamma  ) \mu _j^\alpha(z, \gamma )  e^{\mu _j(z, \gamma )x }  \text{ for }\alpha =1,2.
\end{align}
\normalsize
Then $ R_{0} (z )  \partial _x ^\alpha     =  \partial _x ^\alpha   R_{0} (z )  $  for $\alpha =1,2$. 
 For $s>1/2$ we introduce the space 
\begin{align*}
    Y_s:=  \{ v\in    L^{2,-s}(\R )   : v|_{\R _+} \in L^1(\R _+)\}  \text{ with }\| v\| _{ Y_s}:= \| v\| _{  L^{2,-s}(\R )} + \| v\| _{  L^1(\R _+)}.
\end{align*}
Obviously $ Y_s\subset \widehat{\Sigma}$.  Let 
$S_\gamma ^+$ be  the strip in $\C$ defined by $0\le \Re z < \varepsilon _\gamma$.

\begin{lemma}
\label{lem:R0-L1-L2ms} 
For any  $s,\sigma >1/2$  there is a  constant $C (s,\sigma , \gamma )>0$  such that for  $z  \in S_\gamma ^+$ 
\begin{align}
& \| \partial ^\alpha _xR_0 (z) \|_{\mathcal{L}( L^{2,\sigma +1}(\R ), Y_s)}\le C (s,\sigma , \gamma )
\langle z\rangle ^{-1}  \text{ for $\alpha =0,1$ and } \label{eq:R0-L1-tildesigma}
\\  &\| \partial ^2 _x  R_0 (z) \|_{\mathcal{L}( L^{2,\sigma +1}(\R ), Y_s)}\le C (s,\sigma , \gamma )  .\label{eq:R0-L1-W1inf-asymp}
\end{align}
Furthermore, $(z\to \partial _x^\alpha R_0 (z))\in C^0(  S_\gamma ^+, \mathcal{L}( L^{2,\sigma +1}(\R ), Y_s)) $  for $\alpha \le 2 $.

\end{lemma}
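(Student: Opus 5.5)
The plan is to work directly with the explicit kernel \eqref{eq:freeres}--\eqref{eq:freeresder}. For $\alpha\le 2$, $\partial_x^\alpha R_0(z)F$ is a sum of three pieces. The first is
\[
a_1\mu_1^\alpha\int_{-\infty}^x e^{\mu_1(x-y)}F(y)\,dy ,
\]
and the other two are
\[
-a_j\mu_j^\alpha\int_x^{+\infty}e^{\mu_j(x-y)}F(y)\,dy ,\qquad j=2,3 .
\]
I estimate each piece in $L^{2,-s}(\R)$ and in $L^1(\R_+)$ separately, tracking the size of the coefficients $a_j\mu_j^\alpha$ in $z$. By Lemma \ref{lem:roots1} \textbf{c}, the differences $\mu_i-\mu_j$ are bounded below uniformly on $\overline{S_\gamma}$, so the $a_j$ are continuous and bounded there. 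By \textbf{e}, as $|z|\to\infty$ we have $\mu_1,\mu_3\to\mp\gamma^{-1/2}$ and $\mu_2\sim\gamma z$. Hence $a_1,a_3=O(|z|^{-1})$ and $a_2=O(|z|^{-2})$, since $a_2$ contains the product $(\mu_2-\mu_1)(\mu_2-\mu_3)$. Consequently $|a_j\mu_j^\alpha|\lesssim\langle z\rangle^{-1}$ for $\alpha=0,1$ and all $j$, while for $\alpha=2$ the worst coefficient is $a_2\mu_2^2=O(1)$. This bookkeeping is what produces the different rates in \eqref{eq:R0-L1-tildesigma} and \eqref{eq:R0-L1-W1inf-asymp}.

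The $j=1,3$ pieces are routine. By \eqref{eq:root_gap1} and \eqref{eq:root_gap3} we have $\Re\mu_1\le-\kappa_\gamma$ and $\Re\mu_3\ge\kappa_\gamma$, because $\Re\mu_2\ge0$ on $S_\gamma^+$. So these pieces are convolutions with $L^1$ kernels whose exponential decay is uniform in $z$. Young's inequality then bounds them in $L^2\subset L^{2,-s}$ and in $L^1$, using $F\in L^{2,\sigma+1}\subset L^1$ because $\sigma+1>1/2$.

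The delicate piece is $j=2$, where $\Re\mu_2$ may vanish (for $z\in\im\R$). There I only use $|e^{\mu_2(x-y)}|\le1$ for $y\ge x$, which gives the pointwise bound
\[
\Big|\int_x^\infty e^{\mu_2(x-y)}F\,dy\Big|\le\int_x^\infty|F|\,dy .
\]
For $x\le0$ this is bounded by $\|F\|_{L^1}$, which lies in $L^{2,-s}(\R_-)$ because $s>1/2$. For $x\ge0$ it is bounded by
\[
\int_x^\infty|F|\,dy\lesssim\langle x\rangle^{-\sigma-1/2}\|F\|_{L^{2,\sigma+1}}
\]
by Cauchy--Schwarz. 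That function lies in $L^1(\R_+)$ exactly because $\sigma>1/2$, and hence also in $L^{2,-s}(\R_+)$. This is the step I expect to carry the whole lemma: it explains both the choice of the space $Y_s$ and the hypothesis $\sigma>1/2$. Multiplying by the coefficient bounds from the first paragraph gives \eqref{eq:R0-L1-tildesigma} and \eqref{eq:R0-L1-W1inf-asymp}. For bounded $z$ I need only boundedness of the $a_j$ and $\mu_j$ on compact subsets of $\overline{S_\gamma}$, which follows from Lemma \ref{lem:roots1} \textbf{b}.

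For continuity in $z$, I fix $F$ and use dominated convergence. The integrands depend continuously on $z$ by the analyticity in Lemma \ref{lem:roots1} \textbf{b}, and they are dominated uniformly for $z$ in a compact subset of $S_\gamma^+$ by the majorants above, since $|e^{\mu_j t}|\le 1$ on the relevant half-lines. This gives strong continuity. To upgrade to norm continuity, I write the difference of kernels as $e^{\mu_j(z)t}-e^{\mu_j(z')t}$ and cut the tail $|x|,|y|>R$, which is small uniformly in $z$ by the weighted bounds. On the truncated region the kernels converge uniformly, giving convergence in operator norm. Finally I verify that the distributional identity \eqref{eq:freeresder} introduces no delta terms; this holds because $\sum_j a_j=\sum_j a_j\mu_j=0$, which the paper already records.
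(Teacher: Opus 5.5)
Your proposal is correct and follows essentially the paper's proof. It uses the same split of the kernel into the exponentially decaying pieces ($j=1,3$) and the merely bounded $1_{\R_-}$ piece. The $L^{2,-s}$ bound comes from $\|F\|_{L^1}$, and the $L^1(\R_+)$ bound comes from the tail $\int_x^{\infty}|F|\,dy$ together with $\sigma>1/2$; you use pointwise Cauchy--Schwarz where the paper uses Fubini and $\int_0^\infty y|F(y)|\,dy$. Your explicit bookkeeping $a_1,a_3=O(|z|^{-1})$, $a_2=O(|z|^{-2})$ simply spells out the paper's appeal to Lemma \ref{lem:roots1}.

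The only real difference is continuity in $z$. The paper obtains local H\"older continuity in operator norm directly from $|e^{\mu_j(z)t}-e^{\mu_j(\zeta)t}|\lesssim|\mu_j(z)-\mu_j(\zeta)|^{\beta}|t|^{\beta}$ and the room in $\sigma$. Your truncation argument also works, though the cutoff should be placed with some care. For $j=1,3$ the kernel differences already tend to $0$ in $L^1(dt)$, so no cutoff is needed there, and it is only required for the $j=2$ piece.
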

\begin{proof}  For a   $C(z)>0$, by \eqref{eq:freeres}   for the kernel  of $\partial ^\alpha _xR_0 (z )$ with $\alpha \le 2$  we have 
\begin{align}\label{eq:estres0}
    |\partial ^\alpha _xR_0 (z,x )|\le C(z) \left ( 1_{\R _+} (x) e^{-\kappa _\gamma |x|}  + 1_{\R _-} (x)   \right ).
\end{align}
It is clear that the convolution by $1_{\R _+} (x) e^{-\kappa _\gamma |x|}$ is in $\mathcal{L}( L^{2,\sigma +1}(\R ), Y_s)$.  We have
\begin{align*}
     \| 1_{\R _-}*  v \| _{L^{2,-s}(\R )}\lesssim \| 1_{\R _-} * v \| _{L^{\infty }(\R )} \le  \|   v \| _{L^{1 }(\R )} \lesssim  \|   v \| _{L^{2,\sigma }(\R )}.
\end{align*}
We have the following, which shows the need of $v\in L^{2,\sigma +1}(\R )$, 
\begin{align*}
     \| 1_{\R _-}*  v \| _{L^{1}(\R _+)}\le\int _0^{+\infty}dx \int _x^{+\infty} |v(y)| dy =  \int _0^{+\infty} y |v(y)| dy\lesssim  \|   v \| _{L^{2,\sigma +1}(\R )}.
\end{align*}
Then the bounds in \eqref{eq:R0-L1-tildesigma}--\eqref{eq:R0-L1-W1inf-asymp} follow from the bounds in   Lemma \ref{lem:roots1}.  In the above estimates 
there is some room in the $\sigma $ which allows to exploit  bounds of the following  form, for small $\beta >0$, 
\begin{align*}
    | e^{\mu _j(z, \gamma )(x-y) } - e^{\mu _j(\zeta , \gamma )(x-y) }|\lesssim |\mu _j(z, \gamma ) - \mu _j(\zeta, \gamma )   |^\beta   |  x-y|^\beta .
\end{align*}
 The inequality provides in fact, repeating previous arguments,  local $C^{0, \beta}$ continuity of the maps  $z\to \partial _x^\alpha R_0 (z)  $.

\end{proof}

\begin{proof}[Proof of Proposition \ref{prop:smooth21}]
    
By \eqref{eq:tildLin}--\eqref{eq:ALinsplit} we   write 
\begin{align}\label {eq:smooth23large}
       \widetilde{\mathcal{L}}_{[c]} - \widetilde {\mathbf{P}}_{[c]} -z= (1-\gamma \partial ^2_x) ^{-1}  \widetilde{A}   _{[c,z ]} ^{(0)} ( 1+ K(z) ) \text{ , } K(z):=R_0(z) T- R_0(z)   (1-\gamma \partial ^2_x) \widetilde {\mathbf{P}}_{[c]}.
\end{align}
 By Lemma \ref{lem:R0-L1-L2ms} we have $(z\to K(z)) \in C^0(  S_\gamma ^+, \mathcal{L}(   Y_s))$.
 We claim that $K(z)$ is compact on $Y_s$ for every
$z\in S_\gamma^+$.  Since $\text{rank} \ \widetilde {\mathbf{P}}_{[c]}=2$, it  is enough to focus on $R_0(z) T$.  
  Since $f'(Q)$ and all
its derivatives decay exponentially,  as a multiplication operator 
$f'(Q) \in \mathcal{L}( Y_s,  L^{2,\sigma+1}(\R)) $ for every
$\sigma>1/2$. Hence, by Lemma \ref{lem:R0-L1-L2ms},
 $ \partial_x^\alpha R_0(z) \circ f'(Q) \in \mathcal{L}( Y_s )$  for $\alpha =1,2$.  By the Rellich theorem,
for   any $C>0$ we conclude that $ \chi _C   R_0(z) T:Y _s\to Y _s$ is compact, with $\chi _C$ like in \eqref{eq:chi}.
Using the bound \eqref{eq:estres0} it is easy to see that $\| (1-\chi _C)   R_0(z)T\| _{\mathcal{L}(Y_s)} \xrightarrow{C\to +\infty}0 $. Indeed   
\begin{align*}   \| \< x \> ^{-s} (1-\chi _C)   R_0(z)Tu \| _{L^2}& \lesssim 
    \| \< x \> ^{-s} (1-\chi _C)  ( 1_{\R _+}   e^{-\kappa _\gamma | \cdot|}  + 1_{\R _-}  ) * |f'(Q)u| \| _{L^2}   \\& \le   \| \< x \> ^{-s} (1-\chi _C)     \| _{L^2} \|  f'(Q)u    \| _{L^1}
  \lesssim  \| \< x \> ^{-s} (1-\chi _C)     \| _{L^2} \|   u    \| _{ L^{2,-s}}
\end{align*}
where by $s>1/2$ we have $ \| \< x \> ^{-s} (1-\chi _C)    \| _{L^2}\xrightarrow{C\to +\infty}0$,
so that $\| (1-\chi _C)   R_0(z)T\| _{\mathcal{L}(L^{2,-s})} \xrightarrow{C\to +\infty}0$.
Set $\rho=\frac12\min\{\kappa_\gamma,(p-1)/2\}$. By  \eqref{eq:estres0}, 
\begin{align*}&
     \|(1-\chi _C) 1_{\R _-} * f'(Q)u  \| _{L^{1}(\R _+)}  \le \int _C^{+\infty}dx \int _x^{+\infty} |  f'(Q (y))    u (y)| dy \\& \le  \int _C^{+\infty} y      |f'(Q (y))  u(y)| dy\lesssim  C e^{-C\rho}     \|   u \| _{L^{ 1}(\R _+ )}.
\end{align*}
We also  use $e^{-\kappa _\gamma |x -y|} f'(Q (y))\lesssim e^{-\rho (|x| +|  y|)} $
\begin{align*}&
     \|(1-\chi _C) 1_{\R _+}   e^{-\kappa _\gamma | \cdot |}  * f'(Q)u  \| _{L^{1}(\R _+)}  \le \int _C^{+\infty}dx \int _ {-\infty} ^x  e^{-\kappa _\gamma |x -y|}   |  f'(Q (y))    u (y)| dy \\&  \lesssim 
     \int _C^{+\infty}dx e^{-\rho  x } \int _ {\R }   e^{-\rho  |  y|}   |      u (y)| dy
      \lesssim   e^{-C\rho }     \|   u \| _{Y_s }.
\end{align*}
Hence we conclude  also $\| (1-\chi _C)   R_0(z)T\| _{\mathcal{L}(Y_s, L^{1}(\R _+))} \xrightarrow{C\to +\infty}0 $, and we   conclude that 
$R_0(z)T:Y_s\to Y_s$ is compact.
  Suppose now $( 1+ K(z) )\widetilde g =0$ for a nonzero $\widetilde g\in  Y_s$. Then,    
 since for $\widehat a>0$ small  \small
\begin{align*}
    e^{\widehat a x}|\partial _xR_0(z,x-y) f'(Q(y) ) | \langle y \rangle ^s \lesssim   e^{\widehat a x} \left ( 1_{\R _+} (x-y) e^{-\kappa _\gamma |x-y|}  + 1_{\R _-} (x-y)   \right ) e^{-2\rho|y|} \langle y \rangle ^s \in L^2(\R \times \R )
\end{align*}
\normalsize
it follows that $  R_0(z)T \in \mathcal{L} (L^{2,-s} (\R ) , L^2_{\widehat a} (\R )) $. It is easy to conclude that  $K(z) \in \mathcal{L} (Y_s , L^2_{\widehat a} (\R ))$.
Then   $\widetilde  g\in L^2_{\widehat a}(\R)$.  We have 
\begin{align}\label{eq:notildeg}
0=    R_0(z) (   \widetilde{A}   _{[c,z ]} -  (1-\gamma \partial ^2_x) \widetilde {\mathbf{P}}_{[c]} ) \widetilde g= (1-\gamma \partial ^2_x) R_0(z) ( \widetilde{\mathcal{L}} _{[c]} -z-     \widetilde {\mathbf{P}}_{[c]} )\widetilde g .
\end{align}
Since $\ker ( 1-\gamma \partial ^2_x) =0$  in   $  L^2_{\widehat a}(\R)$ by $\widehat a<1$,  and  $\ker R_0(z) =0$, 
\eqref {eq:notildeg} implies 
\begin{align*}
( \widetilde{\mathcal{L}} _{[c]} -     \widetilde {\mathbf{P}}_{[c]} )\widetilde g= z \widetilde g.
\end{align*}
Then $\widetilde{\mathcal{L}} _{[c]} \widetilde {\mathbf{P}}_{[c]} \widetilde g =(1+z)\widetilde {\mathbf{P}}_{[c]}  \widetilde g  $
 which   by $\Re z \ge 0$  and the fact that $\sigma (\widetilde{\mathcal{L}} _{[c]} \widetilde {\mathbf{P}}_{[c]} )  = \sigma ( {\mathcal{L}} _{[c]}  {\mathbf{P}}_{[c]} ) =\{ 0\}$     implies $\widetilde {\mathbf{P}}_{[c]} \widetilde g=0$. 
In turn  
$  \mathcal{L}   _{[c]}  g=\lambda    g$ in the notation in \eqref{eq:deltac} and for $\lambda = \gamma  ^{3/2}  c \ z $. But this contradicts
{Proposition} \ref{prop:noembeig} if $\lambda \neq 0$. Nor can we have $  \mathcal{L}   _{[c]}  g=0$    because this would imply 
$g= {\mathbf{P}}_{[c]}   g$, which would yield    $0\neq \widetilde g= \widetilde {\mathbf{P}}_{[c]} \widetilde g =0$.
Hence there is no such $\widetilde  g$ and we can  invert in \eqref{eq:smooth23large} obtaining 
\begin{align*}
   \left \|    (\widetilde{\mathcal{L}}_{[c]} - \widetilde {\mathbf{P}}_{[c]} -z)^{-1}    \right \| _{ \mathcal{L}  ( L ^{ 2,\sigma +1} \left ( \R\right ),   Y_s   )   } &\le    \left \|     (1   + K(z) ) ^{-1}  \right \| _{ \mathcal{L}  (    Y_s   )   }
   \left \|    R_0(z) (1-\gamma \partial ^2_x)   \right \| _{ \mathcal{L}  ( L ^{2,\sigma +1} \left ( \R\right ),   Y_s  )   } \\& \lesssim \left \|     (1   +  K(z)) ^{-1}  \right \| _{ \mathcal{L}  (   Y_s   )   },
\end{align*}
where we used \eqref{eq:R0-L1-W1inf-asymp}. By Lemma  \ref{lem:R0-L1-L2ms} the function  $z\to (1   + K(z) ) ^{-1}   $ is 
in $C^0(S ^+_\gamma , \mathcal{L}  (   Y_s   )) $ with
$(1   +  K(z) ) ^{-1}  \xrightarrow{z\to  \infty} 1 $  by \eqref{eq:R0-L1-tildesigma}. Then  \eqref{eq:smooth211} ensues.

  \end{proof}

\section{Absence of non-zero embedded eigenvalues}\label{sec:noemb}

 Following  the approach in Perelman \cite {Perelman01AHP} and  Krieger and Schlag \cite{KS06JAMS},   we show the following, which completes the proof of Proposition \ref{prop:noembeig}.
\begin{proposition}\label{prop:noemb_all}
    For any $p>1$ and $c>1$, $\sigma_{\mathrm{p}}(\mathcal L_{[c]})\cap (\im \R\setminus \{0\})=\emptyset$.
\end{proposition}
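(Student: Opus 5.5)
The plan is to reduce Proposition~\ref{prop:noemb_all} to a statement about the rescaled operator and then run a Perelman / Krieger--Schlag conjugation argument, using that $L_+$ in \eqref{eq:tildLin} is a P\"oschl--Teller operator. By the scaling \eqref{eq:deltac}, $\mathcal L_{[c]}g=\lambda g$ with $\lambda\in\im\R\setminus\{0\}$, $g\in L^2(\R)$, is equivalent to $\widetilde A_{[c,z]}u=0$ for $u=\delta_c g\in L^2(\R)$ and $z=\lambda/(\gamma^{3/2}c)\in\im\R\setminus\{0\}$. Here $\gamma\in(0,1)$ is arbitrary, since only $c>1$ is assumed. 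Nothing below uses orbital stability, so the argument is uniform in $c>1$, as the statement requires.

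\emph{Step 1 (exponential decay).} Write $z$ as in \eqref{eq:roots11} with $k\neq0$. By Lemma~\ref{lem:roots1}\textbf{a}, the three characteristic exponents at $\pm\infty$ are $\im k$ and $\mp\alpha_\gamma(k)-\im\frac{(1-\gamma)k}{2(1+\gamma k^2)}$, with $\alpha_\gamma(k)>0$. Treating $T=-\partial_x\circ f'(Q)$ as an exponentially decaying perturbation, a Levinson-type asymptotic analysis of the third order ODE \eqref{eq:lineq} shows the following: an $L^2$ solution carries no oscillating mode $e^{\im kx}$ at either end. Hence it behaves like $e^{\mu_1x}$ at $+\infty$ and like $e^{\mu_3x}$ at $-\infty$, so $u$ and all its derivatives decay like $e^{-\alpha_\gamma|x|}$.

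Integrating the equation over $\R$ gives $z\int u=0$, so $w:=\int_{-\infty}^xu$ also decays exponentially. The eigenvalue problem then becomes the second order system
\begin{align*}
L_+u=z\,(w-\gamma u'),\qquad w'=u .
\end{align*}

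\emph{Step 2 (Darboux conjugation).} In the variable $y=\frac{p-1}2x$ we have
\begin{align*}
L_+=\tfrac{(p-1)^2}{4}\bigl(-\partial_y^2+\tfrac{4}{(p-1)^2}-\nu(\nu+1)\sech^2y\bigr),\qquad \nu=\tfrac{p+1}{p-1} .
\end{align*}
The ground state of $L_+$ is $Q^{(p+1)/2}$ and its kernel is spanned by $Q'$. Two Darboux steps, built from these two functions, give a second order operator $S$ with $SL_+=\widetilde L S$. Here $\widetilde L$ has the P\"oschl--Teller potential with index $\nu-2$. For every $p>1$ this potential has no eigenvalues and no threshold resonance: it is repulsive for $p\le3$, and for $p>3$ it has index in $(-1,0)$. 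Moreover $S^*S$ is a polynomial in $L_+$ that vanishes only on $\Span\{Q^{(p+1)/2},Q'\}$.

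Applying $S$ to the system transforms the unknown into $U=Su$, which satisfies an equation of the form
\begin{align*}
\partial_x\widetilde LU=z(1-\gamma\partial_x^2)U+\mathcal C U,
\end{align*}
where $\mathcal C$ collects the commutators of $S$ with $\partial_x$ and $1-\gamma\partial_x^2$. These commutators have coefficients that are derivatives of $\sech^2$, so they are exponentially localized.

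\emph{Step 3 (rigidity).} Pair the transformed equation with $\overline U$ and with $x\overline U'$, taking real and imaginary parts and using that $z$ is purely imaginary; all boundary terms vanish by Step~1. This should yield a virial/positive-commutator identity of the form
\begin{align*}
\langle(\text{positive operator})U,U\rangle=0,
\end{align*}
which forces $U=0$. Then $u\in\ker S=\Span\{Q^{(p+1)/2},Q'\}$, and substituting back into $\widetilde A_{[c,z]}u=0$ with $z\ne0$ gives $u=0$.

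The main obstacle I expect is Step~3. The commutator terms $\mathcal C$ are not small, and the argument must keep a sign for all $p>1$, including $p>3$ where the conjugated potential is weakly attractive. If the direct virial sign fails, I would fall back on Perelman's ODE route. The homogeneous equation after the Darboux steps has explicitly hypergeometric solutions, so one can compute the connection coefficient between the solution decaying like $e^{\mu_1 x}$ at $+\infty$ and the basis at $-\infty$. It would then remain to check that the coefficient of $e^{\mu_1x}$ at $-\infty$ never vanishes for $k\ne0$, which amounts to a Gamma-function identity to be verified.
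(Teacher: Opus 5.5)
Your proposal has a genuine gap. Step 3 is where the absence of eigenvalues would actually be proved, and it is only asserted (``this should yield a virial/positive-commutator identity''): there is no multiplier, no computation and no sign. The preparation in Step 2 does not support it either.

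First, your spectral claim on $\widetilde L$ is false, and you have the regimes reversed. The conjugated potential is $-(\nu-2)(\nu-1)\sech^2 y$ with $\nu-2=\frac{3-p}{p-1}$. For $1<p<3$ this potential is attractive, and $-\partial_y^2-(\nu-2)(\nu-1)\sech^2y$ has the eigenvalue $-(\nu-2)^2$. In other words, $\widetilde L$ keeps the third eigenvalue $\frac{(p-1)(5-p)}{4}$ of $L_+$, for instance $3/4$ for $p=2$, where $\nu-2=1$ also produces a threshold resonance. For $p=3$ you get $-\partial_y^2$, which has a threshold resonance. Only for $p>3$, with index in $(-1,0)$, is the potential repulsive. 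So the positivity you hope for fails exactly in the range containing the classical case $p=2$.

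Second, the transformed equation is not of the local form $\partial_x\widetilde LU=z(1-\gamma\partial_x^2)U+\mathcal CU$ with localized $\mathcal C$. The error terms $[S,\partial_x]L_+u$ and $z[S,1-\gamma\partial_x^2]u$ are expressed through $u$, not through $U=Su$. Rewriting them in $U$ requires inverting the second order operator $S$, so $\mathcal C$ is nonlocal, and the pairing with $\overline U$ and $x\overline U'$ that you describe does not close as stated.

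Your fallback is also not a proof. You give no reason why the relevant third order equation should have hypergeometric solutions with explicit connection formulas; a third order Fuchsian equation with three singular points generally carries an accessory parameter. The ``Gamma-function identity to be verified'' is left open.

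The idea you are missing is that neither connection coefficients nor a virial sign is needed: local exponents plus Liouville suffice. The paper uses Martel's transformation $v=(1-\partial_x^2)^{-1}L_cu$, which gives the closed equation $L_c\partial_xv=\im\tau(1-\partial_x^2)v$ together with $\partial_xv=\im\tau u$. It then substitutes $z=\tanh(\kappa x)$ with $\kappa=\frac{p-1}2\sqrt{(c-1)/c}$. Under this substitution $pQ_c^{p-1}$ becomes a polynomial in $z$, and the equation becomes Fuchsian with singular points only at $\pm1$ and $\infty$.

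Since $v\in L^2$, near $z=\pm1$ the function $w$ must be the pure Frobenius branch with exponent $\rho_{\pm,1}$, where $\Re\rho_{\pm,1}=\frac{\alpha}{2\kappa}>0$. Hence $f(z)=(z-1)^{-\rho_{+,1}}(z+1)^{-\rho_{-,1}}w(z)$ is entire. At $\infty$, $w$ grows at most like $|z|^{-\Re\rho_{\infty,3}}=|z|^{2/(p-1)}$ up to logarithms. The elementary inequality $\alpha^2>\frac{c-1}{c}$ for $\tau\neq0$ gives $\Re(\rho_{+,1}+\rho_{-,1}+\rho_{\infty,3})>0$, so $f\to0$ at infinity and $f\equiv0$ by Liouville. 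This works uniformly for all $p>1$ and $c>1$, and it bypasses exactly the step you could not close.
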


For the proof of Proposition \ref{prop:noemb_all}, it suffices to show that for any $p>1$, $c>1$ and $\tau\in \R\setminus\{0\}$, if $u\in D(\mathcal L_{[c]})= H^1(\R )$ satisfies 
\begin{align}\label{eq:eigen_eq}
    \mathcal L_{[c]}u=\im \tau u,
\end{align}
then $u=0$.
In the following, we fix $p>1$, $c>1$, $\tau\in \R\setminus\{0\}$ and $u\in H^1(\R )$ satisfying \eqref{eq:eigen_eq}.

\begin{lemma}
    For arbitrary $m\ge 1$, we have $u\in H^m(\R )$.
\end{lemma}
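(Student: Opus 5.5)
Rewrite the eigenvalue equation \eqref{eq:eigen_eq} as a third-order ODE and bootstrap regularity one derivative at a time. Since $\mathcal L_{[c]}=(1-\partial_x^2)^{-1}\partial_x L_c$, applying $(1-\partial_x^2)$ to \eqref{eq:eigen_eq} gives, in the sense of distributions,
\begin{align*}
\partial_x L_c u=\im\tau (1-\partial_x^2)u ,
\end{align*}
that is
\begin{align*}
-c\,u'''+\im\tau\, u''+(c-1)u'-\im\tau u=\partial_x\bigl(f'(Q_c)u\bigr).
\end{align*}
It is cleaner, though, to stay with the resolvent form
\begin{align*}
\im\tau u=(1-\partial_x^2)^{-1}\partial_x\bigl(-c u''+(c-1)u\bigr)-(1-\partial_x^2)^{-1}\partial_x\bigl(f'(Q_c)u\bigr).
\end{align*}
Using $-\partial_x^2(1-\partial_x^2)^{-1}=1-(1-\partial_x^2)^{-1}$, the first term equals $c\,u'-(1-\partial_x^2)^{-1}\partial_x u$. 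Hence
\begin{align*}
c\,u'=\im\tau u+R_{-\partial_x^2}(-1)\partial_x u+R_{-\partial_x^2}(-1)\partial_x\bigl(f'(Q_c)u\bigr),
\end{align*}
an identity holding in $L^2$ because $u\in H^1$.

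The induction step is this: if $u\in H^m$, then every term on the right lies in $H^{m}$, and in fact $R_{-\partial_x^2}(-1)\partial_x$ maps $H^{k}$ to $H^{k+1}$. Since $c>0$, this gives $u'\in H^m$, so $u\in H^{m+1}$. The only thing to check is the multiplier term, namely that $f'(Q_c)u\in H^{m}$ whenever $u\in H^m$. Even this is more than needed: it suffices that $f'(Q_c)u\in H^{m-1}$, because $R_{-\partial_x^2}(-1)\partial_x$ gains one derivative. Then $u'\in H^m$ follows from $u\in H^m$ through the first two terms together with the smoothing of the third.

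The main obstacle is the regularity of $f'(Q_c)=p\,Q_c^{p-1}$ for non-integer $p$ close to $1$, especially $1<p<2$. The key observation is that $Q_c$ is a positive function, so $f'(Q_c)=p\,Q_c^{p-1}$ for either choice of $f$ in \eqref{eq:BBM1}. By \eqref{eq:QQ1},
\begin{align*}
Q_c^{p-1}=(c-1)\frac{p+1}{2}\,\sech^{2}\!\left(\frac{p-1}{2}\sqrt{\frac{c-1}{c}}\,x\right).
\end{align*}
This is a real-analytic function whose derivatives of every order are bounded and exponentially decaying. It is therefore a bounded multiplier on every $H^k$, and $f'(Q_c)u\in H^m$ whenever $u\in H^m$.

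Starting from $u\in H^1$, induction on $m$ then gives $u\in H^m$ for all $m\ge1$.
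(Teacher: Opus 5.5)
Your proof is correct and is essentially the paper's argument: the paper also rewrites \eqref{eq:eigen_eq} as $c\partial_x u=\partial_x(1-\partial_x^2)^{-1}(u+pQ_c^{p-1}u)+\im\tau u$ and inducts on $m$. Your remark that $pQ_c^{p-1}$ is a constant multiple of a $\sech^2$ profile, and hence a smooth multiplier on every $H^k$ even for $1<p<2$, supplies the one detail the paper leaves implicit.
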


\begin{proof}
    We can rewrite \eqref{eq:eigen_eq} as follows and proceed by induction
    \begin{align}
        c\partial_x u =\partial_x(-\partial_x^2+1)^{-1}(u+pQ_c^{p-1}u)+\im \tau u.
    \end{align} 
\end{proof}

We set 
\begin{align}\label{eq:Martel_trans}
    v:=(-\partial_x^2+1)^{-1}L_c u.
\end{align}
Then, $v$ satisfies
\begin{align}
    L_c\partial_x v&= \im \tau (-\partial_x^2+1)v,\label{eq:v}\\
    \partial_x v&=\im \tau u.\label{eq:recov_u_from_v}
\end{align}
 If we can show $v=0$, \eqref{eq:recov_u_from_v} gives $u=0$.
Thus, it suffices to prove $v=0$. The key will be the  transformation of \eqref{eq:v} into a third order Fuchsian equation with three regular singular points.

\begin{lemma}
    Set $z=\mathrm{tanh}(\kappa x)$ with $\kappa=\frac{p-1}{2}\sqrt{\frac{c-1}{c}}>0$.
    Then, setting $w(z):=v(x)$, $w$ solves
    \begin{align}\label{eq:w}
        c\kappa^3((z^2-1)\partial_z)^3 w + \im \tau \kappa^2 ((z^2-1)\partial_z)^2 w -\kappa (z^2-1)(c-1+\gamma(z^2-1))\partial_z w -\im \tau w=0,
    \end{align}
    where  (only in this section)  $\gamma=(c-1)\frac{p(p+1)}{2}>0$.
\end{lemma}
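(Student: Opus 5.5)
The plan is a direct change of variables: I will express $\partial_x$, the potential $f'(Q_c)$ and the operator $1-\partial_x^2$ in the variable $z=\tanh(\kappa x)$, and substitute them into \eqref{eq:v}. The map $x\mapsto z$ is a real-analytic diffeomorphism of $\R$ onto $(-1,1)$. By the regularity lemma above, $u\in H^m(\R)$ for all $m$, so $v=(-\partial_x^2+1)^{-1}L_cu$ is smooth. Hence $w(z)=v(x)$ is smooth on $(-1,1)$ and all manipulations below are pointwise identities there.

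\textbf{Step 1: the derivative.} Since $\frac{dz}{dx}=\kappa\,\sech^2(\kappa x)=\kappa(1-z^2)$, the chain rule gives
\begin{align*}
\partial_x=-\kappa\,(z^2-1)\partial_z=:-\kappa D .
\end{align*}
Therefore $\partial_x^2=\kappa^2D^2$ and $\partial_x^3=-\kappa^3D^3$ as operators acting on $w$.

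\textbf{Step 2: the potential.} From \eqref{eq:QQ1}, $Q_c(x)^{p-1}=(c-1)\frac{p+1}{2}\,\sech^2\bigl(\tfrac{p-1}{2}\sqrt{\tfrac{c-1}{c}}\,x\bigr)$. The argument of $\sech$ here is exactly $\kappa x$ for the stated $\kappa$, so $\sech^2(\kappa x)=1-z^2$. Consequently
\begin{align*}
f'(Q_c)=pQ_c^{p-1}=\gamma(1-z^2), \qquad \gamma=(c-1)\tfrac{p(p+1)}{2}.
\end{align*}
For the choice $f(u)=|u|^p$ this is the same formula, because $Q_c>0$.

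\textbf{Step 3: substitution into \eqref{eq:v}.} I expand
\begin{align*}
L_c\partial_xv=-c\,\partial_x^3v+(c-1)\partial_xv-f'(Q_c)\partial_xv .
\end{align*}
Term by term:
\begin{itemize}[label={}]
\item $-c\,\partial_x^3 v$ becomes $c\kappa^3D^3w$;
\item $(c-1)\partial_xv$ becomes $-\kappa(c-1)Dw$;
\item $-f'(Q_c)\partial_xv=\gamma(1-z^2)\kappa Dw$ becomes $-\kappa\gamma(z^2-1)Dw$.
\end{itemize}
The two first-order terms combine to $-\kappa(z^2-1)\bigl(c-1+\gamma(z^2-1)\bigr)\partial_zw$.

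The right-hand side of \eqref{eq:v} is $\im\tau(1-\partial_x^2)v=\im\tau w-\im\tau\kappa^2D^2w$. Moving it to the left produces exactly \eqref{eq:w}.

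\textbf{Main obstacle.} There is no real obstacle; the computation is mechanical. The only points needing care are the following.
\begin{itemize}[label={}]
\item The signs coming from $\partial_x=-\kappa D$ and from the odd power in $\partial_x^3$.
\item Checking that the scaling factor in $Q_c$ matches $\kappa$, so that $Q_c^{p-1}$ is an exact multiple of $1-z^2$. This is what makes the coefficients polynomial in $z$ and makes \eqref{eq:w} Fuchsian.
\item Noting that $\gamma$ is redefined in this section and is not the quantity in \eqref{eq:gamma}.
\end{itemize}
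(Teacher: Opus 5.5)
Your proposal is correct and takes the same route as the paper: substitute $\partial_x=\kappa(1-z^2)\partial_z$, $pQ_c^{p-1}=\gamma(1-z^2)$ and $w(z)=v(x)$ into \eqref{eq:v}. Your bookkeeping with $(z^2-1)\partial_z$, the sign of the odd power, and the check that the scaling in $Q_c$ matches $\kappa$ are exactly the details the paper leaves to the reader, and they come out right.
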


\begin{proof}
    The conclusion \eqref{eq:w} follows from substituting  $\partial_x=\kappa(1-z^2)\partial_z$, $pQ_c^{p-1}(x)=\gamma(1-z^2)$ and $w(z)=v(x)$ into \eqref{eq:v}.
\end{proof}

We use the following consequence of the classical Frobenius theory
for regular singular points; see, e.g.,
\cite[Chapter IV, Sections 11--12]{HartmanBook}.
\begin{theorem}\label{thm:Fro}
    Let $N\ge 1$ and $\{a_j\}_{j=0}^N$ be holomorphic functions and $a_N\neq 0$ in a neighborhood of $0$.
    Assume $P(\rho):=\sum_{j=0}^N a_j(0)\rho^j$ has $N$ distinct roots $\rho_1,\cdots,\rho_N$ with $\Re \rho_1\ge \Re \rho_2\ge \cdots\ge \Re \rho_N$.
    Then, we can take $N$ independent solutions $\{u_j\}_{j=1}^N$
 of the ODE
 \begin{align}\label{eq:ODE_Fro}
     \sum_{j=0}^N a_j(z)\left(z\frac{d}{dz}\right)^j u=0,
 \end{align}
 of the form
 \begin{align}
     u_j (z)= z^{\rho_j}\sum_{k=1}^N (\log  z)^{k-1}   h_{j,k} (z)  ,
 \end{align}
 where $h_{j,k}$ are holomorphic in a neighborhood of $0$ with $h_{j,1}(0)=1$, $h_{j,k}(0)=0$ for $k\ge 2$ and $h_{1,k}\equiv 0$ for $k\ge 2$.
 \end{theorem}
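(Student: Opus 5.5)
The plan is to reduce Theorem \ref{thm:Fro} to the classical Frobenius theory for first-order systems with a simple pole. We rewrite \eqref{eq:ODE_Fro} as a system in the vector $U=(u,\theta u,\dots,\theta^{N-1}u)^T$, where $\theta:=z\frac{d}{dz}$. Dividing by $a_N$, which is admissible since $a_N(0)\neq0$, gives
\begin{equation*}
\theta U=\mathcal A(z)U .
\end{equation*}
Here $\mathcal A$ is holomorphic near $0$, and $\mathcal A(0)$ is the companion matrix whose characteristic polynomial is $P(\rho)/a_N(0)$. By hypothesis its eigenvalues $\rho_1,\dots,\rho_N$ are distinct, so $\mathcal A(0)$ is diagonalizable.

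\textbf{Step 1.} For $\rho_1$, whose real part is maximal, we look for a solution of the form $U=z^{\rho_1}\sum_{n\ge0}U_nz^n$. Substituting gives the recursion
\begin{equation*}
(\rho_1+n-\mathcal A(0))U_n=\sum_{k<n}\mathcal A_{n-k}U_k .
\end{equation*}
The matrix $\rho_1+n-\mathcal A(0)$ is invertible for $n\ge1$, since $\rho_1+n$ cannot be an eigenvalue: every eigenvalue has real part at most $\Re\rho_1<\Re\rho_1+n$. Convergence follows from the standard majorant argument, using $\|(\rho_1+n-\mathcal A(0))^{-1}\|=O(1/n)$. Normalizing so that the first component of $U_0$ equals $1$ gives $h_{1,1}(0)=1$ with no logarithms, so $h_{1,k}\equiv0$ for $k\ge2$.

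\textbf{Step 2.} For a general $j$, the same recursion with $\rho_j$ can only break down at resonances $n$ with $\rho_j+n=\rho_i$ for some $i$; since $n\ge1$, this forces $\Re\rho_i>\Re\rho_j$. To handle this uniformly we use the standard shearing reduction (Hartman, Ch.~IV). There is a holomorphic gauge transformation $U=T(z)V$, with $T$ a polynomial in $z$ times $z^{D}$ for a diagonal integer matrix $D$. It transforms the system into $\theta V=\mathcal B(z)V$, where $\mathcal B(0)$ has eigenvalues $\rho_i-n_i$ with $n_i\in\mathbb Z_{\ge0}$ and no two eigenvalues differing by a nonzero integer. After a further holomorphic change of variables, the system becomes $\theta W=\mathcal B(0)W$. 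Its fundamental solution $z^{\mathcal B(0)}$ involves powers of $\log z$ of degree less than the size of the Jordan blocks, hence at most $N-1$.

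Undoing the transformations, the $j$-th column of the fundamental matrix has first component $z^{\rho_j}\sum_{k=1}^N(\log z)^{k-1}h_{j,k}(z)$ with $h_{j,k}$ holomorphic. The shearing can be arranged to act trivially on the leading term of the $\rho_j$-eigenvector. This gives $h_{j,1}(0)=1$ after normalization, and $h_{j,k}(0)=0$ for $k\ge2$, because the logarithmic terms enter only through coupling with other exponents at positive integer shifts.

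\textbf{Step 3.} Linear independence follows from the invertibility of the fundamental matrix. It can also be checked directly: the leading behaviours $z^{\rho_j}(1+o(1))$ have distinct exponents, and a linear relation would force the coefficient of the extremal exponent to vanish.

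The main obstacle is the normalization claim in Step 2, namely $h_{j,k}(0)=0$ for $k\ge2$ with the leading coefficient exactly $1$. In the resonant case, logarithms are generated at order $z^{\rho_j+n}=z^{\rho_i}$, so every log term carries an extra factor $z^n$ with $n\ge1$. I would verify this directly from the Frobenius recursion with parameter $\rho$. Define $U(\rho,z)=z^\rho\sum_n U_n(\rho)z^n$, with $U_n(\rho)$ rational in $\rho$. Multiply by the product of $(\rho-\rho_j+\ldots)$ factors that clear the poles at the resonant integers, and differentiate in $\rho$ at $\rho=\rho_j$. Each derivative produces a $\log z$ factor multiplying only terms of order $z^{\ge n}$. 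This is the classical argument, and it is the one step needing care.
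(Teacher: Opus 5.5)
Your proposal is correct and is essentially the argument the paper relies on: the paper gives no proof of its own and defers to the classical Frobenius theory in Hartman (Chapter IV, Sections 11--12), which is what you reconstruct. Two points are worth tightening. The normalization step you flag is cleanest via the parameter method alone: if $r\le N-1$ is the number of integers $m\ge1$ with $P(\rho_j+m)=0$, then $c_0(\rho)=\prod_{m}P(\rho+m)$ over these $m$ vanishes to order exactly $r$ at $\rho_j$, and $P(\rho)c_0(\rho)$ vanishes to order $r+1$, because the roots are simple; so $\partial_\rho^r$ at $\rho=\rho_j$ gives a solution whose $(\log z)^l$-coefficients, $l\ge1$, vanish at $z=0$, while the log-free one equals $r!\prod_m P'(\rho_j+m)\neq0$ there. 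For independence, when several $\rho_j$ share the minimal real part, replace the ``extremal exponent'' argument by the fact that $\sum_j\alpha_j z^{\im t_j}\to0$ as $z\to0^+$ with distinct real $t_j$ forces all $\alpha_j=0$.
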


 The equation for $w$ has the form \eqref{eq:ODE_Fro} at $z=\pm 1 $ and $\infty$ and is regular elsewhere.
 We investigate the possible asymptotics of $w$ around these singularities.

 We set $z_\pm=z\mp 1$ and $z_\infty=z^{-1}$ and $w_\sharp(z_\sharp)=w(z)$ for $\sharp =\pm,\infty$.

 \begin{lemma}\label{lem:Fr_w}
     For $\sharp=\pm,\infty$, $w_\sharp$ satisfies
     \begin{align}
         \sum_{j=0}^3 a_{\sharp,j}(z_\sharp)\left(z_\sharp\frac{d}{dz_\sharp}\right)^jw_\sharp(z_\sharp)=0,
     \end{align}
     where 
     \begin{align*}
a_{\pm,3}(z)&=c\kappa^3(z\pm2)^3,
\\
a_{\pm,2}(z)&=\kappa^2(z \pm2)^2
\bigl(3c\kappa z +\im \tau\bigr),
\\
a_{\pm,1}(z)&=\kappa(z\pm2)
\Bigl[
(2c\kappa^2-\gamma)z^2
+\bigl(\pm2(c\kappa^2-\gamma)+\im \tau\kappa\bigr)z
-(c-1)
\Bigr],
\\
a_{\pm,0}(z)&=-\im \tau,
\end{align*}
and
\begin{align*}
a_{\infty,3}(s)&=c\kappa^3(1-s^2)^3,\\
\quad a_{\infty,2}(s)
&=-\kappa^2(1-s^2)^2
\bigl[3c\kappa(1+s^2)+\im \tau s\bigr],\\
a_{\infty,1}(s)
&=\kappa(1-s^2)\Bigl[
2c\kappa^2(1+s^4)
+\im 
\tau\kappa s(1+s^2)-(c-1)s^2-\gamma(1-s^2)\Bigr],\\
a_{\infty,0}(s)&=\im \tau s^3.
\end{align*}

 \end{lemma}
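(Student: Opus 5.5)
This lemma is a direct change of variables in \eqref{eq:w}. The only things to track are how $(z^2-1)\partial_z$ becomes an Euler operator $z_\sharp\partial_{z_\sharp}$ in each local variable, and how the coefficients rearrange. I will treat $\sharp=\pm$ first and then $\sharp=\infty$, and in each case expand the powers of the first-order operator $D:=(z^2-1)\partial_z$ in terms of $\theta_\sharp:=z_\sharp\frac{d}{dz_\sharp}$.

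\emph{Case $\sharp=\pm$.} With $z=z_\pm\pm1$ we have $z^2-1=z_\pm(z_\pm\pm2)$, so $D=(z_\pm\pm2)\theta$ with $\theta=z_\pm\partial_{z_\pm}$. Writing $g:=z_\pm\pm2$, the only commutator needed is $\theta g=g\theta+z_\pm$. From it,
\[
D^2=g^2\theta^2+gz_\pm\theta ,
\qquad
D^3=g^3\theta^3+3g^2z_\pm\theta^2+g\bigl(z_\pm^2+gz_\pm\bigr)\theta .
\]
I will substitute these into \eqref{eq:w}, together with
\[
(z^2-1)\bigl(c-1+\gamma(z^2-1)\bigr)\partial_z=\bigl(c-1+\gamma z_\pm g\bigr)g\,\theta .
\]
Collecting coefficients: $\theta^3$ carries $c\kappa^3g^3$. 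For $\theta^2$ we get $3c\kappa^3g^2z_\pm+\im\tau\kappa^2g^2=\kappa^2g^2(3c\kappa z_\pm+\im\tau)$. For $\theta$ we get
\[
\kappa g\Bigl[c\kappa^2(z_\pm^2+gz_\pm)+\im\tau\kappa z_\pm-(c-1)-\gamma z_\pm g\Bigr],
\]
and since $g=z_\pm\pm2$ this expands to the stated quadratic $(2c\kappa^2-\gamma)z_\pm^2+(\pm2(c\kappa^2-\gamma)+\im\tau\kappa)z_\pm-(c-1)$. The constant term is $-\im\tau$.

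\emph{Case $\sharp=\infty$.} With $s=z^{-1}$ we have $\partial_z=-s^2\partial_s$, so $D=-(1-s^2)\theta$ with $\theta=s\partial_s$. Now set $h:=1-s^2$; the commutator is $\theta h=h\theta-2s^2$. Then
\[
D^2=h^2\theta^2-2s^2h\theta ,
\]
and $D^3=-D\cdot D^2$ is expanded with the same commutator. The first-order term contributes $-\kappa(z^2-1)(\cdots)\partial_z=-\kappa\,\frac{h}{s^2}\bigl(c-1-\gamma\frac{h}{s^2}\bigr)\cdot(-\theta)$ with its sign tracked. This produces negative powers of $s$, so I will multiply the whole equation by $s^3$, or by whatever power clears the denominators, and take the normalisation in which $a_{\infty,0}=\im\tau s^3$. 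That choice shows exactly how the stated coefficients arise. The remaining work is to expand and check that the resulting $\theta^3,\theta^2,\theta$ coefficients match $a_{\infty,j}$.

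The only real obstacle is bookkeeping in the $\infty$ case, where the normalising power of $s$ must be chosen consistently across all four coefficients and the signs from $\partial_z=-s^2\partial_s$ tracked through the cube. I would verify the final coefficients at low order in $s$, and possibly with a symbolic check, before writing them down.
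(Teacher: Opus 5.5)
The case $\sharp=\pm$ is correct, and it is exactly the direct computation the paper has in mind. The identity $z^2-1=z_\pm(z_\pm\pm2)$, the commutator $\theta g=g\theta+z_\pm$, and your formulas for $D^2$, $D^3$ and the $\theta$-coefficient all check out and reproduce the $a_{\pm,j}$.

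The case $\sharp=\infty$ has a genuine error: it starts from a wrong identity. You correctly note $\partial_z=-s^2\partial_s$, but since $z^2-1=(1-s^2)s^{-2}$,
\[
D=(z^2-1)\partial_z=-(1-s^2)\partial_s=-s^{-1}(1-s^2)\,\theta ,
\]
not $-(1-s^2)\theta$. The factor $s^{-1}$ is exactly what the lemma encodes. Each power $D^j$ carries $s^{-j}$, so the equation must be multiplied by $-s^3$. That is why $a_{\infty,0}=\im\tau s^3$ while $a_{\infty,3}=c\kappa^3(1-s^2)^3$ carries no power of $s$.

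With your $D$, both $D^2$ and $D^3$ have coefficients analytic at $s=0$, and the zeroth-order term is $-\im\tau$. Imposing $a_{\infty,0}=\im\tau s^3$ would then give the leading coefficient $c\kappa^3s^3(1-s^2)^3$, which vanishes at $s=0$. So the coefficients cannot match, and the indicial polynomial $P_\infty$ would come out wrong; its roots $3+\frac2{p-1},0,-\frac2{p-1}$ are what feed into \eqref{eq:key_rel}. Your first-order term has the same defect, since $\partial_z=-s\theta$, not $-\theta$. It also has a sign error: $c-1+\gamma(z^2-1)=c-1+\gamma(1-s^2)s^{-2}$.

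To repair it, set $h=1-s^2$ and $D=-h\partial_s$, so that
\[
D^2=h^2\partial_s^2-2sh\partial_s,\qquad D^3=-h^3\partial_s^3+6sh^2\partial_s^2+2h(1-3s^2)\partial_s .
\]
Multiply the equation by $s^3$ and use $s^2\partial_s^2=\theta^2-\theta$ and $s^3\partial_s^3=\theta^3-3\theta^2+2\theta$. The first-order term becomes $\kappa h\bigl((c-1)s^2+\gamma h\bigr)\theta$. The $\theta$-coefficient coming from $c\kappa^3s^3D^3$ is $c\kappa^3h\bigl(-2h^2-6hs^2+2(1-3s^2)s^2\bigr)=-2c\kappa^3h(1+s^4)$. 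The $\theta^2$-coefficient is $3c\kappa^3h^2(1+s^2)+\im\tau\kappa^2h^2s$. After an overall change of sign, all four $a_{\infty,j}$ come out exactly as stated.
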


\begin{proof}
    The claim follows by direct computation.
\end{proof}

From Lemma \ref{lem:Fr_w}, we can apply Theorem \ref{thm:Fro} to $w_\sharp$.
We set 
\begin{align}
    P_\sharp(\rho):=\sum_{j=0}^3 a_{\sharp, j}(0)\rho^j.
\end{align}
The properties of the roots of $P_\sharp$ given in the following lemma are the key.

\begin{lemma}\label{lem:root_key}
    For $\sharp=\pm,\infty$, $P_\sharp$ has three distinct solutions $\rho_{\sharp,k}$, $k=1,2,3$ satisfying 
    \begin{align}\label{eq:rootsorder}
        \Re \rho_{\sharp,1}>0=\Re \rho_{\sharp,2}>\Re \rho_{\sharp,3}.
    \end{align}
    Moreover, we have
    \begin{align}
        \Re \rho_{+,1}+\Re \rho_{-,1}+\Re \rho_{\infty,3}>0.\label{eq:key_rel}
    \end{align}
\end{lemma}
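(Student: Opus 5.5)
The plan is to compute the three indicial polynomials explicitly from the values at $0$ of the coefficients in Lemma \ref{lem:Fr_w}. I then reduce the two finite ones, $P_\pm$, to the characteristic polynomial already analysed in Lemma \ref{lem:roots1}\textbf{a}, and compute the roots of $P_\infty$ in closed form.

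\textbf{Step 1: $P_\pm$ reduce to one cubic.} Evaluating at $0$ gives
\begin{align*}
P_\pm(\rho)=\pm 8c\kappa^3\rho^3+4\im\tau\kappa^2\rho^2\mp2\kappa(c-1)\rho-\im\tau .
\end{align*}
Setting $\mu=\pm2\kappa\rho$, both become the same cubic
\begin{align*}
q(\mu):=c\mu^3+\im\tau\mu^2-(c-1)\mu-\im\tau .
\end{align*}
This is just the characteristic equation of \eqref{eq:v} at $x=\pm\infty$, as expected from $z\mp1\sim \mp2e^{\mp2\kappa x}$. So the roots of $P_\pm$ are $\pm\mu_j/(2\kappa)$, where $\mu_j$ are the roots of $q$.

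To analyse $q$, I rescale $\mu=\gamma_0^{1/2}\nu$ with $\gamma_0:=(c-1)/c\in(0,1)$ and divide by $(c-1)\gamma_0^{1/2}$. Up to the sign convention for $z$, which I will track, this yields $P_{\gamma_0}(\nu,z)$ of \eqref{eq:mus} with $z$ purely imaginary and nonzero, proportional to $\tau$. By Lemma \ref{lem:roots1}\textbf{a}, with $k\in\R\setminus\{0\}$ determined by \eqref{eq:roots11} (the map $k\mapsto k(1+k^2)/(1+\gamma_0k^2)$ is an odd increasing bijection of $\R$), the three roots are:
\begin{itemize}
\item $\nu=\im k$;
\item $\nu=\pm\alpha_{\gamma_0}(k)-\im\beta$ with the common imaginary part $\beta=\frac{(1-\gamma_0)k}{2(1+\gamma_0k^2)}$.
\end{itemize}
Hence the real parts are exactly $-\alpha,0,\alpha$ with $\alpha=\gamma_0^{1/2}\alpha_{\gamma_0}(k)>0$. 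This gives three distinct roots of $P_\pm$ with real parts $\alpha/(2\kappa)>0=\ldots>-\alpha/(2\kappa)$, which is \eqref{eq:rootsorder} for $\sharp=\pm$. The sign flip between $P_+$ and $P_-$ only permutes the roots, so $\Re\rho_{+,1}=\Re\rho_{-,1}=\alpha/(2\kappa)$.

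\textbf{Step 2: $P_\infty$ in closed form.} Here
\begin{align*}
P_\infty(\rho)=\kappa\rho\bigl(c\kappa^2\rho^2-3c\kappa^2\rho+2c\kappa^2-\gamma\bigr).
\end{align*}
Using $\gamma=(c-1)p(p+1)/2$ and $\kappa^2=\frac{(p-1)^2}{4}\gamma_0$, one gets $\gamma/(c\kappa^2)=2p(p+1)/(p-1)^2$. The discriminant of the quadratic factor is then the perfect square $((3p+1)/(p-1))^2$. The roots are therefore
\begin{align*}
\rho_{\infty,1}=\frac{3p-1}{p-1},\qquad \rho_{\infty,2}=0,\qquad \rho_{\infty,3}=-\frac{2}{p-1}.
\end{align*}
These are real and distinct, which gives \eqref{eq:rootsorder} for $\sharp=\infty$.

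\textbf{Step 3: the key relation \eqref{eq:key_rel}.} By Steps 1 and 2, \eqref{eq:key_rel} reads $\alpha/\kappa>2/(p-1)$. Since $\kappa=\frac{p-1}{2}\gamma_0^{1/2}$, this is equivalent to $\alpha_{\gamma_0}(k)>1$. By \eqref{eq:alpha_gamma}, $\alpha_{\gamma_0}(k)^2>1$ is equivalent to
\begin{align*}
4(1-\gamma_0)k^2(1+\gamma_0k^2)>(1-\gamma_0)^2k^2 .
\end{align*}
Since $k\neq0$ (because $\tau\neq0$) and $\gamma_0\in(0,1)$, this reduces to $4(1+\gamma_0k^2)>1-\gamma_0$, which is obvious. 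Hence the inequality is strict.

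The main obstacle is bookkeeping rather than analysis. One must verify that the rescaling in Step 1 matches \eqref{eq:mus} exactly, including the sign of $z$ relative to $\tau$. One must also check that the two non-imaginary roots really have real parts exactly $\pm\alpha$, which holds because they share the same imaginary part. Finally, one must confirm the closed-form evaluation of $\gamma/(c\kappa^2)$ that makes the discriminant a perfect square.
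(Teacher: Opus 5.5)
Your proposal is correct and follows essentially the paper's route. The paper also writes the roots of $P_\pm$ explicitly, parametrized by the imaginary root $\mp\im\xi/(2\kappa)$ with $c\xi-\xi/(1+\xi^2)=\tau$; this is your $k=-\xi\gamma_0^{-1/2}$, so your $\alpha=\gamma_0^{1/2}\alpha_{\gamma_0}(k)$ is the paper's $\alpha$. It then lists $(\rho_{\infty,1},\rho_{\infty,2},\rho_{\infty,3})=(3+\frac{2}{p-1},0,-\frac{2}{p-1})$ and reduces \eqref{eq:key_rel} to $\alpha^2>(c-1)/c$, which is exactly your $\alpha_{\gamma_0}(k)>1$. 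The only difference is that you obtain the roots of $P_\pm$ by rescaling to the polynomial of Lemma \ref{lem:roots1}\textbf{a}, where the paper uses direct substitution. One harmless aside: since $(z\mp1)^{\rho}$ behaves like $e^{\mp2\kappa\rho x}$, your $q$ is the characteristic polynomial of \eqref{eq:v} at $\pm\infty$ only after the substitution $\mu\mapsto-\mu$.
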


\begin{proof}
Since $f(s):= cs-\frac{s}{1+s^2}$ is strictly increasing, $f(0)=0$ and $\lim_{s\to \pm\infty}f(s)=\pm\infty$,
there exists a unique $\xi\in\mathbb R\setminus\{0\}$ such that $f(\xi)=\tau$.
Set
\[
\beta=-\frac{\xi}{2c(1+\xi^2)},\qquad
\alpha=\left(1-\frac{1}{c(1+\xi^2)}-\beta^2\right)^{1/2}.
\]
Then
\begin{align}\label{eq:alpha_est}
\alpha^2-\frac{c-1}{c}
=\frac{\xi^2\bigl(4c(1+\xi^2)-1\bigr)}
       {4c^2(1+\xi^2)^2}>0.    
\end{align}
Direct substitution gives
\[
(\rho_{\pm,1},\rho_{\pm,2},\rho_{\pm,3})
=\frac1{2\kappa}
 \bigl(\alpha\mp \im \beta,\ \mp \im \xi,\ -\alpha\mp \im \beta\bigr),
\]
and
\[
(\rho_{\infty,1},\rho_{\infty,2},\rho_{\infty,3})
=\left(3+\frac2{p-1},\,0,\,-\frac2{p-1}\right).
\]
These roots satisfy \eqref{eq:rootsorder}.
Moreover, since
$2\kappa=(p-1)\sqrt{(c-1)/c}$, by \eqref{eq:alpha_est} we obtain
\begin{align*}  
\Re\rho_{+,1}+\Re\rho_{-,1}+\Re\rho_{\infty,3}
&=\frac{\alpha}{ 2\kappa}  + \frac{\alpha}{ 2\kappa} -\frac2{p-1}  
=\frac{\alpha-\sqrt{(c-1)/c}}{\kappa}>0.    
\end{align*}
This completes the proof.
\end{proof}
Recall that $v\in L^2$ and $w(z)=v(x)$ with $z=\tanh(\kappa x)$.
This implies
\begin{align}\label{eq:w_integrability}
    \int_{-1}^1|w(z)|^2\frac{dz}{1-z^2}=\kappa \int_{\R}|v(x)|^2\,dx<\infty.
\end{align}
  Theorem \ref{thm:Fro} and Lemmas \ref{lem:Fr_w} and \ref{lem:root_key} imply that near $z=\pm 1$, $w$ is a linear combination of  
  \begin{align*}
      w_{\pm,j}(z):=(z\mp 1)^{\rho_{\pm,j}}\sum_{k=1}^3 h_{\pm,j,k}(z\mp 1)(\log (z\mp 1))^{k-1}.
  \end{align*}
However, since $\Re \rho_{\pm,j}\le 0$ for $j=2,3$, we see that for $j=2,3$, $w_{\pm,j}$ do not satisfy the integrability condition \eqref{eq:w_integrability}.
Therefore  $$w(z)=c_\pm w_{\pm,1}(z)=c_\pm(z\mp 1)^{\rho_{\pm,1}}h_{\pm,1,1}(z\mp1),$$
 near $z=\pm 1$ respectively for some $c_\pm\in \C$.

\begin{lemma}\label{lem:f_entire}
    Set $f(z):=(z- 1)^{-\rho_{+,1}}(z+1)^{-\rho_{-,1}}w(z)$ initially defined on the neighborhood of the origin.
    Then, $f$ can be analytically extended as an entire function.
\end{lemma}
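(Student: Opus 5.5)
The plan is to use the fact that equation \eqref{eq:w} is a linear ODE with polynomial coefficients. By Lemma \ref{lem:Fr_w} its only singular points in the Riemann sphere are $z=\pm1$ and $z=\infty$. Indeed, the leading coefficient of \eqref{eq:w}, once expanded in powers of $\partial_z$, is $c\kappa^3(z^2-1)^3$, which vanishes in $\C$ only at $z=\pm1$.

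First, $w$ extends from a neighborhood of the origin to a holomorphic function on every simply connected domain avoiding $\pm1$. This is the standard continuation property of solutions of linear ODEs with holomorphic coefficients and nonvanishing leading coefficient. On the segment $(-1,1)$ this continuation coincides with $w(z)=v(\tanh^{-1}(z)/\kappa)$. Hence $f(z)=(z-1)^{-\rho_{+,1}}(z+1)^{-\rho_{-,1}}w(z)$ is holomorphic on, say, $\C\setminus\big((-\infty,-1]\cup[1,+\infty)\big)$, using the principal branches of the powers.

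The key step is to show that $f$ is single valued and extends holomorphically across $z=\pm1$. Near $z=1$ we have already shown, from the integrability condition \eqref{eq:w_integrability} together with Theorem \ref{thm:Fro} and Lemma \ref{lem:root_key}, that $w(z)=c_+(z-1)^{\rho_{+,1}}h_{+,1,1}(z-1)$ on the side $z<1$. Here $h_{+,1,1}$ is holomorphic in a full disc around $0$ and carries no logarithmic terms, because $h_{1,k}\equiv0$ for $k\ge2$ in Theorem \ref{thm:Fro}. The right-hand side is a solution of \eqref{eq:w} on a slit disc around $1$, and it agrees with $w$ on an open set. By uniqueness of analytic continuation of solutions, the two coincide on the whole slit disc. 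There $f(z)=c_+(z+1)^{-\rho_{-,1}}h_{+,1,1}(z-1)$, which is holomorphic in the full disc around $1$. The same argument at $z=-1$ gives $f(z)=c_-(z-1)^{-\rho_{+,1}}h_{-,1,1}(z+1)$ near $-1$.

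The main obstacle is to patch these local statements into a single-valued global function. I would argue with monodromy. For any loop in $\C\setminus\{\pm1\}$ based near $0$, write it as a product of loops around $1$ and around $-1$. Continuation of $w$ around $1$ multiplies $w$ by $e^{2\pi\im\rho_{+,1}}$ and leaves $(z+1)^{-\rho_{-,1}}$ unchanged, while $(z-1)^{-\rho_{+,1}}$ picks up the inverse factor. So $f$ has trivial monodromy around $1$, and likewise around $-1$. Hence $f$ continues to a single-valued holomorphic function on $\C\setminus\{\pm1\}$. By the local representations above it is bounded near $\pm1$, so Riemann's removable singularity theorem makes $f$ entire.

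The care needed is only in checking that the local Frobenius representation is indeed valid on a full punctured neighborhood, including the sheet reached from the other side. This follows because $(z-1)^{\rho_{+,1}}h_{+,1,1}$ is itself a multivalued solution with a known monodromy.
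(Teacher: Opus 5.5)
Your argument is correct and follows the same route as the paper: you continue $w$, hence $f$, along paths in $\C\setminus\{\pm1\}$, use the log-free Frobenius representation $w=c_\pm(z\mp1)^{\rho_{\pm,1}}h_{\pm,1,1}(z\mp1)$ to see that $f$ is holomorphic near $\pm1$, and conclude. Your monodromy paragraph just makes explicit the single-valuedness that the paper leaves implicit. One slip: the principal branch of $(z-1)^{-\rho_{+,1}}$ has its cut along $(-\infty,1]$, which passes through the origin, so you should instead take the branch of $\log(z-1)$ cut along $[1,+\infty)$.
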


\begin{proof}
Since Equation \eqref{eq:w} has singularities only at  $\pm1$, its solutions admit analytic continuation along
every path in $\mathbb C\setminus\{-1,1\}$, see \cite[Chapter 16, pp.~323--327]{rudin}. The same holds for $f$. Since   $f=c_\pm(z\pm 1)^{-\rho_{\mp,1}}h_{\pm,1,1}(z\mp1)$ near $\pm 1$,  for $h_{\pm,1,1}(z)$  holomorphic for $z$  near 0,  $f$ extends holomorphically
in a neighborhood of 
$\pm 1$. Then $f$ extends holomorphically to the whole $\C$. 
\end{proof}

\begin{proof}[Proof of Proposition \ref{prop:noemb_all}]
By Theorem \ref{thm:Fro} and Lemmas \ref{lem:Fr_w} and \ref{lem:root_key} for $\sharp=\infty$, we have
\begin{align}
    |w(z)|\lesssim |z|^{-\Re \rho_{\infty,3}}(\log |z|)^2,
\end{align}
for $|z|\gg 1$.
Thus, for the limit using \eqref{eq:key_rel},     we have
\begin{align}
    |f(z)|\lesssim |z|^{-\Re(\rho_{+,1}+\rho_{-,1}+\rho_{\infty,3})}(\log|z|)^2\xrightarrow{z\to  \infty} 0   .
\end{align} 
Since $f$ is an entire function, by Liouville's theorem   $f\equiv 0$.
 Then    $v\equiv 0$ and we have the conclusion.
\end{proof}

\section*{Acknowledgments}
M.  was supported by the JSPS KAKENHI Grant Number 23H01079 and 24K06792.

\section*{Statement of the use of AI}
The authors used Chat GPT  exclusively for editorial assistance and proofreading.
  All suggestions were reviewed
by the authors, who take full responsibility for the final manuscript.

	\begin{footnotesize}
		\bibliography{references}
		\bibliographystyle{plain}
	\end{footnotesize}

Department of Mathematics, Informatics and Geosciences,  University
of Trieste, via Valerio  12/1  Trieste, 34127  Italy.
{\it E-mail Address}: {\tt scuccagna@units.it, delsanto@units.it }

Department of Mathematics and Informatics,
Graduate School of Science,
Chiba University,
Chiba 263-8522, Japan.
{\it E-mail Address}: {\tt maeda@math.s.chiba-u.ac.jp}

\end{document}